\def\BibTeX{{\rm B\kern-.05em{\sc i\kern-.025em b}\kern-.08em
    T\kern-.1667em\lower.7ex\hbox{E}\kern-.125emX}}
\newtheorem{corollary}{Corollary}[section]
\newtheorem{definition}{Definition}[section]
\newtheorem{lemma}{Lemma}[section]
\newtheorem{theorem}{Theorem}[section]
\newenvironment{thmbis}[1]
 {%
 \addtocounter{theorem}{-1}%
 \begin{theorem}}
 {\end{theorem}}
\newtheorem{proposition}{Proposition}[section]
\newtheorem{example}{Example}[section]
\newtheorem{construction}{Construction}[section]
\newtheorem{fact}{Fact}[section]
\newtheorem{remark}{Remark}[section]
\begin{document}

\title{Swap-Robust and Almost Supermagic Complete Graphs for Dynamical Distributed Storage
}

\author{Xin~Wei,~Xiande~Zhang,~and~Gennian~Ge
        \thanks{X. Wei ({\tt weixinma@mail.ustc.edu.cn}) is with the School of Mathematical Sciences, University of Science and Technology of China, Hefei, 230026, Anhui, China.}

\thanks{X. Zhang ({\tt drzhangx@ustc.edu.cn}) is with the School of Mathematical Sciences,
University of Science and Technology of China, Hefei, 230026, and with Hefei National Laboratory, University of Science and Technology of China, Hefei 230088, China.  The research of X. Zhang is supported by the NSFC
under Grants No. 12171452 and No. 12231014, the Innovation Program for Quantum Science and Technology
(2021ZD0302902) and the National Key Research and
Development Program of China (2020YFA0713100).}

\thanks{G. Ge ({\tt gnge@zju.edu.cn}) is with the School of Mathematical Sciences, Capital Normal University, Beijing, 100048, China. The research of G. Ge was supported by the National Key Research and Development Program of China under Grant 2020YFA0712100 and Grant 2018YFA0704703, the National Natural Science Foundation of China under Grant 11971325 and Grant 12231014, and Beijing Scholars Program.
}
}
%

%

\maketitle

\begin{abstract}
To prevent service time bottlenecks in distributed storage systems, the access balancing problem has been studied by designing almost supermagic edge labelings of certain graphs to balance the access requests to different servers. In this paper, we introduce the concept of  \emph{robustness} of edge labelings under limited-magnitude swaps, which is important for studying the \emph{dynamical} access balancing problem  with respect
to changes in data popularity. We provide upper and lower bounds on the robustness ratio for complete graphs with $n$ vertices, and construct $O(n)$-almost
 supermagic labelings that are asymptotically optimal in terms of the robustness ratio.
\end{abstract}

\begin{IEEEkeywords}
Distributed storage, access balance, fractional repetition code, supermagic labeling
\end{IEEEkeywords}

\section{Introduction}
In distributed data storage systems (DSS), data is stored
across a network of nodes. The existing DSS classifies data into hot,
warm and cold objects based on their popularity~\cite{Cherkasova2004,rawat2016locality,joshi2014delay}, and use different
mechanisms to store them.
An important problem in this regime is access balancing, which aims to balance the access requests to different nodes by using data chunk popularity information. Too many access requests on a single node may lead to service time bottlenecks \cite{dean2009challenges}, so it is necessary to balance the popularity of the data when designing the storage system.

Recently in \cite{maxminsum2018},  Dau and Milenkovic proposed the access balancing problem on fractional repetition distributed storage coding \cite{FRCsource2010}. They introduced the MaxMin model
 to maximize the minimum overall popularity of items stored on each server
and thereby control the discrepancy of access requests \cite{chee2020access,Colbourn2021,colbourn2021egalitarian}. These systems are constructed
by selecting a certain combinatorial design, and associating data
items of given popularities with elements of the
design. Later, Yu et al. \cite{Yu2021} introduced another combinatorial model, called MinVar
model for fractional repetition (FR) codes based on graphs, to minimize the variance among the  overall popularities of different servers. In their model, an FR code for DSS is designed from a specific graph, and  data
items of given popularities are associated with edges of the graph.  Since complete graphs, Tur\'an graphs and graphs with large girth were used to construct optimal FR codes in term of file size \cite{FRCsource2010,Turan_Ski2015}, Yu et al. studied an edge labeling problem for such graphs to balance the access requests of these systems \cite{Yu2021}. It was shown that the edge labeling problem for access balancing is closely related to the supermagic labeling problem in graph theory \cite{ho2001some,stewart_1967,supermagic1997}, which requires that the label sum over edges incident to each vertex is a constant.

The above access-balanced storage systems did not take into account the dynamic changes in data demands, that is, the popularities of data items may change with the time.  In the system with dynamically changing item
popularities, undesirable significantly decreasing or increasing of overall popularities may happen to some server and lead to new service time bottlenecks. To solve this problem, one can redistribute data items in the storage system thoroughly to achieve a rebalancing status. For example, Pan et al.~\cite{trade2022} applied a combinatorial structure, called  balanced trade~\cite{trade1990}, to redistribute data items without altering the counts of certain subcollections of items stored on the same server. Since redistribution of data needs lots of resources, Pan et al. \cite{trade2022} studied balanced trades that are as robust as possible to limited-magnitude
swaps in popularities of items to reduce the frequency of  replacements. They called such a trade swap-robust trade,  which is important for studying the stability of server access frequencies with respect
to changes in data popularity.



Motivated by the work of Dau et al. \cite{maxminsum2018} and Pan et al. \cite{trade2022}, we study the access balancing problem for fractional repetition distributed storage coding with dynamically changing item
popularities.  Roughly speaking, we consider the labelings on data items which have good balancing properties and still have good balancing properties after some limited-magnitude swaps. The mathematical problem is formulated as follows.

An FR code can be presented in the language of a hypergraph $G$, that is, each vertex represents a server, and each hyperedge represents a data item. Then hyperedges incident to a vertex means that the copies of the corresponding data items are stored on this server. Popularities of data items are quantified as labels of hyperedges, which
are directly proportional to their popularities and hence their
access frequencies. It has been shown that the access frequencies of data chunks experimentally obey Zipf law \cite{breslau1999web}, where
the $i$-th most popular chunk has access frequency $1/i^\beta$
for some $\beta > 0$. To simplify our model, we assume that the
labels of hyperedges are the ranks of the data items as in \cite{maxminsum2018} and \cite{Yu2021}, that is, $\{1, 2, \ldots, |E|\}$, where $E$ is the set of hyperedges. Then a distribution of data items among all servers is an edge labeling $t$ of $G$, which is a bijection from $E$ to $\{1, 2, \ldots, |E|\}$.

For the access balancing requirement, we need the sums of labels of hyperedges incident to each vertex are as balanced as possible. If the discrepancy of label sums is at most $\alpha$, then we call the labeling $t$ an \emph{$\alpha$-almost supermagic labeling}. If $\alpha=0$, it reduces to the well-known {super}magic labeling \cite{supermagic1997}, which provides a strongly balanced status.
When popularities of data items are changed by limited-magnitude swaps as in~\cite{trade2022}, we use a swap $\theta$ mapping $t$ to another labeling $t'$ to present it, such that $|t(e)-t'(e)|$ is upper bounded by a magnitude say $p$ for all edges $e$. We call such swaps as \emph{$p$-swaps}. The labeling $t$ is \emph{robust} if $t'$ is an $\alpha'$-almost supermagic labeling for a small integer $\alpha'$ after any $p$-swaps.

Considering the complexity of this problem, we assume that $G$ is a complete graph $K_n$. An FR code constructed from a complete graph is the first kind of optimal FR codes in term of file size \cite{FRCsource2010,Turan_Ski2015}. Moreover, an FR code from a complete graph is also a  minimum-bandwidth regenerating (MBR) code, which achieves one end of the well-known storage-repair bandwidth tradeoff (for example, \cite{2018survey}). So we work on this special kind of FR codes, but  the model can be applied to any FR codes.

Under these assumptions, for a given edge labeling $t$ of $K_n$, we use $R(p,t,n)$ to denote the maximum value of $\alpha'$ mentioned above among all $p$-swaps. A lower value of $R(p,t,n)$ means a better robustness of $t$. So it is very important to construct an $\alpha$-almost supermagic labeling of $K_n$ which has a smaller value of  $R(p,t,n)$ for any $p$.
Since popularities of data items, or equivalently labels of edges, are changed  dynamically with time, the magnitude $p$ will accumulate with time. In order to have a reasonable balanced access requests along with time,  we assume that $p=o(n)$ and $p$ goes to infinity as $n$.  Further, we assume that $\alpha=O(n)$, since all known {super}magic labelings or near {super}magic labelings ($\alpha=0$ or $1$) are not robust (see Section~\ref{sec-old_constructions_I} and Appendix~\ref{sec-old_constructions}). Under these assumptions, we show that for any $\alpha$-almost supermagic labeling $t$ on $K_n$,
 \begin{equation} \label{eqR}
pn+o(pn)<R(p, t, n)<2pn+o(pn).
\end{equation}

To measure the robustness of $t$ asymptotically, we introduce the \emph{$p$-robustness ratio of $t$} as $r(p, t)= \varlimsup_{n\to\infty}{R(p, t, n)}/{2pn}$, which satisfies $\frac 1 2\le r(p, t)\le 1$ by Eq. (\ref{eqR}). We will prove that all known {super}magic labelings or near {super}magic labelings ($\alpha=0$ or $1$) in \cite{Colbourn2021,stewart_1967} are not robust asymptotically, that is, the $p$-robustness ratios are all close to $1$.

To construct $\alpha$-almost supermagic labelings on $K_n$ with $p$-robustness ratio closing to the lower bound $1\slash2$, we introduce some new structures, such as weaving squares and $b$-astray good labelings, to help us to construct good labelings. We construct a labeling $T_{8q}$ on $K_{8q}$ with $p$-robustness ratio $3/4$ and good balanced structure. Using the labeling $T_{8q}$ as a starting point, we can recursively prove the following main result in this paper.

\vspace{0.2cm}
\noindent\textbf{Main Result:} For any real number $\varepsilon>0$, there exists a $7n$-almost supermagic labeling construction $t$ for any large enough $n$ such that $r(p, t)\le \frac 1 2+\varepsilon$ for any $p=p(n)$ satisfying $p=o(n)$ and $\lim_{n\to\infty}p=\infty$.
\vspace{0.2cm}
\subsection{Our Contributions}
\begin{itemize}
\item The problem of access balancing fractional repetition distributed storage coding with dynamically changing popularities of items is proposed. To the best of our knowledge, this is the first time for this problem to be raised and  considered systematically.
\item For simplicity, we only consider the case when FR codes are generated by complete graphs and labels are the ranks of data items. Thus the problem is modeled to find an almost supermagic labeling of complete graphs which is robust under $p$-swaps.
For any supermagic labeling $t$ on $K_n$, we give upper and lower bounds for the maximum discrepancy over all $p$-swaps of $t$, i.e., $R(p, t, n)$, in a wide range of parameters.
\item We show that when $n$ goes to infinity, the $p$-robustness ratio of $t$, i.e., $r(p, t)$, is always in the range $[1\slash2, 1]$. Some criteria for bad $p$-robustness ratios are given.
 From these criteria, we prove that all known supermagic or almost supermagic labelings in \cite{Colbourn2021,stewart_1967} have the worst $p$-robustness ratios 1.
\item The most important result (our \textbf{Main Result}) is a recursive construction of a $7n$-almost supermagic labeling with $p$-robustness ratio reaching the lower bound $1\slash 2+\epsilon$ for any $\epsilon>0$.
 During the constructive proof, a  $\frac12n$-almost supermagic labeling $T_{8q}$ over $K_{8q}$ with robustness ratio $3\slash4$ is constructed directly. Three recursive constructions are established.
\item Some partial results for swap-robust and almost supermagic labelings on Tur\'{a}n graphs are also given.
\end{itemize}
\subsection{Organization}
The rest of this paper is structured as follows. In Section~\ref{Sec-preliminaries}, we formally define all necessary notations and state the mathematical problem in detail.  Section~\ref{sec-overview} gives an overview of the paper, where important theorems with sketches of proofs, and main constructions with ideas are involved. Section~\ref{sec-lower_bound} serves to give upper and lower bounds of  $R(p,t,n)$ for general labeling $t$. In Section~\ref{sec-old_constructions_I}, we develop some methods to estimate the $p$-robustness ratio $r(p, t)$ for labelings satisfying certain conditions. Based on these methods, we show that the $p$-robustness ratios of all known {super}magic labelings or near {super}magic labelings in \cite{Colbourn2021,stewart_1967} are close to 1 in Appendix~\ref{sec-old_constructions}. Sections~\ref{sec-main_construction} and~\ref{sec-main_result} are devoted to construct $\alpha$-almost supermagic labelings on $K_n$ with $p$-robustness ratio closing to the lower bound $1/2$. The base labeling $T_{8q}$ is constructed in Section~\ref{sec-main_construction}, while the recursive method is applied in Section~\ref{sec-main_result} for giving a proof to the \textbf{Main Result}. Finally, we conclude our results in Section~\ref{conc.}, where some open problems are also listed.

\section{Problem Statement and Preliminaries}\label{Sec-preliminaries}
We start by defining some basic concepts from graph theory. For other terminologies not mentioned here, we refer the readers to \cite{diestel2005graph,2018survey}.
Let $[a, b]$ denote the set $\{a, a+1, \ldots, b\}$ for any integers $a\le b$, and $[a]\triangleq[1, a]$ for short.



Given a graph $G=(V, E)$ and a vertex $v\in V$, {let $D_G(v)$ be the set of all edges in $E$ containing $v$. The size of $D_G(v)$ is called the degree of $v$ in $G$, denoted by $d_G(v)$. If there is no confusion, we omit the subscript $G$ to write $D(v)$ and $d(v)$ instead.}
A $\delta$-factor of $G$ is a spanning subgraph of $G$ such that all vertices have degree $\delta$. For example, a perfect matching is a $1$-factor, and a Hamiltonian cycle is a $2$-factor. A set of subgraphs $G_1,G_2,\ldots, G_b$ is called a decomposition of $G$, if $E(G_i), i\in [b]$ form a partition of $E(G)$. If each $G_i$ is a $\delta$-factor, then we call it a $\delta$-factor decomposition of $G$.

We consider edge labelings of a graph $G=(V, E)$, which are bijections from $E$ to $[|E|]$.
Given a labeling $t$ and a set $F\subset E$,  the {\it label set} of $F$ is  the set of all values of edges in $F$ under $t$, denoted by $S(t, F)$, and  the {\it label sum} of $F$  is $s(t, F)= \sum_{e\in F}t(e)$. If $F=D(v)$ for some vertex $v$, then we say they are label set and label sum of $v$ instead, and write  $S(t, v)$ and $s(t, v)$ for simplicity. If for any two vertices $u$ and $v$ in $V$, $|s(t,u)-s(t,v)|\le \alpha$ for some integer $\alpha\ge 0$, then we say $t$ is $\alpha$-almost supermagic on $G$. If $\alpha=0$, this reduces to the well-known supermagic labeling in graph theory \cite{stewart_1967}. A graph is called $\alpha$-almost supermagic if there exists an $\alpha$-almost supermagic labeling on it.

Let $K_n$ denote a complete graph on $n$ vertices, and let $K_{m_1, m_2, \ldots, m_r}$ denote a  complete $r$-partite graph whose parts have sizes $m_1, m_2, \ldots, m_r$. If each $m_i=m$, then we write it as $K_{r[m]}$ for short. A Tur\'an graph $T(r, n)$ is an $n$-vertex complete $r$-partite graph such that all parts are of sizes either $\lfloor\frac n r\rfloor$ or $\lceil\frac n r\rceil$.  The union of $m\ge 1$ vertex disjoint copies of a graph $G$ is denoted by $mG$. Throughout this paper, we always let $\epsilon_n:=\binom{n}{2}$ denote the number of edges of $K_n$, and omit the subscript if there is no confusion.


\begin{theorem}\label{1997_structure_theorem}{\rm \cite{stewart_1967, Colbourn2021, supermagic1997}} 
\begin{itemize}
\item[(i)] For all $n\ge 6$, $K_n$ is $1$-almost supermagic. It is further supermagic if and only if $n\not\equiv 0 \mod 4$.
\item[(ii)] The graph $mK_{k[2]}$ is supermagic if and only if $k\ge 3$ and $m\ge 1$.
\item[(iii)] The graph $mK_{k[n]}$ with $n\ge 3$ is supermagic if and only if $k\geq 2$ and one of the following cases is satisfied:
\begin{itemize}
\item[(1)] $n\equiv 0\mod 2$, $m\ge 1;$
\item[(2)] $n\equiv 1\mod 2$, $k\equiv 1\mod 4$, $m\ge 1;$
\item[(3)] $n\equiv 1\mod 2$, $k\equiv 2,3\mod 4$, $m\equiv 1\mod 2$.
\end{itemize}
\end{itemize}
\end{theorem}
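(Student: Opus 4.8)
The statement collects results from the three cited papers, so the plan is to separate it into an elementary \emph{necessity} half, which I would prove from scratch by double counting, and a \emph{constructive sufficiency} half, for which I would import the explicit labelings of \cite{stewart_1967,Colbourn2021,supermagic1997}, handling the complete graph in (i) and the complete multipartite families in (ii)--(iii) in parallel. For necessity: if $t$ is supermagic with constant $h$ on a $d$-regular graph with $N$ vertices and $|E|=Nd/2$ edges, then summing $s(t,v)$ over all vertices counts each edge twice, so $Nh=|E|(|E|+1)$, whence $h=\tfrac{d}{2}(|E|+1)$ must be an integer. For $K_n$ this is $h=\tfrac{(n-1)}{4}(n^2-n+2)$, and checking the four residues of $n$ modulo $4$ shows $h\in\mathbb{Z}$ exactly when $n\not\equiv 0\pmod 4$; hence $K_n$ with $n\equiv 0\pmod 4$ is never supermagic and $\alpha=1$ is the best possible. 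For $mK_{k[n]}$ the same identity gives $h=\tfrac{(k-1)n}{2}(|E|+1)$ with $|E|=\tfrac12 mk(k-1)n^2$, and here two integrality conditions are in play: that $h$ itself be an integer, and that the within-copy label total $\tfrac{knh}{2}$ (all $kn$ vertices of a single copy having label sum $h$) be an integer. Writing $c=\tfrac12 k(k-1)n^2$ so that $|E|=mc$, these conditions unwind precisely to the residue restrictions in (ii) and (iii) — for instance $k\equiv 0\pmod4$ with $n$ odd is killed because then $|E|$ is even and $h\notin\mathbb{Z}$, while $k\equiv 2,3\pmod4$ with $n$ odd forces $m$ odd so that $c(mc+1)$ stays even — and the only residual small case is $K_{2[2]}=C_4$, for which adjacent edges would need equal labels.

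For sufficiency I would build each labeling from a factor decomposition of the host graph together with a coordinated labeling of the factors. When the regular degree is even, decompose into $2$-factors that are disjoint unions of cycles (for $K_n$ with $n$ odd, the classical decomposition into $(n-1)/2$ Hamiltonian cycles; for $mK_{k[n]}$, a standard $2$-factorization into cycles); when a $1$-factor is forced (for $K_n$ with $n$ even, and for $mK_{k[n]}$ with $k$ even and $n$ odd) peel one off first. Assign the factors consecutive blocks of the interval $[\,|E|\,]$ and then permute the labels within each block — around each cycle, and along the matching — so that the \emph{total} label sum $s(t,v)$ is the same constant $h$ for every vertex; a lone $1$-factor can never be balanced on its own, since the two endpoints of a matching edge always see the same label, so its unavoidable imbalance must be absorbed by a deliberately skewed labeling of one of the $2$-factors. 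When $n\equiv 0\pmod 4$ the necessity computation shows that no exact cancellation exists, so instead one arranges the residual labels so that every vertex sees one of two consecutive values, giving the $1$-almost supermagic labeling of (i). The multipartite families (ii)--(iii) are assembled in the same spirit, the residue conditions from the necessity step being exactly what lets the relevant block sizes and cycle lengths admit perfectly balanced assignments, with an odd number $m$ of copies supplying the slack needed to cancel a parity defect when $c$ is odd.

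The only genuinely hard part is this construction side: exhibiting the explicit balanced labelings of the individual $1$- and $2$-factors and verifying that their contributions telescope to $h$ (or to two consecutive values in the $n\equiv 0\pmod 4$ case). Because the label set is forced to be the entire interval $[\,|E|\,]$, the block handed to each factor is essentially rigid and there is very little freedom, so matching every parity obstruction with a construction that actually meets it is where the cited works concentrate their effort; I would therefore quote those constructions directly, using the necessity computation above only as an independent check that the listed conditions are sharp.
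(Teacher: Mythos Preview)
The paper does not prove this theorem at all: it is stated with the citations \cite{stewart_1967,Colbourn2021,supermagic1997} and used as background, with no accompanying proof. So there is no ``paper's own proof'' to compare your proposal against; your plan to derive the necessity half by double counting and to import the explicit constructions from the cited references is exactly how the paper treats the result, only more explicit on the necessity side.

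Two small remarks on your sketch. First, in the $C_4$ case your conclusion is right but the reason is slightly off: from $t(e_1)+t(e_4)=h=t(e_1)+t(e_2)$ one gets $t(e_2)=t(e_4)$, and $e_2,e_4$ are the \emph{opposite} (non-adjacent) edges of the $4$-cycle, not adjacent ones. This argument applies verbatim to every $C_4$ component, which is what you need to exclude $mK_{2[2]}$ for all $m\ge 1$, not just $m=1$. Second, your necessity analysis for (iii) is on the right track, but be aware that the two integrality constraints you name ($h\in\mathbb Z$ and $knh/2\in\mathbb Z$) do not by themselves exclude every non-listed case; the full ``only if'' direction in \cite{supermagic1997} uses a little more than these two counts, so if you intend a self-contained necessity proof you should check each excluded residue class individually rather than assert that the two conditions ``unwind precisely'' to the listed restrictions.
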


It is shown that an $\alpha$-almost supermagic labeling of a Tur\'an graph gives an FR code with good access balancing property in DSS \cite{maxminsum2018, Turan_Ski2015}, where the labels of edges stand for the popularities of data chunks.  Considering the dynamical DSS where popularities may change with time, we introduce the following concept of robustness of an edge labeling.

For an $\alpha$-almost supermagic labeling $t$ on a graph $G=(V, E)$, a swap $\theta$  maps $t$ to another edge labeling of $G$. Here, the edge labels under $t$ measure the original popularities of chunks, and those under $\theta t$ measure the new popularities after the changes with time.  We say a swap $\theta$ is of {\it magnitude} $p$, or \emph{$p$-swap} for short, if for any $e\in E$, $|t(e)-\theta t(e)|\le p$. We care about the value $\alpha'$ such that $\theta t$ is an $\alpha'$-almost supermagic labeling on $G$.
The \emph{$p$-robustness} of  $t$ on $G$, denoted by $R(p, t, G)$, is  the largest $\alpha'$  among all $p$-swaps $\theta$ such that $\theta t$ is $\alpha'$-almost supermagic, that is,
$$R(p, t, G)=\max_{p\text{-swap }\theta}\{\max_{u\neq v}|s(\theta t, u)-s(\theta t, v)|\}.$$
For a given graph $G$ and a magnitude $p$, a smaller value of $R(p, t, G)$ means a better $p$-robustness of the labeling $t$.
We write $R(p, t, n)$ instead of $R(p, t, K_n)$ for short.

Based on the above definitions, it is interesting to construct supermagic labelings with a small $p$-robustness. However,  supermagic labelings  are rare for complete graphs, and all known supermagic labelings in \cite{stewart_1967, Colbourn2021 } are not good regarding to the robustness (see Section~\ref{sec-old_constructions_I} and Appendix~\ref{sec-old_constructions}). So we consider $\alpha$-almost supermagic labelings with $\alpha>0$. Since we only consider the complete graph which has $\Omega(n^2)$ edges, then for any edge labeling $t$, the average label sum of each single vertex reaches $\Theta (n^3)$. So it is reasonable to say that an  $\alpha$-almost supermagic labeling with $\alpha=O(n)$ provides an FR code with tolerable good access balancing property. In the sections that follow, we always assume that $\alpha=O(n)$.

\section{Overview}\label{sec-overview}

In this section, we give an overview of the whole work. The numbering of all theorems here is the same as their original numbering in the following sections.

\noindent \textbf{The upper and lower bounds of robustness.}
For any given $\alpha$-almost supermagic labeling on $K_n$ and magnitude $p$, we give the following bounds of robustness under any $p$-swap.

\begin{thmbis}{bou}
  For any $\alpha$-almost supermagic labeling $t$ on $K_n$,
\[(2n-4)p+\alpha\ge R(p, t, n) \ge (n-2p-19)p-\alpha.\]
\end{thmbis}

\begin{proof}[Sketch of proof of Theorem~\ref{bou}]
The upper bound is simply from that $|D(u)\triangle D(v)|=2n-4$, where $\triangle$ stands for the symmetric difference. For the lower bound, the idea is to directly construct a $p$-swap $\theta=\theta(t)$ such that there exist two different vertices $u$ and $v$ satisfying $|s(\theta t, u)- s(\theta t, v)|\ge (n-2p-19)p-\alpha.$

A naive attempt is to fix two arbitrary vertices $u\ne v$ and then greedily define a mapping $\theta$ like this,
\[
\theta t(e)=
\begin{cases}
t(e)+p, & e\in D(u)\backslash\{uv\}\text{ or }t(e)+p\in S(t, D(v)\backslash \{uv\});\\
t(e)-p, & e\in D(v)\backslash\{uv\}\text{ or }t(e)-p\in S(t, D(u)\backslash \{uv\});\\
t(e), & \text{otherwise.}
\end{cases}
\] That is, increase labels of edges incident to $u$, decrease labels of those incident to $v$, and change labels of those that are in direct conflict with this modification.
However, simply doing this will not yield a $p$-swap $\theta$. For example, if there exist two edges $e_1\in D(u), e_2\in D(v)$ that are not $uv$ and $t(e_1)+2p=t(e_2)$, then by the definition, $\theta t(e_1)=\theta t(e_2)=t(e_1)+p$, contradicting to $\theta$ being a swap. Another example is when there exist two edges $e_1, e_2$ in $D(u)$ such that $t(e_1)+p=t(e_2)$, and another edge $e'$ with  $t(e')=t(e_2)+p$. Then by definition,  $\theta t(e_1)=\theta t(e')= t(e_2)$, contradicting to that $\theta$ is a swap.

Those pairs of edges which may cause problems in construction of $\theta$ are called bad pairs. Several types of bad  pairs are defined regarding to different types of errors, see $U_{\pm}$ and $B_{i}$ in the beginning of Section~\ref{sec-lower_bound}. We should avoid changing the values of edges presented in the bad pairs when constructing $\theta$. The construction is in two steps. Firstly, by double counting we can choose a pair of good vertices such that very few edges in $D(u)\triangle D(v)$ appear in some bad pair (Lemma~\ref{U+U-sum}). Then we can use the construction of $\theta$ as above by avoiding those bad edges and get a $p$-swap with high label sum discrepancy (Lemma~\ref{lemma_robust_lower_bound}).
\end{proof}

 \vspace{0.3cm}

As explained in Introduction,  we always assume that the magnitude $p$ is  \emph{admissible}, that is, satisfies  $p=p(n)=o(n)$ and $\lim_{n\to\infty}p(n)=+\infty$.
 Since $\alpha=O(n)$, we have $\alpha=o(pn)$. To measure the robustness of $t$ asymptotically, we define the \emph{$p$-robustness ratio of $t$} as \[r(p, t)= \varlimsup_{n\to\infty}{R(p, t, n)}/{2pn},\] which satisfies $\frac 1 2\le r(p, t)\le 1$ by Theorem~\ref{bou}.
 A smaller value of $r(p, t)$ means a better robust property of $t$. Our aim is to construct  $\alpha$-almost supermagic labelings on $K_n$ with $\alpha=O(n)$ and $p$-robustness ratios closing to $1/2$ for all admissible $p$.

\vspace{0.3cm}

\noindent\textbf{The robustness ratio and long 1-APs.} Surprisingly, we find that the value of $p$-robustness ratio has strong connection with the density of long 1-APs (a set of consecutive integers) in the label set of each vertex. Roughly speaking, if for each vertex $v$, the long $1$-APs consist of a large fraction of $S(t, v)$ for some $t$, then $t$ has low $p$-robustness ratio. For a given labeling $t$, vertex $v$ and magnitude $p$, we say $S(t, v)$ is \emph{of $(m, \ell)_p$ type for some integers $m$ and $\ell$}, if there exist at most $m$ disjoint $1$-APs in $S(t, v)$ each of length at least $2p$, such that the sum of their lengths is at least $\ell$. Furthermore, we say those disjoint $1$-APs \emph{witness} an $(m, \ell)_p$ type for $v$. If for each vertex $v$, $S(t, v)$ is of $(m, \ell)_p$ type, then we say the edge labeling $t$ is of $(m, \ell)_p$ type. We have the following theorem.

\begin{thmbis}{cor-ml} Let $m$ be a constant integer, $\ell=\beta n$ with $0<\beta<1$ and $\alpha=O(n)$. If $t$ is an  $\alpha$-almost supermagic labeling  on $K_n$ of $(m, \ell)_p$ type, then $r(p,t)\leq 1-\beta$.
\end{thmbis}

From Theorem~\ref{cor-ml}, to construct a labeling $t$ with low robustness ratio, we only need to ensure that the density of long $1$-APs in the label set of each vertex is high. The following is a proof sketch. For the complete version, see Section~\ref{sec-old_constructions_I}.
\begin{proof}[Sketch of proof of Theorem~\ref{cor-ml}]
Firstly we show that the label sum of a long 1-AP does not change too much after a $p$-swap. More specifically, if an edge set $F$ satisfies that $S(t, F)$ is a 1-AP with length $|F|\ge 2p$, then for any $p$-swap $\theta$ on $t$, we can show that $|s(\theta t, F)-s(t, F)|\le p^2$ (Lemma~\ref{1-AP_bridge_lemma}).

Then consider any vertices $u\ne v$. Note that $t$ is of $(m, \ell)_p$ type. So there are at most $m$ long 1-APs contained in $S(t, u)$ and in $S(t, v)$ with length sums at least $\ell$ for each vertex. Hence, the sum difference from $t$ to $\theta t$ caused by the edges with values in those 1-APs is at most $2mp^2$, while the other edges in $D(u)\triangle D(v)$ each can contribute at most $p$ to the sum difference. As a consequence, $|s(\theta t, u)-s(\theta t, v)|\le |s(t, u)-s(t, v)|+2mp^2  +2pn(1-\beta)$. Since $t$ is $\alpha$-almost supermagic and $\alpha=o(pn)$, $r(p, t)\le 1-\beta$.
\end{proof}

\vspace{0.3cm}

On the other hand, if long 1-APs are too sparse in the label sets of a certain number of vertices, we can  deduce that the $p$-robustness ratio of the labeling performs badly. See the following simplified version of  Lemma~\ref{vertex_link_lemma}.  Recall that $\epsilon=\binom{n}{2}$.

\vspace{0.2cm}
\noindent\textbf{Simplified version of Lemma~\ref{vertex_link_lemma}.}
\textit{Let $m\ge 0$ be an integer and $t$ be an $\alpha$-almost supermagic labeling on $K_n$. If there exist $m$ disjoint 1-APs $I_1, \ldots, I_m$ in $[\epsilon]$ with $|I_i|\ge 4p$ for any $i\in [m]$ and a set $T$ of at least $\log n$ vertices such that for each $v\in T$, the 1-APs $I_1, \ldots, I_m$ nearly cover $S(t, v)$, but $|I_i\cap S(t, v)|\le 2$ for each  $i\in [m]$, then $r(p, t)=1-o(1)$.}

\begin{proof}[Sketch of proof of Lemma~\ref{vertex_link_lemma}]
Consider any vertex $v\in T$. Divide $S(t, v)$ into two parts: the set of values in those 1-APs, {denoted by $S_I$; and the set of values outside those intervals, denoted by $S_O$. By the requirements of $T$, $|S_I|$ is almost $n-1$, while $|S_O|$ is small.

Let $P(v)$ denote the set of pairs in $S(t, v)$ with difference at most $p$, then compute the size of $P(v)$. Since $|S_O|$ is small, almost all such pairs are with both edges in $S_I$. However, each $I_i$ is shattered in $S(t, v)$. So one edge in $S_I$ can be only contained in at most three such pairs, and  $P(v)$ is of a small size.}

By pigeon-hole principle and double counting, for any $h>0$, there exists some magnitude $p^*\in [p-h+1, p]$ such that we can choose two good vertices $v_1\ne v_2$ in $T$, satisfying that the number of edges in $D(v_1)\triangle D(v_2)$  appearing in some bad pair with magnitude $p^*$ is almost bounded by the size of $P(v_1)\cup P(v_2)$, which is small. Then one can use the same process as constructing a $p$-swap in the proof of Theorem~\ref{bou} to prove that $r(p^*, t)$ is near  to 1. The proof ends by the non-decreasing property of $r(p, t)$ with $p$.
\end{proof}

Using Lemma~\ref{vertex_link_lemma}, we can carefully check the supermagic and $1$-almost supermagic labeling constructions listed in~\cite{Colbourn2021} and~\cite{stewart_1967}, and then show that those constructions and interaction constructions are all with bad $p$-robustness ratios near to 1. For details, see Example~\ref{exg} and Appendix~\ref{sec-old_constructions}.

\vspace{0.3cm}

\noindent\textbf{Weaving square and astray good labelings.} As aforementioned, all previously known supermagic and almost supermagic labelings are bad for robustness ratio. Good labelings need to have long 1-APs in the label set of each vertex. We  use a type of squares of integers, called {\it weaving squares}, to help distributing long 1-APs efficiently and evenly into each $S(t, v)$.

A weaving square is defined in Definition~\ref{definition_weaving_matrix}, and see Table~\ref{t2} for an example. Basically, a weaving square of order $4q$ should satisfy the following four requirements. (1) All entries are in $[16q^2]$ and distinct from each other. (2) The row sums and column sums are all the same. (3) There exist two disjoint 1-APs with length $q$ in every row and column. (4) The entries with values in $[8q^2]$ cover exactly half of the entries in each row and column. In fact, the entries in a weaving square can be viewed as the values in a supermagic labeling of $K_{4q,4q}$. Combined with a supermagic labeling on $2K_{2q[2]}$, and a random labeling on a matching with $4q$ edges, we can get a labeling of $K_{8q}$, that is $\tau_{8q}$ in Eq.~(\ref{primary_construction}).  This labeling is of $(2, \frac n 4)_p$ type for any $p\le \frac q 2$ because of the weaving square. Then by Theorem~\ref{cor-ml}, its robustness ratio is at most $0.75$, which is away from $1$ and hence better than all previous constructions.

To extend the labeling of $K_{8q}$ to $K_n$ for all $n$ and for a better robustness ratio near to $1/2$, we establish several recursive constructions in Section~\ref{sec-main_result}, where  a kind of stronger labelings,  {\it $b$-astray good labelings}, is needed for the recursions.
Such a labeling is motivated from \cite[Theorem 4]{stewart_1967} and defined in Definition~\ref{definition_3_astray}. Roughly speaking,  a labeling $t$ on $K_n$ is called astray good if the edge set can be partitioned into two parts: a small part $A$, called astray part, in which the edges can be labeled randomly in a wide range around the central interval of $[\epsilon]$; and the part of all other edges which are labeled very carefully with several restrictions to ensure a strong structure. The latter part can further be partitioned into two parts with equal size: the higher part $H$ consisting of all edges with values larger than those in $A$, and the lower part $L$ of those lower-than-$A$ edges.
If further for each vertex $v$, $|D(v)\cap A|\le b$ for some fixed $b$, then we say $t$ is $b$-astray good. A $b$-astray good labeling is $\frac{b^2}2 n$-almost supermagic, but the converse is not true (see Lemma~\ref{4.5n_almost}).


Our recursive constructions input a $b$-astray good labeling of $K_n$ and output a $b$-astray good labeling of $K_{n'}$ with $n'>n$. One important operation in these constructions is to view $K_n$ as an embedded subgraph of $K_{n'}$, and keep or lift the labels of edges in $L$ or $H$ as a whole (which requires the definition of astray good). If the old labeling has some long $1$-APs in the label set of $L$ or $H$, then the long $1$-APs will be kept in the new labeling. For this, a $1$-astray good labeling $T_{8q}$ on $K_{8q}$ is constructed based on $\tau_{8q}$ in Construction~\ref{const_T8q}, which will be used as an input labeling for our recursive constructions.

\vspace{0.3cm}

\noindent\textbf{Recursive constructions and Main theorem.} Three main kinds of recursive constructions are given in Section~\ref{sec:rec}. Most constructions work for $b$-astray good labelings for a general small constant integer $b$. But for our purpose, we set  $b=3$, which is the best we can do.

The first one in Construction~\ref{const_t1} is to extend the labelings of $K_n$ from $n\equiv 0\mod 8$ to all even $n$, which preserves the $3$-astray good property and the original $(m, \ell)_p$ type. The second one in Construction~\ref{const_odd} is extending the labelings of $K_n$ from  even $n$ to odd $n$, but  only keeps the almost supermagic property and  the $(m, \ell)_p$ type. So starting from the $1$-astray good labeling $T_{8q}$ on $K_{8q}$ given in Construction~\ref{const_T8q}, and applying the above two recursive constructions, we can get an $n$-almost supermagic labeling of $K_n$ for all $n$, but with the same robustness ratio as $T_{8q}$, which is $3/4$.

To reduce the robustness ratio from $3/4$ to $1/2$, we need to improve the $(m, \ell)_p$ type of the labeling after recursion, which is a key ingredient when estimating the robustness ratio by Theorem~\ref{cor-ml}.
The third and the most important construction is given in Construction~\ref{const_t0}, which
 inputs  a 3-astray good labeling $t$ on $K_{4q}$, and outputs a 3-astray good labeling $t'$ on $K_{8q}$; if $q\ge 2p$ and $t$ is of $(m, \ell)_p$ type for some $m$ and $\ell$, then $t'$ is of $(m+2, \ell+2q)_p$ type.


We first sketch Construction~\ref{const_t0} and its proof. Split the complete graph $K_{8q}$ into three pieces: two complete graphs $K_{4q}$, and a complete bipartite graph $K_{4q,4q}$. For the two complete graphs, the constraint of $t'$ on them is just two shifted copies of the input $t$  by shifting the values of the edges in each $A$, $L$, $H$ as a whole to cover different values in the underground interval $[\epsilon_{8q}]$. Then the constraint of $t'$ on the bipartite graph is a shifted version of the weaving square of order $4q$. In the label set of each vertex  under $t'$, there are $m$ disjoint $1$-APs of total length $\ell$ inherited from $t$, and two disjoint $1$-APs of length $q$ from the weaving square, so $t'$ is of $(m+2, \ell+2q)_p$ type. The $3$-astray good property of $t'$ can be obtained by routine verification.


We next analyze the goodness of Construction~\ref{const_t0}. By Theorem~\ref{cor-ml}, if both the limits exist,

%
%
$$r(p, t')\le 1-\lim_{q\to\infty}\frac{\ell+2q}{8q}= \frac 1 2\left(\frac 1 2+(1-\lim_{q\to\infty}\frac \ell {4q})\right).$$
This means doing one step of Construction~\ref{const_t0} can decrease the upper bound of robustness ratio  to the average between the original bound and $1\slash 2$. That is why we say $t'$ has a better type  than $t$.
So for any given $\varepsilon\in (0, 1)$, after doing $s\triangleq \lceil -\log \varepsilon\rceil$ steps of Construction~\ref{const_t0}, one can reduce any $3$-astray good labeling no matter how worse the robustness ratio is to a $3$-astray good labeling with ratio at most $\frac 1 2+\varepsilon$. However, this only works for $n$ with $2^{s+3}\mid n$ since each step doubles the vertex numbers.


To make this work for all $n$, we consider a non-increasing sequence $n_0, n_1, \ldots, n_s$, such that $n_0=n$, $n_1=8\lfloor \frac n 8\rfloor$, and for any $i\in [2, s]$, $n_i=4\lfloor\frac{n_{i-1}} 8\rfloor$. By our definition, all $n_i$, $i\ge 1$ are doubly even, and for any $i\in [s-1]$, either $n_i= 2n_{i+1}$ or $n_i= 2n_{i+1}+4$.
We start from a $3$-astray good labeling $t_s$ on $K_{n_s}$, which is either some $T_{8q}$ from Construction~\ref{const_T8q} or some $t_{(2)}$ from Construction~\ref{const_t1}.
 Then for the $i$th step, $i\in [s-1]$, we define $t_{s-i}$ from $t_{s+1-i}$ as follows. If  $n_i= 2n_{i+1}$, apply Construction~\ref{const_t0}; if $n_i= 2n_{i+1}+4$, first apply Construction~\ref{const_t0}, then apply Construction~\ref{const_t1}. After $s-1$ such rounds, we can get a $3$-astray good $t_1$ of $K_{n_1}$ reaching the desired robustness ratio. The final step from $n_1$ to $n$ goes by the first two constructions depending on the parity of $n$. By following this way,  our {\bf{Main Result}} is proved.


\section{ Bounds on the Robustness }\label{sec-lower_bound}

This section serves to provide bounds on the $p$-robustness of a general labeling of $K_n$. Since the robustness is defined on all $p$-swaps of an edge labeling $t$, to create a legal $p$-swap, we need to take care of the pairs of edges with label difference $p$ (or sometimes, $2p$) under $t$.   The following notations are useful and give a classification for the bad pairs mentioned in the Overview.

Let $(V, E)$ be a complete graph $K_n$ and let $t$ be an edge labeling on it. For any vertex $v$, we call a pair $\{e, e'\}\subset D(v)$ a $v$-bad pair if $|t(e)-t(e')|=p$. Denote the set of all $v$-bad pairs as $B_0(v)$. Further define
\[U_+(t, v)=\{e\in D(v): \exists e'\in D(v) \text{ s.t. } t(e')=t(e)+p\}\] and \[U_-(t, v)=\{e\in D(v): \exists e'\in D(v) \text{ s.t. } t(e')=t(e)-p\}.\]

We claim that for any two different vertices $u$ and $v$, $U_+(t, u)\cap U_+(t, v)=\emptyset$. Otherwise, if there exists one edge $e$ such that $e\in U_+(t, u)\cap U_+(t, v)$, then by definition there exists another edge $e'$ with $t(e')=t(e)+p$ such that $e'\in D(u)\cap D(v)$. This means both $e$ and $e'$ are in $D(u)\cap D(v)$, contradicting to the simplicity of the graph. The same argument also shows $U_-(t, u)\cap U_-(t, v)=\emptyset$. So
\begin{equation}\label{upper_bound_U}
\sum_{v\in V}|U_+(t, v)|\le\epsilon \text{ and }
\sum_{v\in V}|U_-(t, v)|\le\epsilon.
\end{equation}
Recall that $\epsilon=\binom{n}{2}$.

For any pair of vertices $u\ne v$ and two edges $e_1, e_2$ satisfying $u\in e_1$ and $v\in e_2$, we say $\{e_1, e_2\}$ is a $\{u, v\}$-bad pair of type I if $|t(e_1)-t(e_2)|=p$, and of type II if $|t(e_1)-t(e_2)|=2p$.
The set of $\{u, v\}$-bad pairs of type I is denoted by $B_1(\{u, v\})$, and the multiset of disjoint union of $B_1(\{u, v\})$ for all $\{u, v\}$ is denoted by $B_1$. Similarly we can define $B_2(\{u, v\})$ and $B_2$ for type II bad pairs, and further $B(\{u, v\})$ and $B$ for the union of both types of bad pairs (not including $v$-bad pairs).

We claim that the sizes of $B_1$ and $B_2$ are roughly upper bounded by $2n^2$.
In fact, for any pair $\{e_1, e_2\}$ with $t(e_1)-t(e_2)=p$ or $2p$, it is a common bad pair for at most four pairs of vertices. On the other hand, there are $\epsilon -p$ and $\epsilon -2p$ pairs of edges with labeling difference $p$ and $2p$, respectively. So,
\begin{equation}\label{upper_bound_B}
|B_1|\le 4(\epsilon-p)\text{ and }|B_2|\le 4(\epsilon-2p).
\end{equation}

\begin{lemma}\label{lemma_robust_lower_bound}
For any edge labeling $t$ on $K_n$ and some integer $\ell$, if there exist vertices $u\ne v$ such that
\begin{equation}\label{eq1}
  |U_+(t, u)|+|U_-(t, v)|+2|B(\{u, v\})|\le \ell,
\end{equation}
then there exists a $p$-swap $\theta$ on $t$ for any $p$ such that
\begin{equation}\label{eq2}
  |s(\theta t, u)-s(\theta t, v)|\ge p(2n-2p-4-\ell)-|s(t, u)-s(t, v)|.
\end{equation}
\end{lemma}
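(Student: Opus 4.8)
The plan is to exhibit $\theta$ explicitly in the form $\theta t=\pi\circ t$, where $\pi$ is a permutation of $[\epsilon]$ with $|\pi(x)-x|\le p$ for all $x$; any such $\theta$ is automatically a $p$-swap of $t$. Writing $A:=S(t,u)$ and $C:=S(t,v)$, we have $A\cap C=\{t(uv)\}$ and $|A|=|C|=n-1$, hence $|A\setminus C|=|C\setminus A|=n-2$, and
\[
s(\theta t,u)-s(\theta t,v)=\bigl(s(t,u)-s(t,v)\bigr)+\sum_{a\in A\setminus C}\bigl(\pi(a)-a\bigr)-\sum_{c\in C\setminus A}\bigl(\pi(c)-c\bigr),
\]
the contribution of the common label $t(uv)$ cancelling. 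So the aim is to pick $\pi$ that raises as many elements of $A\setminus C$ as possible by exactly $p$, lowers as many elements of $C\setminus A$ as possible by exactly $p$, and fixes the remaining labels occurring in $A\cup C$.

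First I would take $\pi$ to be a product of pairwise disjoint transpositions only, so that its displacement is at most $p$ by construction and its support is easy to track. For every $a\in A\setminus C$ with $a+p\le\epsilon$ and $a+p\notin A$, propose the transposition $\tau_a$ exchanging $a$ and $a+p$; for every $c\in C\setminus A$ with $c-p\ge1$ and $c-p\notin C$, propose the transposition $\sigma_c$ exchanging $c-p$ and $c$. One checks directly that the $\tau_a$ are pairwise disjoint and the $\sigma_c$ are pairwise disjoint (an overlap would force some label to lie simultaneously in $A$ and outside $A$, or in $C$ and outside $C$), and that a $\tau_a$ and a $\sigma_c$ are either identical (exactly when $c=a+p$, in which case this single transposition already benefits both $u$ and $v$) or disjoint, with the one exception $c=a+2p$. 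In that exceptional case $a$ and $a+2p$ are realized respectively by an edge at $u$ and an edge at $v$, i.e.\ they form a type-II $\{u,v\}$-bad pair counted in $B(\{u,v\})$; for each such clash I would delete one of the two offending transpositions. Since every proposed transposition clashes with at most one other, the surviving family $\mathcal{T}$ is then a set of pairwise disjoint valid transpositions, and $\pi:=\prod_{\tau\in\mathcal{T}}\tau$ has displacement at most $p$ and fixes $t(uv)$.

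It remains to count. An element $a\in A\setminus C$ carries no $\tau_a$ only if $a+p>\epsilon$ (at most $p$ such $a$) or $a+p\in A$ (such $a$ lie in $U_+(t,u)$, at most $|U_+(t,u)|$ of them); symmetrically at most $p+|U_-(t,v)|$ elements of $C\setminus A$ carry no $\sigma_c$. The conflict graph on the proposed transpositions is a matching, each edge charged to a distinct type-II bad pair, so at most $|B(\{u,v\})|$ transpositions are discarded. Hence, with $R_u:=\{a\in A\setminus C:\pi(a)=a+p\}$ and $R_v:=\{c\in C\setminus A:\pi(c)=c-p\}$,
\[
|R_u|+|R_v|\ \ge\ 2(n-2)-2p-|U_+(t,u)|-|U_-(t,v)|-|B(\{u,v\})|\ \ge\ 2n-2p-4-\ell
\]
by hypothesis~(\ref{eq1}). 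Since $\pi$ sends each $a\in A\setminus C$ to $a$ or $a+p$ and each $c\in C\setminus A$ to $c$ or $c-p$, the displayed identity reduces to $s(\theta t,u)-s(\theta t,v)=\bigl(s(t,u)-s(t,v)\bigr)+p(|R_u|+|R_v|)$, and (\ref{eq2}) follows after bounding $|s(t,u)-s(t,v)|$ and taking absolute values.

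The step I expect to be the main obstacle is the verification compressed into the second paragraph: one must confirm that \emph{every} obstruction to the naive ``raise the edges at $u$, lower the edges at $v$'' prescription extending to a displacement-$\le p$ permutation is captured by membership in $U_+(t,u)$, $U_-(t,v)$, or $B(\{u,v\})$ --- handling in particular the boundary labels near $1$ and $\epsilon$ and the common label $t(uv)$ --- and that deleting a single transposition per clash removes all conflicts at once. The remaining estimates are routine.
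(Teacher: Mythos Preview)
Your argument is correct and follows the same underlying strategy as the paper: build an explicit $p$-swap that raises as many labels in $S(t,u)$ and lowers as many in $S(t,v)$ by exactly $p$ as possible, and count the exceptions. The organization differs. The paper first \emph{prunes} $D(u)$ and $D(v)$ by discarding $U_+(t,u)$, $U_-(t,v)$, the $p$ boundary labels, the edge $uv$, and every edge appearing in any $\{u,v\}$-bad pair (both types); it then checks that the shift-by-$\pm p$ map on the surviving sets $D',D''$ and their translates is a well-defined bijection. You instead propose every individually legal transposition and resolve conflicts afterward. Your route is slightly sharper: the case analysis shows that type-I bad pairs never produce a conflict at all (they yield the shared transpositions $\tau_a=\sigma_{a+p}$, which help both $u$ and $v$), and that each genuine clash is between a pure $\tau_a$ and a pure $\sigma_{a+2p}$ with no further neighbours, so the conflict graph really is a matching and one deletion per clash costs exactly one unit of $|R_u|+|R_v|$. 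Consequently your count loses only $|B_2(\{u,v\})|$, whereas the paper loses $|B_u|+|B_v|\le 2|B(\{u,v\})|$; for the lemma as stated this is immaterial, but it shows the coefficient $2$ on $|B(\{u,v\})|$ in hypothesis~(\ref{eq1}) --- and indeed the presence of type-I pairs in $B(\{u,v\})$ --- is not really needed.
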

\begin{proof} Let $t$ be any edge labeling of $K_n$ with two vertices $u$ and $ v$ satisfying Eq.~(\ref{eq1}). Our aim is to construct a $p$-swap $\theta$ on $t$ satisfying Eq.~(\ref{eq2}).

 Denote \[B_u=\{e\in D(u): \text{$\exists e'\in D(v)$ s.t. $\{e, e'\}\in B(\{u, v\})$}\},\] and \[B_v=\{e\in D(v): \text{$\exists e'\in D(u)$ s.t. $\{e, e'\}\in B(\{u, v\})$}\}.\] Note that for any $\{u, v\}$-bad pair $\{e, e'\}$, there is a unique ordering $(e_1, e_2)$ of $\{e, e'\}$ such that $u\in e_1$ and $v\in e_2$.
 So $|B_u|, |B_v|\le |B(\{u, v\})|$.
Since $t$ is bijective between $E$ and $[\epsilon]$, we do not distinguish an edge set  and its label set from now on.
Define \[D'=D(u) \backslash \left(U_+(t, u)\cup B_u \cup \left[\epsilon-p+1, \epsilon \right] \cup \{uv\}\right),\] and \[D''=D(v) \backslash \left(U_-(t, v)\cup B_v \cup [p] \cup \{uv\}\right).\] Notice that $D'\cap D''=\emptyset$ and $|D'|+|D''|\ge 2(n-1)-2p-2-(|U_+(t, u)|+|U_-(t, v)|+2|B(\{u, v\})|)\ge 2n-2p-4-\ell$.

We claim that the three sets of edges,
\begin{equation*}
  \begin{array}{cl}
    E_1 = & D'\cup (D''-p), \\
    E_2 = &  (D'+p)\cup D'', \text{ and }\\
    E_3= & (D(u)\backslash D')\cup (D(v)\backslash D'')
  \end{array}
\end{equation*}
 are pairwise disjoint. For $E_1$ and $E_2$, it suffices to show that   $D'\cap (D'+p)=(D''-p)\cap (D'+p)=(D''-p)\cap D''=\emptyset$. First, if there exists some $e\in D'\cap (D'+p)$, then $e-p\in D'$ and $e\in D'$, which means $e-p \in U_+(t, u)$, contradicting to $e-p\in D'$. Second, if there exists some $e\in (D''-p)\cap (D'+p)$, then $\{e-p, e+p\}$ is $\{u, v\}$-bad and $e-p\in B_u$, contradicting to $e\in D'+ p$. Finally, if there exists some $e\in (D''-p)\cap D''$, then $e+p$ and $e$ are both in $D''$, which means $e+p\in U_-(t, v)$, contradicting to $e+p\in D''$.

For $E_1$ and $E_3$, it suffices to show that $D'\cap (D(v)\backslash D'')=(D''-p)\cap (D(u)\backslash D')=(D''-p)\cap (D(v)\backslash D'')=\emptyset$. The first case is trivial since $D'\subset D(u)$ and $D(u)\cap D(v)= \{uv\}$, but the edge $uv$ has been excluded from $D'$.
  If there exists some $e\in (D''-p)\cap (D(u)\backslash D')$, then $\{e, e+p\}$ is $\{u, v\}$-bad and hence $e+p\in B_v$, contradicting to $e\in (D''-p)$. So $(D''-p)\cap (D(u)\backslash D')=\emptyset$. If there exists some $e\in (D''-p)\cap (D(v)\backslash D'')$, then $e$ and $e+p$ are both in $D(v)$ and hence $e+p\in U_-(t, v)$, contradicting to $e\in (D''-p)$. So $(D''-p)\cap (D(v)\backslash D'')=\emptyset$.

  Using the same analysis, we can also prove that $E_2$ and $E_3$ are disjoint.
Now we construct our $p$-swap $\theta$ on $t$  as follows.
\begin{equation*}
\theta t(e)=\left\{
\begin{array}{ll}
t(e)+p, & e\in E_1;\\
t(e)-p, & e\in E_2;\\
t(e), & otherwise.
\end{array}
\right.
\end{equation*}
Since $E_1\cap E_2=\emptyset$ and $E_1+p=E_2$, $\theta$ is a well defined $p$-swap. Now we compute the differences of the label sums of $u$ and $v$ under $\theta t$.
  \begin{align*}
    s(\theta t, u)-s(\theta t, v) & =
s(\theta t, D')+ s(\theta t, D(u)\backslash D') -s(\theta t, D'')- s(\theta t, D(v)\backslash D'')  \\
     & =
|D'|p+s(t, u)+|D''|p-s(t, v) \\
& = (|D'|+|D''|)p+s(t, u)-s(t, v)\\
& \ge (2n-2p-4-\ell)p-|s(t, u)-s(t, v)|.
  \end{align*}
The second equality is due to that $D'\subset E_1$, $D''\subset E_2$ and $E_3=  (D(u)\backslash D')\cup (D(v)\backslash D'')$ is disjoint with $E_1\cup E_2$.
\end{proof}


The following lemma provides a sufficient condition for  the  existence of a pair of vertices required by Lemma~\ref{lemma_robust_lower_bound}.

\begin{lemma}\label{U+U-sum}
For any edge labeling $t$ on $K_n$, if \[\sum_{u\in V}(|U_+(t, u)|+|U_-(t, u)|)\le nm\] for some real  number $m$, then there exist two different vertices $u, v\in V$ such that
\[|U_+(t, u)|+|U_-(t, v)|+2|B(\{u, v\})| \le m+16.\]

\end{lemma}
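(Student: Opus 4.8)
The plan is to use an averaging argument over pairs of vertices, combined with the global bounds on $\sum_v |U_\pm(t,v)|$ (the hypothesis) and on $|B_1|, |B_2|$ (Eq.~(\ref{upper_bound_B})). First I would note that each bad pair in $B(\{u,v\})$ is a pair $\{e_1,e_2\}$ with $e_1 \ni u$, $e_2 \ni v$ and $|t(e_1)-t(e_2)| \in \{p, 2p\}$; summing $|B(\{u,v\})|$ over all ordered (or unordered) pairs $\{u,v\}$ simply recounts the multisets $B_1$ and $B_2$, so $\sum_{\{u,v\}} |B(\{u,v\})| = |B_1| + |B_2| \le 4(\epsilon - p) + 4(\epsilon - 2p) < 8\epsilon = 4n(n-1)$. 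Hence the average of $2|B(\{u,v\})|$ over the $\binom n2$ unordered pairs is at most $8n(n-1)/\binom n2 = 16$. Similarly, the hypothesis $\sum_u (|U_+(t,u)| + |U_-(t,u)|) \le nm$ says the average of $|U_+(t,u)|$ over all $u$ is at most $m/2$, and likewise for $|U_-|$; since these are independent of the second coordinate, for a random ordered pair $(u,v)$ the expectation of $|U_+(t,u)| + |U_-(t,v)|$ is at most $m$ (this is the step where we must be slightly careful to separate the two coordinates rather than forcing $u = v$).

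Then I would put the two estimates together. Consider the quantity $f(u,v) = |U_+(t,u)| + |U_-(t,v)| + 2|B(\{u,v\})|$ over ordered pairs $(u,v)$ with $u \ne v$. Averaging over all such ordered pairs, the contribution of the first two terms is at most $m + O(m/n)$ (the correction coming from excluding $u = v$ from the $n^2$ ordered pairs — a negligible lower-order term, which is why the stated bound is $m + 16$ and not exactly $m + 16$ with no slack, or rather why a tiny slack is absorbed), and the contribution of $2|B(\{u,v\})|$ is at most $16$. So the average of $f$ over ordered pairs is at most $m + 16$ (after absorbing the $O(m/n)$ and the $O(1/n)$ corrections), and therefore there must exist some ordered pair $(u,v)$, $u \ne v$, with $f(u,v) \le m + 16$. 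That is exactly the desired conclusion.

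The one point that needs care — and the place I'd expect to do the only real bookkeeping — is making the counting of $\sum_{\{u,v\}} |B(\{u,v\})|$ match the definition of $B_1, B_2$ precisely: a pair of edges $\{e_1, e_2\}$ with $|t(e_1) - t(e_2)| = p$ contributes to $B(\{u,v\})$ whenever $u$ is an endpoint of $e_1$ and $v$ is an endpoint of $e_2$ (or vice versa), so it is counted for at most four unordered vertex-pairs, which is exactly the multiplicity used to derive Eq.~(\ref{upper_bound_B}); the multiset $B_1 \uplus B_2$ already encodes this multiplicity, so $\sum_{\{u,v\}} |B(\{u,v\})| = |B_1| + |B_2|$ holds on the nose. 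I would also double-check the exact way the averaging interacts with the constraint $u \ne v$: since we restrict to $n(n-1)$ ordered pairs rather than $n^2$, the average of $|U_+(t,u)| + |U_-(t,v)|$ is at most $nm/(n-1) \le m + m/(n-1)$, and for $n$ large this lower-order term (together with the analogous one for $B$) is comfortably absorbed into the additive constant, giving the clean bound $m + 16$; for small $n$ one can either verify directly or note the statement is applied asymptotically. With these counting identities in hand the proof is a one-line pigeonhole argument.
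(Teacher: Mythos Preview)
Your approach is essentially the same averaging argument as the paper's (which phrases it as a proof by contradiction and symmetrizes to work over unordered pairs). However, your worry about lower-order corrections is misplaced: summing $|U_+(t,u)|+|U_-(t,v)|$ over the $n(n-1)$ ordered pairs with $u\ne v$ gives exactly $(n-1)\sum_{w}(|U_+(t,w)|+|U_-(t,w)|)\le (n-1)nm$, since the marginal distribution of each coordinate is uniform on $V$; and summing $2|B(\{u,v\})|$ over ordered pairs gives $4(|B_1|+|B_2|)\le 32\epsilon=16n(n-1)$. Dividing by $n(n-1)$ yields an average of at most $m+16$ on the nose, for every $n$, so no asymptotic caveat or absorbed slack is needed.
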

\begin{proof}
For simplicity, denote $\mu(t, u)=|U_+(t, u)|+|U_-(t, u)|$ for any vertex $u$, then $\sum_{u\in V}\mu(t, u)\le nm$. Suppose on the contrary, such two vertices do not exist. Then for any $u\ne v$, $\mu(t, u)+ \mu(t, v) +4|B(\{u, v\})|>2m+32$. Adding up this inequality over all vertex pairs, we have
\begin{equation*}
\sum_{uv \in E } \left(\mu(t, u)+ \mu(t, v)+ 4|B(\{u, v\})|\right)> \epsilon  (2m+32).
\end{equation*}
Further, on the left hand side,
\begin{equation*}
\begin{array}{ll}
&\sum_{uv\in E } \left(\mu(t, u)+\mu(t, v)+ 4|B(\{u, v\})|\right)\\
=& (n-1)\sum_{v\in V}\mu(t, v)+4|B_1|+4|B_2|\\
\leq & (n-1)nm+16n(n-1),
\end{array}
\end{equation*}
where the last inequality is from Eq.~(\ref{upper_bound_B}). Since $(n-1)nm+16n(n-1) = \epsilon  (2m+32)$, there is a contradiction.

\end{proof}

Combining Lemmas~\ref{lemma_robust_lower_bound} and \ref{U+U-sum}, we have the following bound of $R(p, t, n)$.

\begin{theorem}\label{bou}
  For any $\alpha$-almost supermagic labeling $t$ on $K_n$,
\[(2n-4)p+\alpha\ge R(p, t, n) \ge (n-2p-19)p-\alpha.\]
\end{theorem}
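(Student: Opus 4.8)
\textbf{Proof plan for Theorem~\ref{bou}.}
The statement has two halves; the upper bound is easy and the lower bound is where all the work lies, but fortunately most of that work is already packaged in Lemmas~\ref{lemma_robust_lower_bound} and~\ref{U+U-sum}. For the upper bound, fix any $p$-swap $\theta$ and any two vertices $u\neq v$. Only the labels of edges can change, and $s(\theta t,u)-s(\theta t,v)$ differs from $s(t,u)-s(t,v)$ only through edges whose labels actually moved and which lie in exactly one of $D(u),D(v)$; edges in $D(u)\cap D(v)=\{uv\}$ cancel. Since $|D(u)\triangle D(v)|=2(n-1)-2=2n-4$ and each such edge contributes a change of at most $p$ in absolute value, we get $|s(\theta t,u)-s(\theta t,v)|\le |s(t,u)-s(t,v)|+(2n-4)p\le \alpha+(2n-4)p$. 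Taking the maximum over $\theta$ and over $u\neq v$ yields $R(p,t,n)\le (2n-4)p+\alpha$.

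For the lower bound, the plan is to feed the right parameter $m$ into Lemma~\ref{U+U-sum} and then invoke Lemma~\ref{lemma_robust_lower_bound}. First I would bound $\sum_{u\in V}\mu(t,u)=\sum_{u\in V}(|U_+(t,u)|+|U_-(t,u)|)$. By Eq.~(\ref{upper_bound_U}) this sum is at most $2\epsilon=n(n-1)\le n^2$, so we may take $m=n$ (or more conservatively any $m\ge n-1$) in Lemma~\ref{U+U-sum}. That lemma then produces two distinct vertices $u,v$ with $|U_+(t,u)|+|U_-(t,v)|+2|B(\{u,v\})|\le m+16 = n+16$. Set $\ell:=n+16$. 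Lemma~\ref{lemma_robust_lower_bound} now hands us a $p$-swap $\theta$ with
\[
|s(\theta t,u)-s(\theta t,v)|\ge p(2n-2p-4-\ell)-|s(t,u)-s(t,v)|.
\]
Substituting $\ell=n+16$ gives $p(2n-2p-4-n-16)=p(n-2p-20)$, and since $t$ is $\alpha$-almost supermagic, $|s(t,u)-s(t,v)|\le\alpha$. Hence $R(p,t,n)\ge p(n-2p-20)-\alpha$. This is essentially the claimed bound; a slightly more careful accounting of the constant (using $\sum\mu(t,u)\le n(n-1)$ rather than $n^2$, which lets one take $m=n-1$, and tracking the exact constants $16$ versus the stated $19$) recovers $p(n-2p-19)-\alpha$ exactly, so the only remaining task is to check that the constant works out — a routine arithmetic verification that I would state but not belabor.

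I do not anticipate a genuine obstacle here, since the two lemmas do the heavy lifting; the one place to be slightly careful is the bookkeeping in choosing $m$ so that the final constant matches $19$ rather than some larger number, and making sure the inequality $\sum_{u}\mu(t,u)\le n(n-1)$ is applied with the correct factor (each of the two sums in Eq.~(\ref{upper_bound_U}) is at most $\epsilon$, so together at most $2\epsilon = n(n-1)$). One should also note in passing that for the bound to be non-vacuous one needs $n-2p-19>0$, i.e.\ $p$ not too large relative to $n$, which is consistent with the admissibility assumption $p=o(n)$ used throughout; for large $p$ the stated lower bound is simply a (possibly negative) true inequality and carries no content, so no separate case analysis is needed.
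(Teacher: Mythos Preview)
Your proposal is correct and follows essentially the same approach as the paper. The paper takes $m=n-1$ directly (since $\sum_u \mu(t,u)\le 2\epsilon = n(n-1)=nm$ by Eq.~(\ref{upper_bound_U})), obtains $\ell=n+15$ from Lemma~\ref{U+U-sum}, and then Lemma~\ref{lemma_robust_lower_bound} gives $p(2n-2p-4-(n+15))-\alpha=(n-2p-19)p-\alpha$; your refinement paragraph arrives at exactly this, so the only difference is that you first wrote out the slightly weaker $m=n$ version before tightening.
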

\begin{proof}
Let $m=n-1$. Then by  Eq.~(\ref{upper_bound_U}), $\sum_{u\in V}(|U_+(t, u)|+|U_-(t, u)|)\le nm$ for any edge labeling $t$. By Lemma~\ref{U+U-sum},  there exists a pair $u, v$ of vertices such that $|U_+(t, u)|+|U_-(t, v)|+2|B(\{u, v\})| \le n+15$. Then by Lemma~\ref{lemma_robust_lower_bound} with $\ell=n+15$, $R(p, t, n) \ge (n-2p-19)p-\alpha.$ The upper bound is trivial since any two different vertices $u, v$ in $K_n$ satisfy $|D(u)\Delta D(v)|=2n-4$.
\end{proof}

%

As explained in the Introduction and Overview,  we always assume that the magnitude $p$ satisfies the following two conditions:
\begin{itemize}
\item[] (O1) $p=p(n)=o(n)$,
\item[] (O2) $\lim_{n\to\infty}p(n)=+\infty$.
\end{itemize}
Such an integer $p$ satisfying (O1) and (O2) are called \emph{admissible}. Since  $\alpha=O(n)$, we have $\alpha=o(pn)$. Then Theorem~\ref{bou} can be rewritten as follows.


\begin{theorem}\label{theorem_bound_rpt}
For any $\alpha$-almost supermagic labeling $t$ on $K_n$, if $\alpha=O(n)$ and  $p$ is admissible, then
$$pn+o(pn)<R(p, t, n)<2pn+o(pn).$$
\end{theorem}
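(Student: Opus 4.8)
The plan is to deduce this statement directly from Theorem~\ref{bou}, by pushing the explicit bounds proved there through the asymptotic regime fixed by (O1), (O2) and the standing assumption $\alpha=O(n)$; no new combinatorics is needed, so this is really just a restatement of Theorem~\ref{bou} in $o$-notation.

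First I would rewrite the two inequalities of Theorem~\ref{bou} as
\[
R(p,t,n)\le (2n-4)p+\alpha = 2pn+(\alpha-4p),
\]
\[
R(p,t,n)\ge (n-2p-19)p-\alpha = pn-(2p^2+19p+\alpha),
\]
so that the claim reduces to showing that the correction terms $\alpha-4p$ and $2p^2+19p+\alpha$ are both $o(pn)$.

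The key step is then a routine order-of-magnitude check. By (O1) we have $p=o(n)$, hence $2p^2 = 2p\cdot p = o(pn)$, and a fortiori $19p,\,4p = o(pn)$; by (O2) we have $p\to\infty$, so the hypothesis $\alpha=O(n)$ upgrades to $\alpha = O(n) = o(pn)$. Summing these estimates gives $\alpha-4p = o(pn)$ and $2p^2+19p+\alpha = o(pn)$, and substituting back into the two displays yields $pn+o(pn) < R(p,t,n) < 2pn+o(pn)$; the strictness of both inequalities is automatic (for instance by absorbing an additive constant into the $o(pn)$ terms).

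I do not expect any genuine obstacle here. The only subtlety worth flagging is that the estimate $\alpha=o(pn)$ really does use $p\to\infty$ (if $p$ were bounded, $\alpha=O(n)$ would be of order $\Theta(pn)$ in the worst case and the lower bound would collapse), and that $2p^2=o(pn)$ really does use $p=o(n)$; these are precisely the roles played by the two admissibility conditions, after which the proof is a one-line substitution into Theorem~\ref{bou}.
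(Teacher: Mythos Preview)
Your proposal is correct and matches the paper's approach exactly: the paper likewise obtains Theorem~\ref{theorem_bound_rpt} as an immediate rewriting of Theorem~\ref{bou}, noting only that $\alpha=O(n)$ together with $p\to\infty$ gives $\alpha=o(pn)$, and that $p=o(n)$ handles the $p^2$ term. There is nothing to add.
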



To measure the $p$-robustness asymptotically, we define $r(p, t)= \varlimsup_{n\to\infty}\frac{R(p, t, n)}{2pn}$ as the {\it  $p$-robustness ratio} of a labeling $t$ of $K_n$. By Theorem~\ref{theorem_bound_rpt}, $\frac 1 2\le r(p, t)\le 1$. A smaller value of $r(p, t)$ means a better robust property of $t$. Our aim is to construct  $\alpha$-almost supermagic labelings on $K_n$ with $\alpha=O(n)$ and $p$-robustness ratio closing to $1/2$ for all admissible $p$. So from now on, we always assume that $p$ is admissible and $\alpha=O(n)$.



%
\section{Estimating the Robustness Ratio}\label{sec-old_constructions_I}
In this section, we develop some methods to estimate the $p$-robustness ratio for labelings satisfying certain conditions. These conditions are mainly about the distribution of $1$-APs in the label set of each vertex. Here, a $1$-AP means a consecutive interval of integers in $[\epsilon]$. In Section~\ref{sec-lower}, we show that if the label sets of a small fraction of vertices do not contain long $1$-APs, then the labeling is not good enough for resisting $p$-swaps, that is, the $p$-robustness ratio closes to  $1$. By using this criterion, we are able to show that all known constructions of supermagic labelings of $K_n$ are not robust enough (see Appendix~\ref{sec-old_constructions}).
In Section~\ref{sec-mb}, we show the contrast: if the label set of each vertex contains enough long disjoint $1$-APs, then the $p$-robustness ratio of the labeling can be well controlled.

\subsection{Big Robustness Ratio}\label{sec-lower}

The following lemma says that if the label set of each vertex has no long $1$-APs, then the $p$-robustness ratio reaches the biggest value.
\begin{lemma}\label{slice_lemma}
 For any $\alpha$-almost supermagic labeling  $t$ on $K_n$, if there exists a partition of $[\epsilon]$ into $m$ $1$-APs  $I_1, \ldots, I_m$ with $m=o(n^2\slash p)$, such that for each vertex $v$, $|I_i\cap S(t, v)|\le 1$ for all $i\in [m]$,
then $r(p, t)=1$.
\end{lemma}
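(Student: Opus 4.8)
The plan is to combine Lemmas~\ref{lemma_robust_lower_bound} and~\ref{U+U-sum} with a counting argument that cashes in the hypothesis on the $1$-APs. Since $r(p,t)\le 1$ always holds by Theorem~\ref{theorem_bound_rpt}, it suffices to exhibit, for all large $n$, a pair of vertices together with a $p$-swap witnessing $R(p,t,n)\ge 2pn-o(pn)$.

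The heart of the argument is an upper bound on $\sum_{v\in V}\bigl(|U_+(t,v)|+|U_-(t,v)|\bigr)$ much sharper than the trivial $2\epsilon$ from Eq.~(\ref{upper_bound_U}). Write $e_x$ for the edge with $t(e_x)=x$. As in the disjointness argument preceding Eq.~(\ref{upper_bound_U}), each edge $e_x$ lies in $U_+(t,v)$ for at most one vertex $v$, namely a common vertex of $e_x$ and $e_{x+p}$; hence $\sum_v|U_+(t,v)|$ equals the number of $x\in[\epsilon-p]$ for which $e_x$ and $e_{x+p}$ share a vertex. But if $e_x$ and $e_{x+p}$ share a vertex $w$, then $x,x+p\in S(t,w)$, and since $|I_i\cap S(t,w)|\le 1$ for every $i$ (and $p\ge 1$ for large $n$), the integers $x$ and $x+p$ must lie in two distinct $1$-APs of the partition. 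For each fixed $I_i=[a_i,b_i]$ there are at most $\min(|I_i|,p)\le p$ values $x\in I_i$ with $x+p\notin I_i$, so summing over $i\in[m]$ gives at most $mp$ such $x$ in total. Therefore $\sum_v|U_+(t,v)|\le mp$, and by the mirror argument $\sum_v|U_-(t,v)|\le mp$, so $\sum_v\bigl(|U_+(t,v)|+|U_-(t,v)|\bigr)\le 2mp$.

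Next I would feed this into Lemma~\ref{U+U-sum} with $2mp/n$ playing the role of the parameter $m$ there: there exist distinct vertices $u,v$ with
\[|U_+(t,u)|+|U_-(t,v)|+2|B(\{u,v\})|\le \tfrac{2mp}{n}+16=:\ell .\]
Since $m=o(n^2/p)$ we have $\ell=o(n)$. Plugging this $\ell$ into Lemma~\ref{lemma_robust_lower_bound} produces a $p$-swap $\theta$ with
\[|s(\theta t,u)-s(\theta t,v)|\ge p(2n-2p-4-\ell)-|s(t,u)-s(t,v)| .\]
Because $t$ is $\alpha$-almost supermagic with $\alpha=O(n)$ and $p$ is admissible (so $p\to\infty$ and $p=o(n)$, whence $\alpha=o(pn)$, $p^2=o(pn)$, $p\ell=o(pn)$), the right-hand side is $2pn-o(pn)$. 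Thus $R(p,t,n)\ge 2pn-o(pn)$, so $r(p,t)=\varlimsup_{n\to\infty}R(p,t,n)/(2pn)\ge 1$, and with the upper bound $r(p,t)\le 1$ we conclude $r(p,t)=1$.

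The substantive step is the counting bound $\sum_v\bigl(|U_+(t,v)|+|U_-(t,v)|\bigr)\le 2mp$; everything afterward is a mechanical substitution into Lemmas~\ref{U+U-sum} and~\ref{lemma_robust_lower_bound} followed by the asymptotic bookkeeping. The two places needing a little care are the ``at most one common vertex'' remark (this is where simplicity of $K_n$ is used) and the observation that the correct per-interval count is $\min(|I_i|,p)$ rather than $p$, so that the estimate stays valid even when some $1$-APs are short; both are immediate, so I do not expect a genuine obstacle.
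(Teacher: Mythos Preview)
Your proof is correct and follows essentially the same route as the paper: bound $\sum_v(|U_+(t,v)|+|U_-(t,v)|)$ using the $1$-AP hypothesis, then feed the result through Lemma~\ref{U+U-sum} and Lemma~\ref{lemma_robust_lower_bound}. Your per-interval count of $\min(|I_i|,p)$ is in fact slightly sharper than the paper's (yielding $mp$ rather than $2mp$ for each of $U_\pm$), but the difference is immaterial for the asymptotic conclusion.
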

\begin{proof}
Let  $\ell_i =|I_i|$, $i\in[m]$. Suppose  $I_i=[x, x+\ell_i-1]$ for some integer $x$. If  an edge $e$ satisfies $x+p\le t(e)<x+\ell_i-p$, then $e$ is not contained in  $U_+(t, v)\cup U_-(t, v)$ for any vertex $v$ due to that $|I_i\cap S(t, v)|\le 1$. Thus, every $I_i$ keeps at least $\ell_i-2p$ edges out of $U_+(t, v)$ and $U_-(t, v)$ for all vertices $v$. Since $\cup_{i=1}^m I_i=[\epsilon ]$, we have $\sum_{v\in V}|U_+(t, v)|\le 2mp$ and $\sum_{v\in V}|U_-(t, v)|\le 2mp$.

By Lemma~\ref{U+U-sum}, there exist different vertices $u$ and $v$ such that $|U_+(t, u)|+|U_-(t, v)|+2|B(\{u, v\})|\le {4mp}/ n+16$. Then by Lemma~\ref{lemma_robust_lower_bound}, there  exists a $p$-swap $\theta$ such that $|s(\theta t, u)-s(\theta t, v)|\ge (2n-2p -20- {4mp} /n)p-\alpha$. So $R(p, t, n)\ge (2n-2p -20- {4mp} /n)p-\alpha$. By $m=o(n^2\slash p)$, $R(p, t, n)\ge 2pn+o(pn)$ and hence $r(p, t)=1$.
\end{proof}

Next, we relax the condition of Lemma~\ref{slice_lemma} by requiring that the label sets of $\log(n)$ vertices contain no long $1$-APs, and give a lower bound on the robustness ratio.

\begin{lemma}\label{vertex_link_lemma}
Let $m\ge 0$ be an integer and $t$ be an $\alpha$-almost supermagic labeling on $K_n$. If there exist $m$ disjoint $1$-APs $I_1, I_2, \ldots I_m$ in $[\epsilon]$ with $|I_i|\geq4p$, $i\in [m]$, and
a set $T$ of at least $\log(n)$ vertices  such that the following hold for any vertex $v\in T$,
\begin{itemize}
\item[(1)] $|S(t, v)\cap (\cup_{i=1}^m I_i)|\ge n-1-\varepsilon\sqrt n$ for some constant $\varepsilon \ge 0$;
\item[(2)] $|S(t, v)\cap I_i|\le 2$ for any $i\in [m]$,
\end{itemize}
then for any integer constant $h>0$, there exists $p^*\in[p-h+1, p]$ such that $r(p, t)\ge r(p^*, t)\ge 1-\frac{(3+\varepsilon^2)}{2h}$.
\end{lemma}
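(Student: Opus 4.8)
\textbf{Proof proposal for Lemma~\ref{vertex_link_lemma}.}

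The plan is to mimic the two-step strategy behind Theorem~\ref{bou}: first show that for \emph{most} vertices in $T$ the set $U_+(t,v)\cup U_-(t,v)$ is small (for a suitable magnitude $p^*$ close to $p$), then find a good pair $u\neq v$ in $T$ for which Lemma~\ref{lemma_robust_lower_bound} yields a $p^*$-swap with huge label-sum discrepancy. The role of condition~(2) is that shattering each long $1$-AP into pieces of size $\le 2$ forces bad pairs to be rare; the role of condition~(1) is that the few edges lying outside $\cup_i I_i$ are too few to create many bad pairs either.

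First I would fix a vertex $v\in T$ and, for an integer magnitude $q$, let $P_q(v)$ be the set of pairs $\{e,e'\}\subset D(v)$ with $|t(e)-t(e')|=q$. Write $S(t,v)=S_I\sqcup S_O$ where $S_I=S(t,v)\cap(\cup_i I_i)$ and $S_O$ is the rest, so $|S_O|\le\varepsilon\sqrt n$ by~(1). A pair in $P_q(v)$ either has both ends in $S_I$, or at least one end in $S_O$; the latter type numbers at most $2|S_O|\le 2\varepsilon\sqrt n$. For the former type: within a single $I_i$, since $|S(t,v)\cap I_i|\le 2$, at most one pair from $S(t,v)\cap I_i$ can have any fixed difference; and a pair straddling two different $I_i$'s with a fixed difference $q$ is constrained because the $I_i$ are disjoint. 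Summing a fixed-difference count over $q\in[p-h+1,p]$, I would argue $\sum_{q=p-h+1}^{p}|P_q(v)|\le (3+\varepsilon^2)\tfrac n2 + O(h)$ or a bound of this shape, so by pigeonhole there is a magnitude $p^*=p^*(v)\in[p-h+1,p]$ with $|P_{p^*}(v)|$ small. To get a \emph{single} $p^*$ working for two vertices simultaneously, I would instead sum over all $v\in T$ and over $q\in[p-h+1,p]$, then double-average: there exists one magnitude $p^*\in[p-h+1,p]$ such that $\sum_{v\in T}|P_{p^*}(v)|\le \frac{1}{h}\sum_v\sum_q|P_q(v)|$, which is at most roughly $|T|\cdot\frac{(3+\varepsilon^2)n}{2h}$.

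Next, with this fixed $p^*$, I would run the argument of Lemma~\ref{U+U-sum} restricted to $T$: note $|U_+(t,v)|+|U_-(t,v)|\le 2|P_{p^*}(v)|$ (each edge of $U_\pm$ belongs to a $p^*$-difference pair inside $D(v)$), and $|B(\{u,v\})|$ for $u,v\in T$ is controlled because every $\{u,v\}$-bad pair of type I gives a $p^*$-difference pair, while type II bad pairs with difference $2p^*$ can be handled by the same shattering bound applied at difference $2p^*$ (here I may need $|I_i|\ge 4p$ rather than $2p$, which is exactly why the hypothesis asks for $4p$). A counting-by-contradiction over the $\binom{|T|}{2}$ pairs in $T$ then produces $u\neq v$ in $T$ with
\[
|U_+(t,u)|+|U_-(t,v)|+2|B(\{u,v\})|\ \le\ \frac{(3+\varepsilon^2)n}{h}+O\!\left(\frac{n}{|T|}\right)+O(1).
\]
Since $|T|\ge\log n\to\infty$, the error terms are $o(n)$, so this is $\le \frac{(3+\varepsilon^2)n}{h}+o(n)$.

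Finally I would feed this pair into Lemma~\ref{lemma_robust_lower_bound} with $\ell=\frac{(3+\varepsilon^2)n}{h}+o(n)$ and magnitude $p^*$, obtaining a $p^*$-swap $\theta$ with
\[
|s(\theta t,u)-s(\theta t,v)|\ \ge\ p^*\!\left(2n-2p^*-4-\ell\right)-|s(t,u)-s(t,v)|.
\]
Dividing by $2p^*n$, using $p^*=o(n)$, $\alpha=O(n)$ so $|s(t,u)-s(t,v)|\le\alpha=o(p^*n)$, and $\ell=\frac{(3+\varepsilon^2)n}{h}+o(n)$, the right side tends to $1-\frac{3+\varepsilon^2}{2h}$, giving $r(p^*,t)\ge 1-\frac{3+\varepsilon^2}{2h}$. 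The inequality $r(p,t)\ge r(p^*,t)$ follows from the stated monotonicity of the robustness ratio in $p$ (the same fact used at the end of the sketch in the Overview), completing the proof. The main obstacle I anticipate is the careful bookkeeping in the bound on $\sum_v\sum_q|P_q(v)|$ — in particular controlling straddling pairs between two distinct $I_i$'s and getting the constant $3+\varepsilon^2$ exactly right, together with ensuring the type-II bad pairs (difference $2p^*$) are absorbed by the $|I_i|\ge 4p$ hypothesis rather than breaking the shattering estimate.
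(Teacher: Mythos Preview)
Your proposal is essentially the paper's proof: bound, for each $v\in T$, the number of close-difference pairs inside $S(t,v)$ using the shattering hypothesis, pigeonhole to extract a single magnitude $p^*\in[p-h+1,p]$, then average to find a good pair $u,v$ and invoke Lemma~\ref{lemma_robust_lower_bound}. The paper does the pigeonholing in two stages (first per vertex to get $p'(v)$, then over $T$ to find $\ge\log(n)/h$ vertices sharing one $p^*$) rather than a single double average, but this is cosmetic.

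Two small points where your write-up drifts from the paper. First, the hypothesis $|I_i|\ge 4p$ is used in the bound on $|P_0|$ (pairs with both ends in $S_I$): it guarantees that the window $[s-p,s+p]$ of length $2p+1$ cannot swallow an entire $I_i$, hence meets at most two of them, giving $|S_I\cap[s-p,s+p]|\le 4$ and $|P_0|\le\tfrac32 n$. It is \emph{not} used to control type~II bad pairs. Second, your sketch for bounding $|B(\{u,v\})|$ via ``shattering at difference $2p^*$'' is off, since a $\{u,v\}$-bad pair has one edge in $D(u)$ and one in $D(v)$, not both in a single $D(\cdot)$. The paper instead uses the trivial global bound $\sum_{w\ne v_1}|B(\{v_1,w\})|\le 8n$ (each of the $\le 4n$ bad pairs through $v_1$ lies in at most two $B(\{v_1,\cdot\})$) and then pigeonholes over the $\ge\log(n)/h$ available partners to get $|B(\{v_1,v_2\})|\le 8hn/\log n=o(n)$, which is exactly your $O(n/|T|)$ term.
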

\begin{proof}
Consider any vertex $v\in T$. Divide $S(t, v)$ into two parts: the set of values in those intervals, denoted by $S_I= S(t,v) \cap (\cup_{i=1}^m I_i)$; and the left values out of those intervals, denoted by $S_O= S(t,v)\backslash S_I$. By (1), $|S_O|\le \varepsilon\sqrt n$.

Let $P$ denote the set of all pairs in $ S(t,v)$ with difference at most $p$, and then each pair in  $P$ intersects $S_O$ at most two edges. Let $P_i=\{A\in P: |A\cap S_O|=i \}$ for  $i\in [0,2]$.  Next, we compute the size of each $P_i$. For any $s\in S(t,v)$, denote $I(s)=[s-p, s+p]$. Since each interval $I_i$ has length at least $4p$, the interval $I(s)$ intersects at most two intervals $I_i$. By (2), we have $S_I\cap I(s)\le 4$ for each $s\in S(t,v)$. Then
 \[|P_0|\le \frac12 \sum_{s\in S_I}(|S_I\cap I(s)|-1) \le \frac32 n\]
by considering that for each $s\in S_I$ there are $|S_I\cap I(s)|-1$ numbers in $S_I$ who differ from $s$ at most $p$. Similarly $|P_1|\le \sum_{s\in S_O}|S_I\cap I(s)|\le 4\varepsilon\sqrt n$, and $|P_2|<\frac12 |S_O|^2=\frac{\varepsilon^2}2n$. Then $|P|=|P_0|+|P_1|+|P_2|<\frac{\varepsilon^2}2 n+4\varepsilon \sqrt n +\frac 3 2 n$.


By abuse of notation, when $t$ is explicit, we denote $U_+(t, v)$ and $U_-(t, v)$ under some magnitude $p$ as $U_+(p, v)$ and $U_-(p, v)$. By definition of $P$, we have \[\sum_{p'\le p}U_+(p', v)+\sum_{p'\le p}U_-(p', v)\le 2 |P| <(3+\varepsilon^2)n +8\varepsilon\sqrt n.\] Then by pigeon-hole principle, for any integer $h>0$, there exists some $p'(v)\in [p-h+1, p]$ such that \[U_+(p'(v), v)+U_-(p'(v), v)<\frac{(3+\varepsilon^2)}hn +\frac{8\varepsilon}h \sqrt n.\]

Consider $p'(v)$ for all vertices $v$ in $T$, and at least $\frac {log(n)} h$ of them are coincident. Denote the set of those vertices in $T$ by $T'=\{v_1, v_2, \ldots, v_x\}$ for some integer $x\ge \frac {log(n)} h$, and denote the common magnitude by $p^*$. For $v_1$, there are at most $4n$ bad pairs (of any type) of $\{e, e'\}\subset E(K_n)$ under $p^*$ containing $v_1$, and each pair can be in at most two $B(\{v_1, v\})$ for some $v\ne v_1$. So $\sum_{v\in V\backslash \{v_1\}}|B(\{v_1, v\})|\le 8n$. By pigeon-hole principle, there exists another vertex in $T'$, say $v_2$, such that $|B(\{v_1, v_2\})|\le \frac{8hn}{log(n)}$. Finally, we find out a pair of different vertices $v_1$ and $v_2$ in $T'$, such that under magnitude $p^*$, \[|U_+(t, v_1)|+|U_-(t, v_1)|+ |U_+(t, v_2)|+ |U_-(t, v_2)|+4|B(\{v_1, v_2\})|<2(\frac{(3+\varepsilon^2)}hn +\frac{8\varepsilon}h \sqrt n)+\frac{32hn}{log(n)}.\]

Without loss of generality, assume that $|U_+(t, v_1)|+|U_-(t, v_2)|+2|B(\{v_1, v_2\})| <\frac{(3+\varepsilon^2)}hn +\frac{8\varepsilon}h \sqrt n+\frac{16hn}{log(n)}$ under magnitude $p^*$. By Lemma~\ref{lemma_robust_lower_bound}, $R(p^*, t, n)\ge (2n-4-2p^*-\frac{(3+\varepsilon^2)}hn -\frac{8\varepsilon}h \sqrt n-\frac{16hn}{log(n)})p^*-\alpha$. Since $\varepsilon, h$ are constants, $p$ is admissible, and $\alpha=o(p^*n)$, we have $r(p^*, t)\ge 1-\frac{(3+\varepsilon^2)}{2h}$. The proof is completed since $r(p,t)$ is non-decreasing with $p$.
\end{proof}

The following corollary is a special case of Lemma~\ref{vertex_link_lemma} by setting $T=V$ and $\varepsilon=0$.

\begin{corollary}\label{link_corollary}
Let $m\ge 0$ be an integer and $t$ be an $\alpha$-almost supermagic labeling on $K_n$. If there exists a decomposition of $[\epsilon]$ into $m$ pieces of disjoint $1$-APs $I_1, I_2, \ldots I_m$ with lengths no smaller than $4p$, such that for any vertex $v$, $|S(t, v)\cap I_i|\le 2$ for any $i\in [m]$, then for any integer constant $h>0$, there exists a new magnitude $p^*\in[p-h+1, p]$ such that $r(p^*, t)\ge 1-\frac 3 {2h}$.
\end{corollary}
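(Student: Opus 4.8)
The plan is to obtain Corollary~\ref{link_corollary} as an immediate specialization of Lemma~\ref{vertex_link_lemma}, choosing the vertex set $T$ to be all of $V$ and the constant $\varepsilon$ to be $0$. First I would check that the cardinality requirement on $T$ is satisfied: here $|T|=|V|=n\ge \log(n)$ for every $n$, so taking $T=V$ is legitimate. The $1$-APs $I_1,\dots,I_m$ supplied by the hypothesis of the corollary already have lengths at least $4p$, matching the corresponding requirement in Lemma~\ref{vertex_link_lemma}.

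Next I would verify the two pointwise conditions of Lemma~\ref{vertex_link_lemma} for every $v\in V$ with $\varepsilon=0$. For condition~(1): since $I_1,\dots,I_m$ is a decomposition of $[\epsilon]$, we have $\bigcup_{i=1}^m I_i=[\epsilon]$, and since $S(t,v)\subseteq[\epsilon]$ with $|S(t,v)|=d(v)=n-1$, it follows that $|S(t,v)\cap(\bigcup_{i=1}^m I_i)|=n-1\ge n-1-0\cdot\sqrt n$, which is exactly~(1) with $\varepsilon=0$. Condition~(2), that $|S(t,v)\cap I_i|\le 2$ for all $i\in[m]$, is literally the hypothesis of the corollary. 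Hence all hypotheses of Lemma~\ref{vertex_link_lemma} hold with the parameters $T=V$, $\varepsilon=0$.

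Applying Lemma~\ref{vertex_link_lemma} then gives, for any integer constant $h>0$, a magnitude $p^*\in[p-h+1,p]$ with $r(p^*,t)\ge 1-\frac{(3+\varepsilon^2)}{2h}=1-\frac{3}{2h}$, which is precisely the claimed bound. Since the argument is a pure specialization, there is essentially no obstacle here; the only points needing a (trivial) check are that the \emph{decomposition} hypothesis forces $\bigcup_i I_i=[\epsilon]$, so that $S(t,v)$ is entirely contained in the union of the $I_i$, and that $n\ge\log(n)$ — both of which are immediate. All of the real work has already been done inside Lemma~\ref{vertex_link_lemma}.
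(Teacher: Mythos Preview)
Your proposal is correct and matches the paper's approach exactly: the corollary is stated as a special case of Lemma~\ref{vertex_link_lemma} with $T=V$ and $\varepsilon=0$, and your verification of conditions (1) and (2) is precisely what is needed.
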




There are mainly two kinds of constructions for supermagic or almost supermagic labelings on $K_n$ appeared in \cite{stewart_1967, Colbourn2021}. Due to their construction strategies, we call them factorial constructions \cite{Colbourn2021}  and inductive constructions \cite{stewart_1967}. By applying Lemmas~\ref{slice_lemma}, ~\ref{vertex_link_lemma} and Corollary~\ref{link_corollary}, we show that neither constructions are  robust. The following is an example of factorial constructions, while the complete analysis is put in Appendix~\ref{sec-old_constructions}.


\begin{example}\label{exg} The following construction was given in \cite[Lemma 3]{Colbourn2021}. Let $n=4s+2$ for some positive integer $s$, and let the vertex set of $K_n$
be $\mathbb Z_{4s+1}\cup\{\infty\}$. Then $K_n$ has a decomposition into $4s+1$ $1$-factors $\{\{\infty, i\}\}\cup \{\{j+i, 4s+1-j+i\}:j\in[s]\}\cup \{\{j-1+i, 4s+2-j+i\}: j\in[s+2, 2s+1]\}$, $i\in [0,4s]$.
%

For any $i\in [0, 4s]$, assign the labels in $[i(2s+1)+1, (i+1)(2s+1)]$ to edges in the $i$th $1$-factor of $K_n$ as follows:
$$
\begin{cases}
t(\{j+i, 4s+1-j+i\})=j+i(2s+1)\text{ for }j\in[s];\\
t(\{\infty, i\})=s+1+i(2s+1);\\
t(\{j-1+i, 4s+2-j+i\})=j+i(2s+1)\text{ for }j\in[s+2, 2s+1].
\end{cases}
$$
From~\cite{Colbourn2021}, the labeling $t$ is supermagic and the label sum for each vertex is $(4s+1)(4s^2+ 3s+1)$.

Consider the $p$-swap robustness of  $t$ for any admissible $p$.  Set $m=4s+1$. So $m=o(n^2\slash p)$. For any $i\in [4s+1]$, define $I_i=[(i-1)(2s+1)+1, i(2s+1)]$. For any vertex $v$, by the factor decomposition structure, $|I_i\cap S(t, v)|=1$ for any $i\in [m]$, and hence by Lemma~\ref{slice_lemma}, we have $r(p, t)=1$.

\end{example}

\subsection{Long $1$-APs are Robust under $p$-Swaps}\label{sec-mb}

The following lemma shows that if a label set forms a $1$-AP of size at least $2p$, then its label sum  difference changed  by a $p$-swap is at most $p^2$.

\begin{lemma}\label{1-AP_bridge_lemma}
Let $t$ be an edge labeling on a graph $K_n$. If an edge set $F$ with $|F|\ge 2p$ satisfies that $S(t, F)$ forms a $1$-AP, then for any $p$-swap $\theta$ on $t$, $|s(\theta t, F)-s(t, F)|\le p^2$.
\end{lemma}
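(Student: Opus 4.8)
The plan is to isolate exactly which edges can alter the label sum of $F$ under a $p$-swap, to show that those edges come in matched low-side and high-side families, and then to finish with a short telescoping estimate.

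First I would set $S(t,F)=[a,a+k-1]$ with $k=|F|\ge 2p$, so that $F=\{e: t(e)\in[a,a+k-1]\}$, and introduce the \emph{destination set} $G=\{e:\theta t(e)\in[a,a+k-1]\}$. Since $\theta t$ is a bijection onto $[\epsilon]$, $S(\theta t,G)=[a,a+k-1]$, hence $s(\theta t,G)=\sum_{j=a}^{a+k-1}j=s(t,F)$. Partitioning $F$ and $G$ into the common part $F\cap G$ together with the escaping edges $F\setminus G$ and the entering edges $G\setminus F$, and cancelling $F\cap G$, yields
\[
s(\theta t,F)-s(t,F)=s(\theta t,F\setminus G)-s(\theta t,G\setminus F).
\]
Because $|t(e)-\theta t(e)|\le p$ and $k\ge 2p$, an edge of $F$ can leave $[a,a+k-1]$ only through the disjoint bottom window $t(e)\in[a,a+p-1]$ (landing in $[a-p,a-1]$) or the top window $t(e)\in[a+k-p,a+k-1]$ (landing in $[a+k,a+k+p-1]$), and symmetrically for entering edges; this splits $F\setminus G=F_L\sqcup F_H$ and $G\setminus F=G_L\sqcup G_H$.

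The key structural point is that $|F_L|=|G_L|$ and $|F_H|=|G_H|$. This follows from a conservation-of-crossings identity: for any threshold $x$, since $t$ and $\theta t$ are both bijections onto $[\epsilon]$ we have $|\{e:t(e)\ge x\}|=|\{e:\theta t(e)\ge x\}|$, and subtracting the common count $|\{e:t(e)\ge x,\ \theta t(e)\ge x\}|$ gives $|\{e:t(e)<x\le\theta t(e)\}|=|\{e:\theta t(e)<x\le t(e)\}|$. Applying this with $x=a$ identifies the two sides with exactly $G_L$ and $F_L$ (the magnitude constraint forces the relevant labels into $[a,a+k-1]$ on the appropriate side), and with $x=a+k$ identifies them with $F_H$ and $G_H$. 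With these equalities, $s(\theta t,F_H)-s(\theta t,G_H)$ is a difference of two equal-size sets of distinct integers drawn from $[a+k,a+k+p-1]$ and $[a+k-p,a+k-1]$; its extreme values are attained by the extreme packings and equal $f_H(2p-f_H)\le p^2$ and $f_H^2\ge 0$, so it lies in $[0,p^2]$. Likewise $s(\theta t,F_L)-s(\theta t,G_L)\in[-p^2,0]$. Adding the two displayed contributions gives $|s(\theta t,F)-s(t,F)|\le p^2$.

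The main obstacle — and the reason a naive estimate fails — is that $[a,a+k-1]$ may sit anywhere in $[\epsilon]$ and be arbitrarily long, so a crude bound on the escaping and entering contributions carries uncontrolled terms proportional to $a$ and to $k$. These are killed precisely by the cardinality matching $|F_L|=|G_L|$ and $|F_H|=|G_H|$: once escaping and entering edges are paired on each side, only the bounded displacements survive (at most $p$ edges per side, each moved by at most $2p$), and the remainder is routine interval arithmetic.
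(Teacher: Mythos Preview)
Your proof is correct and complete; in particular the crossing-count identity cleanly gives $|F_L|=|G_L|$ and $|F_H|=|G_H|$, and the boundary cases $a\le p$ or $a+k+p-1>\epsilon$ are handled automatically (the corresponding sets simply shrink).

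The paper takes a slightly different route. Rather than introducing the destination set $G$ and tracking escaping versus entering edges, it analyzes the image $S(\theta t,F)$ directly: using that $\theta t$ is a bijection and that the complements $E_1=t^{-1}([1,a-1])$ and $E_2=t^{-1}([a+k,\epsilon])$ can only land in $[1,a+p-1]$ and $[a+k-p,\epsilon]$ respectively, it shows that exactly $p$ of the $\theta t$-labels of $F$ lie in each boundary window $[a-p,a+p-1]$ and $[a+k-p,a+k+p-1]$, while the remaining $k-2p$ labels fill precisely the middle interval $[a+p,a+k-p-1]$. The extremal sums are then read off by pushing the $p$ labels in each window to their extreme positions. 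Your crossing identity and the paper's partition-and-count are two faces of the same bijection argument; your version is a bit more modular (the crossing identity is a standalone fact reusable elsewhere) and handles the edge-of-$[\epsilon]$ cases without a separate assumption, whereas the paper's is slightly more direct and avoids the auxiliary set $G$.
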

\begin{proof}Denote $\ell=|F|\geq 2p$. Suppose that $S(t, F)=[x,   y]$ with $y-x+1= \ell$.
For simplicity, we assume that $x-p>0$ and $y+p\le \epsilon$. For any $p$-swap $\theta$, it is clear that $S(\theta t, F)\subset [x-p, y+p]$.

Partition $E(G)$ into three parts, $F$, $E_1$ and $E_2$, where $S(t, E_1)=[x-1]$ and $S(t, E_2)=[y+1,\epsilon]$. Then $S(\theta t, E_1)\subset [x+p-1]$, and $S(\theta t, E_2)\subset [y+1-p, \epsilon]$.
So $|S(\theta t, F)\cap [x-p, x+p-1]|=p$,  $|S(\theta t, F)\cap [y+1-p, y+p]|=p$, and the label set of the rest $\ell-2p$ edges of $F$ under $\theta t$ forms exactly $[x+p, y-p]$.


By our analysis, the possible minimum value of $s(\theta t, F)$ is attained when $S(\theta t, F)\cap [x-p, x+p-1]=[x-p, x-1]$ and $S(\theta t, F)\cap [y+1-p, y+p]=[y+1-p, y]$. So, $s(\theta t, F)\ge\sum_{i=1}^p (x-i) + \sum_{j=x+p}^{y}j = s(t, F)-p^2$. Similarly, the possible maximum value is attained when $S(\theta t, F)\cap [x-p, x+p-1]=[x, x+p-1]$ and $S(\theta t, F)\cap [y+1-p, y+p]=[y+1, y+p]$. So $s(\theta t, F)\le\sum_{i=1}^p (y+i) + \sum_{j=x}^{y-p}j =  s(t, F)+p^2$. From both sides, $|s(\theta t, F)-s(t, F)|\le p^2$.
\end{proof}

By Lemma~\ref{1-AP_bridge_lemma}, in order to construct a labeling which is robust under $p$-swaps, we need to fill each $S(t, v)$ by disjoint long $1$-APs. We give the formal definition below.

\begin{definition}
For a given labeling $t$, vertex $v$ and magnitude $p$, we say $S(t, v)$ is \emph{of $(m, \ell)_p$ type for some integers $m$ and $\ell$}, if there exist at most $m$ disjoint $1$-APs in $S(t, v)$ each of length at least $2p$, such that the sum of their lengths is at least $\ell$. Furthermore, we say those disjoint $1$-APs \emph{witness} an $(m, \ell)_p$ type for $v$. If for each vertex $v$, $S(t, v)$ is of $(m, \ell)_p$ type, then we say the edge labeling $t$ is of $(m, \ell)_p$ type.
\end{definition}

\begin{example}
If $p=2$ and $t$ is a labeling on $K_{10}$ with vertex set $\{v_1, v_2, \ldots, v_{10}\}$ such that for any $e_i\triangleq v_iv_{10}$, $i\in [9]$,
$$t(e_1)=8; t(e_2)=21; t(e_3)=7; t(e_4)=23; t(e_5)=41; t(e_6)=22; t(e_7)=6; t(e_8)=5; t(e_9)=24,$$
then $S(t, v_{10})$ is of $(2, 8)_2$ type with corresponding witness $1$-APs $[5, 8]$ and $[21, 24]$. Moreover, the preimages of those two $1$-APs are $\{e_1, e_3, e_7, e_8\}$ and $\{e_2, e_4, e_6, e_9\}$, respectively.
\end{example}
 Under these definitions, we have the following lemma.

\begin{lemma}\label{1-AP_dominent_lemma}Let $m$, $p$ and $\ell$ be positive integers.
If $t$ is an edge labeling on $K_n$ of $(m, \ell)_p$ type,
then for any different vertices $u$, $v$ and $p$-swap $\theta$,
$$|s(\theta t, u)-s(\theta t, v)|\le |s(t, u)-s(t, v)|+2mp^2+2p(n-\ell-1).$$
\end{lemma}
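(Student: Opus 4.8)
The plan is to decompose the edge sets $D(u)$ and $D(v)$ according to whether an edge's label lies inside one of the witnessing $1$-APs, and then bound the change in label sum contributed by each part separately, using Lemma~\ref{1-AP_bridge_lemma} for the long-$1$-AP parts and the crude per-edge bound $|t(e)-\theta t(e)|\le p$ for everything else. First I would fix vertices $u\ne v$ and a $p$-swap $\theta$, and note that since $t$ is of $(m,\ell)_p$ type, for $u$ there are disjoint $1$-APs $J_1^u,\dots,J_{m}^u$ (possibly fewer than $m$, but we may pad with empty ones) contained in $S(t,u)$, each of length at least $2p$, with $\sum_j |J_j^u|\ge \ell$; write $F_j^u$ for the preimage under $t$ of $J_j^u$, so the $F_j^u\subseteq D(u)$ are disjoint edge sets. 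Similarly define $J_j^v$ and $F_j^v\subseteq D(v)$. Set $F^u=\bigcup_j F_j^u$ and $F^v=\bigcup_j F_j^v$, so $|D(u)\setminus F^u|\le (n-1)-\ell$ and likewise for $v$.

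Next I would write $s(\theta t,u)-s(t,u) = \sum_{j}\bigl(s(\theta t,F_j^u)-s(t,F_j^u)\bigr) + \sum_{e\in D(u)\setminus F^u}\bigl(\theta t(e)-t(e)\bigr)$, and do the same for $v$. Each $F_j^u$ has $S(t,F_j^u)$ a $1$-AP of size $\ge 2p$, so Lemma~\ref{1-AP_bridge_lemma} gives $|s(\theta t,F_j^u)-s(t,F_j^u)|\le p^2$; summing over the (at most $m$) nonempty $J_j^u$ gives a contribution bounded by $mp^2$. The remaining at most $(n-1)-\ell$ edges of $D(u)\setminus F^u$ each change by at most $p$ in absolute value since $\theta$ is a $p$-swap, contributing at most $p(n-1-\ell)$. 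Hence $|s(\theta t,u)-s(t,u)|\le mp^2 + p(n-1-\ell)$, and identically for $v$. Combining via the triangle inequality,
\[
|s(\theta t,u)-s(\theta t,v)|\le |s(t,u)-s(t,v)| + 2mp^2 + 2p(n-\ell-1),
\]
which is the claimed bound.

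I do not expect a genuine obstacle here; the only points requiring a little care are bookkeeping ones. One must make sure the $1$-APs witnessing the type have length at least $2p$ so that Lemma~\ref{1-AP_bridge_lemma} applies on the nose (this is exactly built into the definition of $(m,\ell)_p$ type), and one should be slightly careful with the edge $uv$: it lies in both $D(u)$ and $D(v)$ but cancels when we take the difference $s(\theta t,u)-s(\theta t,v)$, or alternatively one simply absorbs it into the ``$n-1$'' count since the per-edge bound $p$ applies to it as well — either way the stated bound $2p(n-\ell-1)$ is not tight and the slack comfortably covers this. A final minor point: Lemma~\ref{1-AP_bridge_lemma} as stated assumes $x-p>0$ and $y+p\le\epsilon$ for the $1$-AP $[x,y]$; near the ends of $[\epsilon]$ one should observe that the same argument (with the obvious one-sided truncation) still yields the $\le p^2$ bound, so this boundary assumption is harmless. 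The core estimate is otherwise immediate.
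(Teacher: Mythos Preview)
Your proposal is correct and follows essentially the same approach as the paper: decompose $D(u)$ and $D(v)$ into the witnessing $1$-AP parts and the leftover edges, apply Lemma~\ref{1-AP_bridge_lemma} to each $1$-AP part, and use the crude bound $p$ per edge on the remainder. The only cosmetic difference is that the paper argues via a representative case (e.g.\ $s(\theta t,u)\ge s(t,u)\ge s(t,v)\ge s(\theta t,v)$) rather than the triangle inequality, but the substance is identical.
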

\begin{proof}
We give the proof for the case when $s(\theta t, u)\ge s(t, u)\ge s(t, v)\ge s(\theta t, v)$. For other cases, the proofs are similar.

Suppose that $1$-APs $I_1, I_2, \ldots, I_{m_1}$ in $S(t, u)$, and $1$-APs $I_1', I_2', \ldots, I_{m_2}'$ in  $S(t, v)$ witness an  $(m, \ell)_p$ type for $u$ and $v$, respectively, for some $m_1,m_2\leq m$.  Define the corresponding sets of edges by $E_i$ and $E_i'$. Then each set is of size at least $2p$, and  each  union $\cup_{i=1}^{m_1}E_i$ and $\cup_{i=1}^{m_2}E_i'$ is of size at least $\ell$. {
Let $D_O=D(u)\backslash(\cup_{i=1}^{m_1}E_i)$.
Then by Lemma~\ref{1-AP_bridge_lemma}
\begin{equation*}
\begin{array}{rl}
s(\theta t, u)=& \sum_{i=1}^{m_1}s(\theta t, E_i)+s(\theta t, D_O)\\
\le & \sum_{i=1}^{m_1}s(t, E_i)+mp^2 +s(t, D_O)+p|D_O|\\
\le & s(t, u)+mp^2+ p(n-\ell-1).
\end{array}
\end{equation*}
By the same analysis, $s(\theta t, v)\ge s(t, v)-mp^2-p(n-\ell-1)$. So, $s(\theta t, u)-s(\theta t, v)\le s(t, u)-s(t, v)+2mp^2+2p(n-\ell-1)$.}
\end{proof}

By Lemma~\ref{1-AP_dominent_lemma}, less long $1$-APs with larger sum length in each $S(t, v)$ leads to better robustness. The following is immediate by setting $m$ a constant and $\ell$ linear with $n$.

\begin{theorem}\label{cor-ml} Let $m$ be a constant integer, $\ell=\beta n$ with $0<\beta<1$ and $\alpha=O(n)$. If $t$ is an  $\alpha$-almost supermagic labeling  on $K_n$ of $(m, \ell)_p$ type, then $r(p,t)\leq 1-\beta$.
\end{theorem}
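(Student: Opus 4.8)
The plan is to combine Lemma~\ref{1-AP_dominent_lemma} with the hypothesis that $t$ is $\alpha$-almost supermagic, and then take the appropriate limit in the definition of $r(p,t)$. First I would fix two arbitrary distinct vertices $u,v$ of $K_n$ and an arbitrary $p$-swap $\theta$. Since $t$ is $\alpha$-almost supermagic we have $|s(t,u)-s(t,v)|\le\alpha$, and since $t$ is of $(m,\ell)_p$ type with $\ell=\beta n$, Lemma~\ref{1-AP_dominent_lemma} gives
\[
|s(\theta t,u)-s(\theta t,v)|\le \alpha+2mp^{2}+2p\bigl(n-\beta n-1\bigr).
\]
Because this bound is uniform over all pairs $u\ne v$ and over all $p$-swaps $\theta$, it is exactly an upper bound on $R(p,t,n)$, so
\[
R(p,t,n)\le \alpha+2mp^{2}+2p(1-\beta)n-2p.
\]

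Next I would divide through by $2pn$ and estimate each term as $n\to\infty$. Since $\alpha=O(n)$ and $p$ is admissible (so $p\to\infty$), we have $\alpha/(2pn)=O(1/p)\to 0$. Since $m$ is a constant and $p=o(n)$, the term $2mp^{2}/(2pn)=mp/n\to 0$. The term $2p(1-\beta)n/(2pn)=1-\beta$ is exactly the target, and the remaining $-2p/(2pn)=-1/n\to 0$. Hence $\varlimsup_{n\to\infty} R(p,t,n)/(2pn)\le 1-\beta$, i.e. $r(p,t)\le 1-\beta$, which is the claim.

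There is essentially no obstacle here: the theorem is a direct corollary of Lemma~\ref{1-AP_dominent_lemma} once the asymptotic bookkeeping is done, and the only points needing a word of care are (i) noting that the bound of Lemma~\ref{1-AP_dominent_lemma} holds uniformly so that it bounds the max-over-swaps quantity $R(p,t,n)$, and (ii) using the standing assumptions that $p$ is admissible and $\alpha=O(n)$ to kill the lower-order terms $\alpha$ and $2mp^2$ after dividing by $2pn$. If one wanted to be slightly more careful one could also observe that $\ell=\beta n$ need only be an integer for infinitely many $n$, which suffices for the $\varlimsup$; but since the statement already posits $\ell=\beta n$ this is not an issue worth dwelling on.
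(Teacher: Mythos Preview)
Your proposal is correct and follows exactly the paper's own approach: apply Lemma~\ref{1-AP_dominent_lemma}, bound $|s(t,u)-s(t,v)|$ by $\alpha$ using the almost-supermagic hypothesis, and then divide by $2pn$ and use the admissibility of $p$ together with $\alpha=O(n)$ to kill the lower-order terms. The paper in fact states the theorem as ``immediate'' from Lemma~\ref{1-AP_dominent_lemma} with $m$ constant and $\ell$ linear in $n$, so your more explicit asymptotic bookkeeping is just a fleshed-out version of what the paper leaves to the reader.
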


To construct examples of $\alpha$-supermagic labelings   on $K_n$ of $(m, \beta n)_p$ type for some constant $\beta$, we need a lot of preparatory work. So we defer the constructions in the next section. For example,  the labeling $\tau_{8q}$ defined in Eq.~(\ref{primary_construction}) is an $\frac{n}{2}$-almost supermagic labeling on $K_{8q}$ of  $(2, 2q)_p$ type for any $p\le q/2$.

\section{Direct Constructions}\label{sec-main_construction}

In this section, we give direct constructions of $\alpha$-almost supermagic labelings with bounded robustness ratio. From Theorem~\ref{cor-ml}, labelings of $(m, \ell)_p$ type with constant $m$ and $\ell=\Theta(n)$, that is,  disjoint $1$-APs with large length in each $S(t, v)$, have a $p$-robustness ratio away from $1$. In Subsection~\ref{sec-weaving}, we show how to distribute long $1$-APs to each $S(t, v)$ efficiently by using the so-called weaving squares.  In Subsection~\ref{sec-astray_good}, we introduce a stronger structure than $\alpha$-almost supermagic labeling, the so-called $b$-astray good labeling, and directly construct a $1$-astray good labeling of $(2, 2q)_p$ type. Such a structure is very useful in the recursive constructions developed in the next section.



\subsection{Weaving Squares}\label{sec-weaving}


For any positive integer $q$, we first define a {\it little square} $L_q$ as a $q\times q$ square, such that for any $(i, j)\in [q]\times [q]$, $L_q(i, j)=q(i-1)+j$. Let $L^0_q$, $L^1_q$, $L^2_q$ and $L^3_q$ be four squares obtained by rotating $L_q$ with angels $0$, $\frac \pi 2$, $\pi$ and $\frac {3\pi} 2$, respectively. See Table~\ref{t1} for an example. In fact, we can use an arrow to indicate a little square with that direction, that is, use $\rightarrow$, $\downarrow$, $\leftarrow$, $\uparrow$ to stand for $L^0_q$, $L^1_q$, $L^2_q$ and $L^3_q$, respectively.  One can see that each $L^i_q$ contains $q$ disjoint $1$-APs of length $q$ along with the corresponding direction.
\begin{table*}[h]
\centering
\caption{Four little squares of order $3$}\label{t1}
\begin{tabular}{c|c|c|c|c|c|c|c|c|c|c|c|c|c|c|c|}
\cline{2-4} \cline{6-8} \cline{10-12} \cline{14-16}
 &1&2&3& &7&4&1& &9&8&7& &3&6&9\\
\cline{2-4} \cline{6-8} \cline{10-12} \cline{14-16}
$L^0_3$:&4&5&6&$L^1_3$:&8&5&2&$L^2_3$:&6&5&4&$L^3_3$:&2&5&8\\
\cline{2-4} \cline{6-8} \cline{10-12} \cline{14-16}
 &7&8&9& &9&6&3& &3&2&1& &1&4&7\\
\cline{2-4} \cline{6-8} \cline{10-12} \cline{14-16}
\end{tabular}
\end{table*}

Next, we construct a square $B$ of order four as follows, which we call the {\it base square}. Note that all integers from $1$ to $16$ appear in $B$ exactly once, and the smaller integers ($\leq$ 8) are located in the upper-left and lower-right $2\times2$ subsquares. Further, the row sums and column sums of $B$ are all the same. {\color{red}}
\begin{center}
\begin{tabular}{c|c|c|c|c|}
\cline{2-5}
\multirow{4}*{$B$:}&1&6&11&16\\
\cline{2-5}
 &7&4&13&10\\
\cline{2-5}
 &12&15&2&5\\
\cline{2-5}
 &14&9&8&3\\
 \cline{2-5}
\end{tabular}
\end{center}

Finally,  we define our {\it weaving square} $W_{4q}$ with order $4q$ as follows. In fact, the weaving square $W_{4q}$ is obtained by replacing each entry of $B$ with a suitable little square of order $q$ and lifting each entry of this sub-square with a constant magnitude so that all entries of  $W_{4q}$ are different. The way of inputting the little squares is illustrated as follows, where each arrow means a little square with that direction.

\begin{center}
\begin{tabular}{|c|c|c|c|}
\hline
$\rightarrow$&$\downarrow$&$\leftarrow$&$\uparrow$\\
\hline
$\uparrow$& $\rightarrow$&$\downarrow$&$\leftarrow$ \\
\hline
 $\leftarrow$&$\uparrow$& $\rightarrow$&$\downarrow$\\
\hline
$\downarrow$&$\leftarrow$&$\uparrow$&$\rightarrow$\\
\hline
\end{tabular}
\end{center} We give a formal definition below, and see Table~\ref{t2} for an example.

\begin{definition}\label{definition_weaving_matrix}
For any $(i, j)\in [4q]\times [4q]$, there exist unique integers $i_1, j_1\in[4]$ and $i_2, j_2\in [q]$, such that $i=q(i_1-1)+i_2$, and $j=q(j_1-1)+j_2$. Define
$$W_{4q}(i, j)=(B(i_1, j_1)-1)q^2+L^{(j_1-i_1\mod 4)}_q(i_2, j_2).$$
\end{definition}

\begin{table*}[h]
\centering
\caption{The weaving square $W_{12}$}\label{t2}
\footnotesize{
\begin{tabular}{|c|c|c|c|c|c|c|c|c|c|c|c|}
\hline
1&2&3&52&49&46&99&98&97&138&141&144\\
\hline
4&5&6&53&50&47&96&95&94&137&140&143\\
\hline
7&8&9&54&51&48&93&92&91&136&139&142\\
\hline
57&60&63&28&29&30&115&112&109&90&89&88\\
\hline
56&59&62&31&32&33&116&113&110&87&86&85\\
\hline
55&58&61&34&35&36&117&114&111&84&83&82\\
\hline
108&107&106&129&132&135&10&11&12&43&40&37\\
\hline
105&104&103&128&131&134&13&14&15&44&41&38\\
\hline
102&101&100&127&130&133&16&17&18&45&42&39\\
\hline
124&121&118&81&80&79&66&69&72&19&20&21\\
\hline
125&122&119&78&77&76&65&68&71&22&23&24\\
\hline
126&123&120&75&74&73&64&67&70&25&26&27\\
\hline
\end{tabular}
}
\end{table*}

\begin{lemma}\label{properties_for_weaving_matrix}
For any positive integer $q$, the weaving square $W_{4q}$ satisfies the following properties.
\begin{itemize}
\item[(1)] All integers from $1$ to $16q^2$ appear in $W_{4q}$ exactly once.
\item[(2)] The row sums and column sums all equal $32q^3 +2q$.
\item[(3)] There exist two disjoint $1$-APs with length $q$ in every row and column.
\item[(4)] For any $(i,j)\in [4q]\times[4q]$, $W_{4q}(i, j)\le 8q^2$ if and only if $i,j\le 2q$ or $i, j> 2q$.
\end{itemize}
\end{lemma}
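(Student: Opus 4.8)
The plan is to verify the four properties directly from Definition~\ref{definition_weaving_matrix}, exploiting the block structure of $W_{4q}$. Throughout, write any $(i,j)\in[4q]\times[4q]$ uniquely as $i=q(i_1-1)+i_2$, $j=q(j_1-1)+j_2$ with $i_1,j_1\in[4]$, $i_2,j_2\in[q]$, and set $k=(j_1-i_1\bmod 4)\in\{0,1,2,3\}$, so that $W_{4q}(i,j)=(B(i_1,j_1)-1)q^2+L^{k}_q(i_2,j_2)$.

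For \textbf{(1)}: each little square $L^k_q$, being a rotation of $L_q$, has entries exactly $\{1,\dots,q^2\}$. Hence on the block indexed by $(i_1,j_1)$ the entries of $W_{4q}$ are exactly $\{(B(i_1,j_1)-1)q^2+1,\dots,B(i_1,j_1)q^2\}$. As $(i_1,j_1)$ ranges over $[4]\times[4]$, the values $B(i_1,j_1)$ range over $\{1,\dots,16\}$ exactly once (a property of the base square $B$, checked by inspection of its displayed entries), so the blocks partition $[16q^2]$. This gives (1).

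For \textbf{(2)}: fix a row $i$, i.e.\ fix $i_1,i_2$. Summing over $j$ splits as a sum over the four column-blocks $j_1\in[4]$, and within block $j_1$ over $j_2\in[q]$. Within that block the contribution is $q(B(i_1,j_1)-1)q^2+\sum_{j_2=1}^{q}L^{k}_q(i_2,j_2)$. Since each $L^k_q$ is a rotation of $L_q$, the sum of any one of its rows equals either the sum of a fixed row of $L_q$ or the sum of a fixed column of $L_q$; in either case one computes that the $i_2$-th line sum of $L^k_q$ equals $\frac{q(q^2+1)}{2}+q^2(c-1)$ for an offset $c\in\{i_2, q+1-i_2\}$ depending on the rotation — the key point being that as $j_1$ runs through $[4]$, the four directions $k=(j_1-i_1\bmod 4)$ are all of $\{0,1,2,3\}$, so the two ``$i_2$'' offsets and the two ``$q+1-i_2$'' offsets occur once each and their $i_2$-dependence cancels. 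Combining with $\sum_{j_1}B(i_1,j_1)=$ (the common row sum of $B$, which is $34$), the total is independent of $i_1,i_2$; plugging in $q$ gives $32q^3+2q$. The column sums are handled identically, using that $B$ has constant column sums and that the rotation pattern is also ``Latin'' down each block-column.

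For \textbf{(3)}: in row $i$, restrict to the column-block $j_1$ for which $k=(j_1-i_1\bmod 4)\in\{0,2\}$, i.e.\ the little square there is $L^0_q$ or $L^2_q$ (pointing $\rightarrow$ or $\leftarrow$); for such $k$ the $i_2$-th row of $L^k_q$ is a run of $q$ consecutive integers, and after the block shift by $(B(i_1,j_1)-1)q^2$ it remains a $1$-AP of length $q$ inside that row of $W_{4q}$. There are exactly two values of $j_1\in[4]$ with $k\in\{0,2\}$, and they lie in different column-blocks, so the two $1$-APs are disjoint. Columns are symmetric, using the rotations $\downarrow,\uparrow$ which have consecutive \emph{columns}. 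For \textbf{(4)}: $W_{4q}(i,j)\le 8q^2$ iff $B(i_1,j_1)\le 8$, and by inspection of $B$ the entries $\le 8$ are precisely those in the upper-left and lower-right $2\times 2$ subsquares, i.e.\ $i_1,j_1\le 2$ or $i_1,j_1\ge 3$, which translates to $i,j\le 2q$ or $i,j>2q$.

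The only genuinely delicate step is \textbf{(2)}: one must track how the rotation index $k=(j_1-i_1\bmod 4)$ cycles through all four directions as $j_1$ (resp.\ $i_1$) varies, and check that the line sums of the four rotations of $L_q$, when summed in the pattern dictated by $B$, lose their dependence on the within-block coordinates $i_2$ (resp.\ $j_2$); everything else is bookkeeping of the explicitly given $4\times4$ square $B$ and of rotations of the arithmetic-progression square $L_q$.
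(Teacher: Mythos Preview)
Your approach matches the paper's: verify each property from the block decomposition, with (1), (3), (4) essentially by inspection. For (2) the paper takes a shorter route than yours: rather than computing the four row sums of $L^0_q,\dots,L^3_q$ separately and arguing cancellation, it observes the pointwise identity
\[
\sum_{k=0}^{3}L^{k}_q(i_2,j_2)=2q^2+2 \qquad\text{for every }(i_2,j_2)\in[q]\times[q],
\]
from which $\sum_{j_2=1}^q\sum_{k=0}^3 L^k_q(i_2,j_2)=q(2q^2+2)$ immediately; combined with $\sum_{j_1}B(i_1,j_1)=34$ this gives the row sum $30q^3+q(2q^2+2)=32q^3+2q$.

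Your stated formula for the individual line sums is not correct: the $i_2$-th row of $L^0_q$ sums to $\tfrac{q(q+1)}{2}+q^2(i_2-1)$, while the $i_2$-th row of $L^1_q$ (a column sum of $L_q$) is $qi_2+\tfrac{q^2(q-1)}{2}$, so the two families do not share the single form $\tfrac{q(q^2+1)}{2}+q^2(c-1)$ you wrote, and in particular the $c$-coefficient is $q^2$ in one pair and $q$ in the other. The pairing idea still works---$L^0$ with $L^2$ pairs $c=i_2$ against $c=q+1-i_2$, and likewise $L^1$ with $L^3$---so the $i_2$-dependence does cancel, just not under a single formula. The paper's pointwise identity sidesteps this bookkeeping entirely.
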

\begin{proof}By the definition of  $W_{4q}$, the properties (1), (3) and (4) are clear. We only prove (2) for the row sums. For each row $i\in [4q]$, there exists a unique pair $(i_1, i_2)\in [4]\times[q]$ such that $i=(i_1-1)q +i_2$. By the definition of $W_{4q}$,
\begin{equation*}
\begin{aligned}
\sum_{j=1}^{4q} W_{4q}(i, j) &=\sum_{j=1}^q W_{4q}(i, j)+ \sum_{j=1}^q W_{4q}(i, q+j)+ \sum_{j=1}^q W_{4q}(i, 2q+j)+ \sum_{j=1}^q W_{4q}(i, 3q+j)\\
 &=\sum_{k=1}^4 \sum_{j=1}^q \left((B(i_1, k)-1)q^2+L_q^{(k-i_1 \mod 4)}(i_2, j)\right)\\
 &=q^3\left(\sum_{k=1}^4 B(i_1, k)-4\right)+\sum_{j=1}^q\left(\sum_{k=1}^4 L_q^{(k-i_1 \mod 4)}(i_2, j)\right).
\end{aligned}
\end{equation*}
Notice that $\sum_{k=1}^4B(j, k)=34$ for each $j\in [4]$, and $\sum_{k=0}^3L_q^{k}(i, j)=2q^2+2$ for any $(i, j)\in [4q]\times [4q]$. So
\begin{equation}\label{rowsum}
                                  \sum_{j=1}^{4q} W_{4q}(i, j)=30q^3+q(2q^2+2)=32q^3 +2q,
                                \end{equation} which does not depend on the value of $i$, hence the row sums are the same.\end{proof}

%

%
%

 Now we give an example of constructing a labeling on $K_{8q}$ by using weaving squares.
Before that, we have the following observations.
\begin{remark}\label{re-dec}
  Suppose that there exists an edge decomposition of a graph $G$ into $m$ factors $G_1, G_2, \ldots, G_m$, where each $G_i$ has a labeling $t_i$. Then we can construct a labeling $t$ from $t_i, i\in [m]$, by assigning labels for edges in $G_i$ sequentially, that is,  $t(e)=t_i(e)+\sum_{j=1}^{i-1}|E(G_i)|$ if $e\in E(G_i)$. It is easy to see that if each $t_i$ is an $\alpha_i$-almost supermagic labeling  on $G_i$, then $t$ is an $(\sum_{i=1}^m\alpha_i)$-almost supermagic labeling on $G$. Further, if each $t_i$ is of $(\lambda_i, \ell_i)_p$ type, then $t$ is of $(\sum_{i=1}^m\lambda_i, \sum_{i=1}^m \ell_i)_p$ type.
\end{remark}

Consider $K_{8q}$ with vertices $v_1,  \ldots, v_{4q},u_1,  \ldots, u_{4q}$,  where $q\ge 2$. Decompose $K_{8q}$ into three subgraphs $G_1, G_2$ and $G_3$ as follows: $G_1$ is a bipartite complete graph with two parts $\{v_1, v_2, \ldots, v_{4q}\}$ and $\{u_1, u_2, \ldots, u_{4q}\}$; $G_3$ is a $1$-factor of $G$ with edge set $\{v_{2i-1}v_{2i},u_{2i-1}u_{2i}: i\in [2q]\}$; and $G_2$ consists of all the remaining edges. Observe that $G_i, i=1, 2, 3$ are factors of $K_{8q}$ with degrees $4q$, $4q-2$ and $1$, respectively. We use $W_{4q}$ to define a labeling $t_1$ of $G_1$, that is, $t_1(u_iv_j)= W_{4q}(i, j)$, for $i,j\in [4q]$. By Lemma~\ref{properties_for_weaving_matrix}, $t_1$ is a supermagic labeling on $G_1$ of $(2, 2q)_p$ type for any $p\le q/2$.
Note that $G_2$ is a $2K_{2q[2]}$, which has a supermagic labeling, say $t_2$, by Theorem~\ref{1997_structure_theorem}. For $G_3$, we use any random labeling, which is trivially an $\frac{n}{2}$-almost supermagic labeling. Then as in Remark~\ref{re-dec}, we define the labeling $\tau_{8q}$ on $K_{8q}$ as follows:
for any $e\in E(K_{8q})$,


\begin{equation}\label{primary_construction}
\tau_{8q}(e)=\left\{
    \begin{array}{ll}
    t_1(e)=W_{4q}(i, j), & e=u_iv_j\in E(G_1);\\
    t_2(e)+16q^2, & e\in E(G_2);\\
    32q^2-8q+k, & e \text{ is the $k$th edge in } E(G_3), k\in [4q].
    \end{array}
\right.
\end{equation}
By Remark~\ref{re-dec}, $\tau_{8q}$ is an $\frac{n}{2}$-almost supermagic labeling of  $(2, 2q)_p$ type for any $p\le q/2$. By Corollary~\ref{cor-ml}, we have $r(p, \tau_{8q})\le 0.75$, which means that $\tau_{8q}$ has a better $p$-robustness than all known ones.

To reduce the robustness ratio to $1/2$ and construct $O(n)$-almost supermagic labelings of $K_n$ for all $n$, we will give several recursive constructions in Section~\ref{sec-main_result}. These constructions input an $\alpha$-almost supermagic labeling of $K_n$ of $(m, \ell)_p$ type, and output one of $K_{n'}$ for a large $n'$. We need the output labeling preserves the parameter $\alpha$, and sometimes improves the parameters $m$ and $\ell$ so as to reduce the robustness ratio by Theorem~\ref{cor-ml}.
In order to make recursions more convenient, we need a stronger structure than $\alpha$-almost supermagic labeling, the so-called $b$-astray good labeling, which is defined in the next subsection.

\subsection{$b$-Astray Good Edge Labelings}\label{sec-astray_good}

A $b$-astray good labeling is indeed a candidate of a labeling, where a  small set  $A\subset E(K_n)$ of edges is chosen such that for any $v$, $|D(v)\cap A|\le b$ for some small tolerance $b$. The labels of edges in $A$ can be assigned randomly in  a wide range around the central interval of $[\epsilon ]$, so we call $A$ an ``astray part''.  However, the
labels of $D(v)\backslash A$ are defined very carefully with strong structure to balance the  label sum differences. Here comes the formal definition, which is motivated from ~\cite[Theorem 4]{stewart_1967}.


\begin{definition}\label{definition_3_astray} Let $K_n=(V,E)$, and let $b$ be a nonnegative integer. An edge labeling $t$ of $K_n$ is said to be  \emph{$b$-astray good} if there exists a subset $A\subset E$, which is called the \emph{astray part},  such that the following conditions hold.
\begin{itemize}
\item[(1)] Denote $a=|A|$, then $a\equiv \epsilon\mod 2$ and $S(t, A)=[\frac{\epsilon-a}2+1, \frac{\epsilon+a}2]$, that is, the label set of $A$ is an interval located right in the center of $[\epsilon]$.

\item[(2)] Denote $L$ as the set of edges with labels smaller than those in $A$, that is, $S(t, L)=[\frac{\epsilon-a}2]$. Let $H=E\setminus (A\cup L)$, and hence $S(t, H)=[\frac{\epsilon+a}2+1, \epsilon]$. Call $L$ and $H$ \emph{lower set} and \emph{higher set} of $t$, respectively.

    Then for each $v\in V$, $|D(v)\cap A|\le b$ and $|D(v)\cap L|=|D(v)\cap H|$.
\item[(3)] For each $v\in V$, $s(t, D(v)\backslash A)=|D(v)\backslash A| \cdot (\epsilon+1)/2.$
\end{itemize}
\end{definition}

Definition~\ref{definition_3_astray} (2) ensures that in each $D(v)$, most edges are distributed in the lower set $L$ and the higher set $H$ evenly, and Definition~\ref{definition_3_astray} (3) further requires that the average value of $t$ on those edges is equal to the average value among all edges of $K_n$. Both  conditions contribute to the strong structure of balancing property for a $b$-astray good labeling.

For any two nonnegative integers $b_1\le b_2$, a $b_1$-astray good labeling must be $b_2$-astray good with the same astray part by definition. A lower value of $b$ means a better balancing property. Especially, if $t$ is a $0$-astray good labeling, then the astray part $A$ must be empty, which implies that $t$ must be supermagic by Definition~\ref{definition_3_astray} (3). To guarantee an $\alpha$-almost supermagic labeling with $\alpha=O(n)$, it suffices to require a $b$-astray good labeling with a constant $b$ by the following lemma.

\begin{lemma}\label{4.5n_almost}
If an edge labeling $t$ is $b$-astray good, then $t$ is $\frac {b^2} 2 n$-almost supermagic.
\end{lemma}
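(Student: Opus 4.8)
The plan is to compute each vertex label sum $s(t,v)$ by splitting $D(v)$ into its astray part $D(v)\cap A$ and the rest $D(v)\setminus A$. Conditions (2) and (3) of Definition~\ref{definition_3_astray} are tailored precisely so that the non-astray part contributes the same ``average'' amount at every vertex; consequently the whole discrepancy between two vertices will come only from the at most $b$ astray edges at each of them, and these have labels confined to a short central block of $[\epsilon]$.

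Concretely, I would first fix a vertex $v$, set $a_v=|D(v)\cap A|\le b$, and use condition (3) to get $s(t,D(v)\setminus A)=(n-1-a_v)(\epsilon+1)/2$. Next, condition (1) says $S(t,A)=[\tfrac{\epsilon-a}2+1,\tfrac{\epsilon+a}2]$ is a block of $a$ consecutive integers symmetric about $(\epsilon+1)/2$ (here one checks $a\equiv\epsilon\bmod2$ makes the center work out), so I would write $t(e)=(\epsilon+1)/2+\delta_e$ for each $e\in A$ with $|\delta_e|\le (a-1)/2$. Substituting both facts collapses the vertex sum to $s(t,v)=(n-1)(\epsilon+1)/2+\sum_{e\in D(v)\cap A}\delta_e$, i.e. an enormous term common to all vertices plus a small astray correction.

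The key step is then that the bulky common term $(n-1)(\epsilon+1)/2$ cancels in $s(t,u)-s(t,v)$, leaving only $\sum_{e\in D(u)\cap A}\delta_e-\sum_{e\in D(v)\cap A}\delta_e$, which has absolute value at most $(a_u+a_v)\cdot\tfrac{a-1}{2}\le b(a-1)$. Finally I would invoke the handshake identity $2a=\sum_{w\in V}|D(w)\cap A|\le bn$ (each astray edge has two endpoints, each endpoint lies in at most $b$ astray edges), which gives $a\le bn/2$ and hence $b(a-1)<b^2n/2$, so $|s(t,u)-s(t,v)|<\tfrac{b^2}2n$ for all $u\ne v$; the cases $b=0$ or $a=0$ are trivial since then $A=\emptyset$ and $t$ is supermagic.

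There is essentially no obstacle here; the only points needing care are (a) verifying that the interval $S(t,A)$ has average exactly $(\epsilon+1)/2$, which is what makes conditions (1) and (3) fit together so that the large term is genuinely common to every vertex, and (b) the handshake bound $a\le bn/2$, which is what upgrades the a priori $a$-dependent estimate $b(a-1)$ to the claimed $O(n)$ bound that is independent of the size of the astray part.
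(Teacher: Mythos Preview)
Your proposal is correct and follows essentially the same approach as the paper: split $s(t,v)$ into the non-astray part (handled by condition~(3)) and the astray part (whose labels lie in the central block by condition~(1)), so that the large common term $(n-1)(\epsilon+1)/2$ cancels and only the bounded astray corrections survive; then use the handshake bound $a\le bn/2$ to finish. Your introduction of $\delta_e=t(e)-(\epsilon+1)/2$ makes the cancellation slightly more transparent than the paper's version, but the argument is the same.
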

\begin{proof} Let $A$ be the astray part of size $a$. Then $|D(v)\cap A|\le b$ for any vertex $v$. So we have $a\le \frac b 2 n$. By Definition~\ref{definition_3_astray} (1), for any $e\in A$, $|t(e)-\frac{\epsilon+1}{2}|\le \frac{a-1}{2}<\frac b 4 n$. For any two vertices $u$, $v$ of $K_n$, let $A_u=A\cap D(u)$ and $A_v=A\cap D(v)$. Then $|A_u|, |A_v|\le b$. By Definition~\ref{definition_3_astray} (3),
\begin{align*}
|s(t, u)-s(t, v)|  & = |s(t, D(u)\backslash A_u)+s(t, A_u)-s(t, D(v)\backslash A_v)-s(t, A_v)|\\
   & = | (n-1-|A_u|) \cdot (\epsilon+1)/2 -(n-1-|A_v|) \cdot (\epsilon+1)/2+s(t, A_u)-s(t, A_v)|\\
   &= |(|A_v|-|A_u|)\cdot (\epsilon+1)/2+s(t, A_u)-s(t, A_v)|\\
   &<(|A_u|+|A_v|)\frac b 4 n\le \frac {b^2} 2 n.
\end{align*}
The last line is due to that  $|t(e)-(\epsilon+1)/2|<\frac b 4 n$ for any $e\in A$.
\end{proof}


The edge labeling $\tau_{8q}$ of $K_{8q}$ in Eq. (\ref{primary_construction}) has been proved to be $4q$-almost supermagic and of $(2, 2q)_p$ type for any $p\le q/2$. However, we are not sure whether it is $b$-astray good for some $b$. Next we modify it to be $1$-astray good with $A=E(G_3)$ being the astray part of size $4q$.


To satisfy Definition~\ref{definition_3_astray} (2), we still use the weaving square to label edges in $G_1$, since each row of the square, which represents the values of $D_{G_1}(v)$ for some $v$, has half of the entries with small values and half with big values.  For edges in $G_2$, we need to construct a special supermagic labeling which has the similar property.
We know that $G_2$ is a disjoint union of two $K_{2q[2]}$'s, say $H$ and $H'$.
 Consider a decomposition of $H$ into two $(2q-1)$-factors $F_1$ and $F_2$. For example, let $F_1$ be the $2K_q$ subgraph of $H$. Viewing $H'$ as a copy of $H$, we have two factors $F'_1$ and $F'_2$ of $H'$.  
 Let $t_0$ be a supermagic labeling on $H$ (and hence also on $H'$), which exists by Theorem~\ref{1997_structure_theorem}. Then the desired labeling $\bar t$ of $G_2$ is defined as follows.
\begin{equation*}
\bar t(e)=\left\{
    \begin{array}{ll}
    t_0(e), & e\in  F_1\cup F'_2;\\
    t_0(e)+|E(H)|, & e\in F_2\cup F'_1.
    \end{array}
\right.
\end{equation*}
It is easy to check that $\bar t$ is a supermagic labeling of $G_2$. For example, for each vertex $v$ of $H$,  we have
\begin{align*}
  s(\bar t, v) & =\sum_{e: ~v\in e\in F_1}t_0(e)+\sum_{e: ~v\in e\in F_2}(t_0(e)+|E(H)|)\\
  & = s(t_0, v)+(2q-1)|E(H)|,
\end{align*}
which is a constant, since $t_0$
 is supermagic.
%

Now a $1$-astray good labeling $T_{8q}$ of $K_{8q}$ is constructed below, where the label set of the astray part $A=E(G_3)$ is located in the center, and labels of edges in $G_1$ and $G_2$ are lifted so that they are all distinct and are distributed evenly to the left and the right of the center.

\begin{construction}\label{const_T8q}
 For any $e\in E(K_{8q})$,
\begin{equation*}
T_{8q}(e)=\left\{
    \begin{array}{ll}
    \bar t(e), & e\in F_1\cup F'_2;\\
    W_{4q}(i, j)+8q^2-4q, & e=u_iv_j\in G_1 \text{ and } W_{4q}(i, j)\le 8q^2;\\
    16q^2-4q+k, & \text{$e$ is the $k$th edge of $G_3$, $k\in[4q]$};\\
    W_{4q}(i, j)+8q^2, & e=u_iv_j\in G_1 \text{ and } W_{4q}(i, j)> 8q^2;\\
    \bar t(e)+16q^2+4q, & e\in F_2\cup F'_1.
\end{array}
\right.
\end{equation*}
\end{construction}

Next we show that $T_{8q}$ is a $1$-astray good labeling and with the same $(m,\ell)_p$ type as $\tau_{8q}$.

\begin{lemma}\label{8q}
  The labeling $T_{8q}$ in Construction~\ref{const_T8q} is a $1$-astray good labeling of $K_{8q}$. Furthermore, $T_{8q}$ is of $(2, 2q)_p$ type for any $p\le q/2$, and hence $r(p,T_{8q})\le 0.75$.
\end{lemma}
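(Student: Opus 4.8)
The plan is to establish three facts about $T_{8q}$: (a) it is a bijection $E(K_{8q})\to[\epsilon_{8q}]$, (b) it satisfies the three conditions of Definition~\ref{definition_3_astray} with astray part $A=E(G_3)$ and $b=1$, and (c) it is of $(2,2q)_p$ type for every $p\le q/2$; the robustness bound $r(p,T_{8q})\le\tfrac34$ then follows by feeding (b) into Lemma~\ref{4.5n_almost} (which makes $T_{8q}$ a $\tfrac n2$-almost supermagic labeling) and (c) into Theorem~\ref{cor-ml}. For (a), I would first record the five value-blocks that Construction~\ref{const_T8q} prescribes. Write $\epsilon:=\epsilon_{8q}=32q^2-4q$ and note $|E(K_{2q[2]})|=8q^2-4q$. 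Using that $W_{4q}$ is a bijection onto $[16q^2]$ (so its entries $\le 8q^2$ are exactly $[8q^2]$) together with Lemma~\ref{properties_for_weaving_matrix}(4), the five cases of $T_{8q}$ occupy, in order, the intervals $[1,8q^2-4q]$ (edges of $F_1\cup F_2'$, carrying the small half of $\bar t$), $[8q^2-4q+1,16q^2-4q]$ ($G_1$-edges with $W_{4q}$-value in $[8q^2]$), $[16q^2-4q+1,16q^2]$ (the $4q$ edges of $G_3$), $[16q^2+1,24q^2]$ ($G_1$-edges with $W_{4q}$-value in $[8q^2+1,16q^2]$), and $[24q^2+1,32q^2-4q]$ (edges of $F_2\cup F_1'$). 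These five intervals partition $[\epsilon]$ and $T_{8q}$ is a bijection on each block, so $T_{8q}$ is a legal edge labeling.

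For (b), put $a:=|A|=|E(G_3)|=4q$. Condition (1) is exactly the third block above, since $a\equiv\epsilon\equiv0\pmod2$ and $[\tfrac{\epsilon-a}2+1,\tfrac{\epsilon+a}2]=[16q^2-4q+1,16q^2]$. For condition (2), the lower set is $L=(F_1\cup F_2')\cup\{G_1\text{-edges with }W_{4q}\text{-value}\le 8q^2\}$ and $H=E\setminus(A\cup L)$ consists of the remaining $G_1$-edges together with $F_2\cup F_1'$. Because $G_3$ is a $1$-factor, $|D(v)\cap A|=1$ for every $v$, so $b=1$ works. For the balance $|D(v)\cap L|=|D(v)\cap H|$, consider $v=v_j$: of its $4q$ edges in $G_1$, Lemma~\ref{properties_for_weaving_matrix}(4) gives exactly $2q$ with $W_{4q}$-value $\le 8q^2$ (hence in $L$) and $2q$ in $H$; of its $4q-2$ edges in the first $K_{2q[2]}$-component of $G_2$, exactly $2q-1$ lie in $F_1\subset L$ and $2q-1$ in $F_2\subset H$ since $F_1,F_2$ are $(2q-1)$-factors; hence $|D(v_j)\cap L|=|D(v_j)\cap H|=4q-1$. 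The vertices $u_i$ are handled identically with rows of $W_{4q}$ in place of columns and $F_1',F_2'$ in place of $F_1,F_2$. Condition (3) is the one genuine computation. For $v=v_j$ I would split $s(T_{8q},D(v)\setminus A)$ into its $G_1$-part and its $G_2$-part. The $G_1$-part is the column sum $32q^3+2q$ of $W_{4q}$ (Lemma~\ref{properties_for_weaving_matrix}(2)) plus the two half-shifts $2q(8q^2-4q)$ and $2q\cdot8q^2$, i.e.\ $64q^3-8q^2+2q$. Unwinding $\bar t=t_0+|E(K_{2q[2]})|$ shows the $F_2$-edges receive net shift $24q^2$ and the $F_1$-edges net shift $0$, so the $G_2$-part equals $s(t_0,v_j)+(2q-1)\cdot24q^2$, where $t_0$ is the supermagic labeling of $K_{2q[2]}$ from Theorem~\ref{1997_structure_theorem}(ii) and $s(t_0,v_j)=(2q-1)(8q^2-4q+1)$ by double counting its edge labels. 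Adding, $s(T_{8q},D(v_j)\setminus A)=128q^3-48q^2+8q-1=(4q-1)(32q^2-4q+1)=|D(v_j)\setminus A|\cdot(\epsilon+1)/2$, as required; the identical arithmetic works at $u_i$ using row sums. Thus $T_{8q}$ is $1$-astray good.

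For (c), the key observation is that Construction~\ref{const_T8q} shifts each little square of $W_{4q}$ as a whole: the little square occupying cell $(i_1,j_1)$ of the base square $B$ has all of its entries inside the single interval $[(B(i_1,j_1)-1)q^2+1,\,B(i_1,j_1)q^2]$, which is entirely $\le 8q^2$ or entirely $>8q^2$, so every one of its entries receives the same additive shift. Hence each of the two disjoint length-$q$ $1$-APs that Lemma~\ref{properties_for_weaving_matrix}(3) guarantees in each column (resp.\ row) of $W_{4q}$ survives, translated, as a length-$q$ $1$-AP in $S(T_{8q},v_j)$ (resp.\ $S(T_{8q},u_i)$); coming from distinct cells of $B$ they lie in distinct $q^2$-intervals and so remain disjoint. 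Since $p\le q/2$ forces $q\ge 2p$, these two $1$-APs witness that $S(T_{8q},v)$ is of $(2,2q)_p$ type for every $v$. Combining Lemma~\ref{4.5n_almost} ($T_{8q}$ is $\tfrac n2$-almost supermagic, so $\alpha=O(n)$) with Theorem~\ref{cor-ml} applied to $m=2$ and $\ell=2q=\tfrac n4$ (so $\beta=\tfrac14$) yields $r(p,T_{8q})\le 1-\tfrac14=\tfrac34$.

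The main obstacle is the bookkeeping in condition (3): tracking the three distinct additive shifts ($8q^2-4q$ and $8q^2$ on the two $G_1$-halves, and $16q^2+4q$ on $F_2\cup F_1'$, which becomes a net $24q^2$ once $\bar t=t_0+|E(K_{2q[2]})|$ is unwound) and combining them with the uniform row/column sum of $W_{4q}$ and the constant vertex sum of $t_0$ so that the totals collapse to exactly $|D(v)\setminus A|\cdot(\epsilon+1)/2$. Everything else — the five-block partition for (a), the balance counts for (b)(2), and the survival of the $1$-APs for (c) — is routine verification built directly on the properties of $W_{4q}$ already recorded in Lemma~\ref{properties_for_weaving_matrix}.
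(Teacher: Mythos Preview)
Your proposal is correct and follows essentially the same approach as the paper's proof: take $A=E(G_3)$ as the astray part, verify the five value-blocks, check the balance $|D(v)\cap L|=|D(v)\cap H|$ via the $(2q-1)$-factors and Lemma~\ref{properties_for_weaving_matrix}(4), compute $s(T_{8q},D(v)\setminus A)$ by splitting into the $G_1$- and $G_2$-contributions, and invoke the weaving square's $1$-APs for the $(2,2q)_p$ type. Your treatment is in fact slightly more explicit than the paper's in two places --- the five-interval partition establishing bijectivity, and the argument that each little square receives a uniform shift so the length-$q$ $1$-APs survive --- where the paper simply says ``it is clear'' and ``one can also check''. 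One tiny wording point: for vertices $u_i$ the roles of the factors are swapped (it is $F_2'$ that lands in $L$ and $F_1'$ in $H$), but since both are $(2q-1)$-factors the counts and the net shift $(2q-1)\cdot 24q^2$ are unchanged, so your claim that ``the identical arithmetic works'' is valid.
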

\begin{proof}Let $A=E(G_3)$. It is clear that   $T_{8q}$ is a labeling, and $|D(v)\cap A|=1$ for any vertex $v$. Notice that the number of edges of $K_{8q}$ is $\epsilon=32q^2-4q$, and  $S(T_{8q}, A)=[16q^2-4q+1, 16q^2]$, which locates right in the center part of $[\epsilon]$. Then  $L=F_1\cup F'_2\cup \{ u_iv_j\in G_1: M_{4q}(i, j)\le 8q^2\}$ and $H=F_2\cup F'_1 \cup \{ u_iv_j\in G_1: M_{4q}(i, j)> 8q^2\}$. By definitions of $F_i$, $F'_i$, and the weaving square, we know that $|D(v)\cap L|= |D(v)\cap H|$ for each vertex $v$. Finally, for a vertex $v$, say of $H$,

\begin{align*}
  s(T_{8q}, D(v)\setminus A) & =\sum_{e: ~v\in e\in E(G_1)}T_{8q}(e)+\sum_{e: ~v\in e\in F_1}T_{8q}(e)+\sum_{e: ~v\in e\in F_2}T_{8q}(e)\\
  & = \sum_{e: ~v\in e\in E(G_1)}T_{8q}(e)+s(\bar{t},v)+(16q^2+4q)(2q-1).
\end{align*}
The first summation is the row sum of the lifted weaving square, which is $4q(8q^2-2q)+ 2q(1+16q^2)=64q^3-8q^2+2q$, and the second term is label sum of the supermagic $\bar t$, which is $(8q^2+2q)(4q-2)+(2q-1)(4q(4q-2)+1)=64q^3-40q^2+6q-1$, so the total value of  $s(T_{8q}, D(v)\setminus A)$ is $(4q-1)(32q^2-4q+1)$, which equals $(8q-2)\cdot  (\epsilon+1)/2$.

 Furthermore, as $\tau_{8q}$, one can also check that $T_{8q}$ is of $(2, 2q)_p$ type from $W_{4q}$ for any $p\le q/2$, and hence $r(T_{8q}, p)\le 0.75$.
\end{proof}

 In the next section, we will reduce this ratio from $0.75$ to $1/2$ for all $K_n$ by several recursive constructions.

\section{Recursive Constructions and Main Results}\label{sec-main_result}\label{sec:rec}
 This section is devoted to prove our {\bf Main Result}, which is restated below.
 \begin{theorem}\label{theorem_asymptotic_best}
For any real number $\varepsilon>0$ and any large enough $n$, there exists a $ 7n$-almost supermagic labeling  $t$ of $K_n$ such that $r(p, t)\le \frac 1 2+\varepsilon$ for all admissible $p$.
\end{theorem}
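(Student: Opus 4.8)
The plan is to run a recursive halving argument built on the three recursive constructions of this section (Constructions~\ref{const_t1}, \ref{const_odd}, \ref{const_t0}) with base labeling $T_{8q}$ from Construction~\ref{const_T8q}. The engine is Construction~\ref{const_t0}: applied to a $3$-astray good labeling of $K_{4q}$ of $(m,\ell)_p$ type (with $q\ge 2p$), it returns a $3$-astray good labeling of $K_{8q}$ of $(m+2,\ell+2q)_p$ type. Writing $\beta=\ell/(4q)$ and $\beta'$ for the analogous density of the output, this is exactly $\beta'=\tfrac12\beta+\tfrac14$, so by Theorem~\ref{cor-ml} the robustness bound improves from $1-\beta$ to $1-\beta'=\tfrac12\bigl(\tfrac12+(1-\beta)\bigr)$, i.e.\ it moves halfway to $1/2$. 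Thus a number of iterations depending only on $\varepsilon$ suffices, and the remaining work is bookkeeping: matching vertex counts to an arbitrary $n$, keeping the type parameters uniform in the admissible $p=p(n)$, and tracking the almost-supermagic parameter.

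First I would fix $\varepsilon>0$ and set $s=\lceil-\log_2\varepsilon\rceil$, so $2^{-s}\le\varepsilon$. For large $n$ I would build the chain $n_0=n$, $n_1=8\lfloor n/8\rfloor$, $n_i=4\lfloor n_{i-1}/8\rfloor$ for $2\le i\le s$; a short check shows each $n_i$ ($i\ge1$) is divisible by $4$ and, for $i\in[s-1]$, either $n_i=2n_{i+1}$ or $n_i=2n_{i+1}+4$. Since $s$ is constant and $p=o(n)$, for $n$ large every $n_i$ ($i\ge1$) exceeds $8p$, so each later use of Construction~\ref{const_t0} meets $q\ge 2p$, and the base labeling $T_{8q}$ with $q$ of order $n/2^{s}$ is of $(2,2q)_p$ type by Lemma~\ref{8q} because $p\le q/2$. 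At the bottom of the chain I would produce a $3$-astray good $t_s$ on $K_{n_s}$: take $t_s=T_{n_s}$ if $8\mid n_s$, else ($n_s\equiv4\bmod8$) apply Construction~\ref{const_t1} to a $T_{8q}$, retaining $3$-astray goodness and the $(m,\ell)_p$ type with constant $m$ and $\ell_s=\Theta(n_s)$.

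Then I would climb the chain: for $i=s-1$ down to $1$, form $t_{s-i}$ on $K_{n_i}$ from $t_{s+1-i}$ on $K_{n_{i+1}}$ by Construction~\ref{const_t0} (when $n_i=2n_{i+1}$) or by Construction~\ref{const_t0} followed by Construction~\ref{const_t1} (when $n_i=2n_{i+1}+4$). Each step keeps the labeling $3$-astray good, raises $m$ by $2$ (so $m\le m_s+2(s-1)$ stays constant), and --- since the $+4$ correction of Construction~\ref{const_t1} is $O(1)$, negligible in the limit --- drives $\beta_i:=\lim_{n\to\infty}\ell_i/n_i$ to satisfy $\beta_i=\tfrac12\beta_{i+1}+\tfrac14$. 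Iterating, $\beta_1-\tfrac12=2^{-(s-1)}(\beta_s-\tfrac12)$, whence $1-\beta_1<\tfrac12+2^{-s}\le\tfrac12+\varepsilon$ and, by Theorem~\ref{cor-ml}, $r(p,t_1)\le\tfrac12+\varepsilon$ for all admissible $p$. Finally I would pass from $n_1$ (a multiple of $8$) to $n$: if $n$ is even, apply Construction~\ref{const_t1}; if $n$ is odd, apply Construction~\ref{const_t1} to reach $K_{n-1}$ and then Construction~\ref{const_odd} to reach $K_n$. Both moves preserve the $(m,\ell)_p$ type, hence the bound on $r(p,\cdot)$ (the discrepancy $n_1=n-O(1)$ is negligible in the $\varlimsup$ defining $r$); Lemma~\ref{4.5n_almost} keeps every even intermediate labeling $\tfrac92 n$-almost supermagic, and the final odd extension, analyzed as in Construction~\ref{const_odd}, keeps the discrepancy at most $7n$. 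This yields the claimed labeling.

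I expect the main obstacle to be uniformity in $p$. Construction~\ref{const_t0} only improves the type when $q\ge2p$, so one must verify that the \emph{smallest} block along the chain --- of size a fixed fraction $\approx n/2^{s}$ of $n$ --- still dominates $2p$ once $n$ is large, which is exactly where $p=o(n)$ and $p\to\infty$ are used, and that the long $1$-APs inherited from $T_{8q}$ together with those freshly produced by the weaving squares stay pairwise disjoint and of length $\ge 2p$ through each block shift performed inside Constructions~\ref{const_t0} and~\ref{const_t1}. A secondary delicate point is confirming that the additive growth of $\ell$ at each step genuinely matches the ideal recursion $\beta_i=\tfrac12\beta_{i+1}+\tfrac14$ rather than lagging behind, so that the constant $s=\lceil-\log_2\varepsilon\rceil$ really forces $r(p,t)\le\tfrac12+\varepsilon$; this is precisely the content of the ``$(m+2,\ell+2q)_p$'' guarantee.
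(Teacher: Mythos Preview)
Your proposal is correct and follows essentially the same route as the paper: the same descending chain $n_0,n_1,\dots,n_s$ with $s$ depending only on $\varepsilon$, the same base $T_{8q}$ (adjusted via Construction~\ref{const_t1} when $n_s\equiv4\bmod8$), repeated application of Corollary~\ref{coro_from_4q_to_8q} to climb the chain while tracking the $(m,\ell)_p$ type, and the same finishing moves through Constructions~\ref{const_t1} and~\ref{const_odd}. There is a small indexing slip in your climbing loop (as written, $t_{s-i}$ lives on $K_{n_i}$ while $t_s$ was placed on $K_{n_s}$, which is inconsistent), but the intended iteration is clear and matches the paper's argument.
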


The idea of our proof is as follows. Recall that in Lemma~\ref{8q}, the labeling $T_{8q}$ of $K_{8q}$ is $1$-astray good and has $p$-robustness ratio $r(p,T_{8q})\le 0.75$  when $p\le q/2$. Next in Subsection~\ref{sec-ma}, we give constructions of a labeling of $K_{n}$ for all even $n$ from a labeling of $K_{8q}$, then for all odd $n$ from even $n$. All these constructions preserve the astray good property or supermagic property, and keep the same parameters for the $(m, \ell)_p$ type. Since the values of $m$ and $\ell$ control the $p$-robustness ratio by Theorem~\ref{cor-ml}, applying these constructions with $T_{8q}$, we obtain a $7n$-almost supermagic labeling of $K_n$ for all $n$  with $p$-robustness ratio at most $0.75$.

In order to reduce the robustness ratio for all $n$, we need another recursive construction. In Subsection~\ref{sec-reduc}, we give a construction of  labelings of $K_{8q}$ and  $K_{8q+4}$ from one of $K_{4q}$ which preserves the astray good property, and further improves the  parameters for the $(m, \ell)_p$ type. Applying this construction repeatedly, we obtain a  labeling of $K_{8q}$ which has $p$-robustness ratio less than $1/2+\varepsilon$ for any $\varepsilon>0$ when $p\le q/2$. Combining this and Subsection~\ref{sec-ma}, we prove Theorem~\ref{theorem_asymptotic_best} in Subsection~\ref{sec-pr}.

Most constructions work for $b$-astray good labelings  for a general small constant integer $b$. But for our purpose, we can set the largest possible $b$ as $3$, which is the best we can do.
\subsection{From $8q$ to all $n$}\label{sec-ma}

Let $q$ be a positive integer and $t_{(0)}$ be a $3$-astray good edge labeling on $K_{8q}$.
Then we can inductively construct three $3$-astray good edge labelings $t_{(i)}$ on $K_{8q+2i}$ from $t_{(i-1)}$ for all $i\in[3]$. For convenience, the vertices of $K_n$ are denoted by $v_1, v_2, \ldots v_n$ for any $n>0$. So  $K_n$ is viewed as subgraph of $K_{n'}$ if $n'\ge n$. 

\begin{construction}[even case]\label{const_t1}
From a $3$-astray good edge labeling $t_{(0)}$ on $K_{8q}$ with astray part $A_{0}$, lower set $L_0$ and higher set $H_0$, we inductively construct a $3$-astray good labeling $t_{(i)}$  on $K_{8q+2i}$  with some astray part $A_{i}$, lower set $L_i$ and higher set $H_i$, for  $i\in [3]$, as follows.

For $t_{(1)}$,
 let $A_1=A_0\cup\{v_{8q+1}v_{8q+2}\}$. 
 For all $e\in L_0$, $t_{(1)}(e)=t_{(0)}(e)+8q$; for all $e\in H_0$, $t_{(1)}(e)=t_{(0)}(e)+8q+1$.
 For each $i\in[8q]$, let $t_{(1)}(v_{8q+2}v_i)=\epsilon_{8q+2}+1- t_{(1)}(v_{8q+1}v_i)$, where
    \begin{equation*}
    t_{(1)}(v_{8q+1}v_i)=\left\{
    \begin{array}{ll}
    i, & i\in[2q]\cup[6q+1, 8q];\\
   \epsilon_{8q+2}+1-i, & i\in[2q+1, 6q].
    \end{array}
    \right.
    \end{equation*}

For $t_{(2)}$, let $A_2=A_1\cup\{v_{8q+3}v_{8q+4}, v_{8q+4}v_{8q+1}, v_{8q+3}v_{8q+2}, v_{8q+3}v_{8q+1}, v_{8q+4}v_{8q+2}\}$. For all $e\in L_1$, $t_{(2)}(e)=t_{(1)}(e)+8q$; for all $e\in H_1$, $t_{(2)}(e)=t_{(1)}(e)+8q+5$. For each $i\in[8q]$, let $t_{(2)}(v_{8q+4}v_i)=\epsilon_{8q+4}+1-t_{(2)}(v_{8q+3}v_i)$, where
    \begin{equation*}
    t_{(2)}(v_{8q+3}v_i)=\left\{
    \begin{array}{ll}
    i, & i\in[2q]\cup[6q+1, 8q];\\
   \epsilon_{8q+4}+1-i, & i\in[2q+1, 6q].
    \end{array}
    \right.
    \end{equation*}

For $t_{(3)}$, the construction is exactly the same as $t_{(1)}$ but based on $t_{(2)}$.

\end{construction}

\begin{lemma}\label{lemma_stability}
For each $i\in [3]$, the labeling $t_{(i)}$  in Construction~\ref{const_t1}  is a $3$-astray good labeling on $K_{8q+2i}$. Further if $q\ge 2p$ and
 $t_{(0)}$ is of $(m, \ell)_p$ type for some $m$ and $\ell$, then $t_{(i)}$ is of $(m', \ell)_p$ type, where $m'=\max\{m, 3\}$.

\end{lemma}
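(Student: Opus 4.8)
\textbf{Proof proposal for Lemma~\ref{lemma_stability}.}
The plan is to verify the three defining conditions of a $3$-astray good labeling for each $t_{(i)}$, and then track the $(m,\ell)_p$ type through the construction. Since $t_{(3)}$ is built from $t_{(2)}$ exactly as $t_{(1)}$ is built from $t_{(0)}$, and $t_{(2)}$ from $t_{(1)}$ by an analogous (if slightly larger) step, it suffices to carry out the argument once in detail for the passage $t_{(0)}\mapsto t_{(1)}$ and once for $t_{(1)}\mapsto t_{(2)}$, the remaining case being a verbatim repeat. First I would check that $t_{(1)}$ is a bijection onto $[\epsilon_{8q+2}]$: the old lower set $L_0$ keeps the bottom $\frac{\epsilon_{8q}-a_0}{2}$ slots shifted up by $8q$, the $4q$ new edges at $v_{8q+1}$ together with their "mirrored" partners at $v_{8q+2}$ fill in the $8q$ values freed up at the two ends (values $1,\dots,2q$, $6q+1,\dots,8q$ and their complements $\epsilon_{8q+2}+1-i$), so that the astray block $S(t_{(1)},A_1)$ is exactly the centered interval $[\frac{\epsilon_{8q+2}-a_1}{2}+1,\frac{\epsilon_{8q+2}+a_1}{2}]$ with $a_1=a_0+1$; one has to confirm $a_1\equiv\epsilon_{8q+2}\bmod 2$, which follows from $\epsilon_{8q+2}-\epsilon_{8q}=16q+1$ being odd and $a_0\equiv\epsilon_{8q}\bmod 2$. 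This gives Definition~\ref{definition_3_astray}(1).

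For Definition~\ref{definition_3_astray}(2), the key point is the "mirror" trick: for each old vertex $v_i$ the two new incident edges $v_{8q+1}v_i$ and $v_{8q+2}v_i$ receive labels summing to $\epsilon_{8q+2}+1$, hence exactly one lies in the new lower set and one in the new higher set, so $|D(v_i)\cap L_1|=|D(v_i)\cap L_0|+1=|D(v_i)\cap H_0|+1=|D(v_i)\cap H_1|$; for the two brand-new vertices the $i\in[2q]\cup[6q+1,8q]$ versus $i\in[2q+1,6q]$ split is designed precisely so that $v_{8q+1}$ (and by mirroring $v_{8q+2}$) has $2q$ low and $2q$ high incident non-astray edges. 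The astray degree bound $|D(v)\cap A_1|\le 3$ is immediate: old vertices gain nothing in $A$, and $v_{8q+1},v_{8q+2}$ each meet only the single astray edge $v_{8q+1}v_{8q+2}$. For $t_{(2)}$ the same checks go through with $a_2=a_1+5$ and the five new astray edges $\{v_{8q+3}v_{8q+4},v_{8q+4}v_{8q+1},v_{8q+3}v_{8q+2},v_{8q+3}v_{8q+1},v_{8q+4}v_{8q+2}\}$; here the only subtlety is that $v_{8q+1}$ and $v_{8q+2}$ now pick up two extra astray edges each, so their astray degree becomes $1+2=3$, which is exactly why $b=3$ is the best achievable and why the construction cannot be iterated naively past two "odd-to-even" steps before a matching-type argument is needed — this accounting is the one place requiring care. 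Condition Definition~\ref{definition_3_astray}(3) then follows by a short computation: each old vertex's non-astray sum increases by $|D(v)\cap L_0|\cdot 8q+|D(v)\cap H_0|\cdot(8q+1)$ plus the pair-sum $\epsilon_{8q+2}+1$ from its two new non-astray edges, and one checks this equals $|D(v)\backslash A_1|\cdot(\epsilon_{8q+2}+1)/2$ using the inductive hypothesis $s(t_{(0)},D(v)\backslash A_0)=|D(v)\backslash A_0|(\epsilon_{8q}+1)/2$ together with $|D(v)\cap L_0|=|D(v)\cap H_0|$; for the two new vertices the sum of all their non-astray incident labels is $4q(\epsilon_{8q+2}+1)/2$ by the mirror property, which is again the required value since their non-astray degree is $8q$.

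Finally, for the $(m,\ell)_p$ type: by Lemma~\ref{1997_structure_theorem}-style bookkeeping the witness $1$-APs of $t_{(0)}$ for each old vertex $v_i$ lie inside $S(t_{(0)},D(v_i)\backslash A_0)$, hence under the block-shift by $8q$ (on $L_0$) or $8q+1$ (on $H_0$) they remain $1$-APs of the same lengths in $S(t_{(1)},D(v_i))$ — a shift of an interval is an interval — so each old vertex still witnesses an $(m,\ell)_p$ type. For the two new vertices, the incident labels $\{i:i\in[2q]\cup[6q+1,8q]\}\cup\{\epsilon_{8q+2}+1-i:i\in[2q+1,6q]\}$ (and the complementary set for $v_{8q+2}$) contain the $1$-APs $[1,2q]$ and $[6q+1,8q]$, i.e. two $1$-APs of length $2q\ge 2p$ (using $q\ge 2p$, so in fact length $\ge 4p$), giving a $(2,\ell')_p$ type with $\ell'\ge 4q\ge\ell$ whenever $\ell\le 4q$; in the regime of interest ($\ell=\Theta(q)$ with a small constant factor) one simply takes $\ell$ to be the minimum over all vertices, and the new vertices are never the bottleneck. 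Thus every vertex of $K_{8q+2i}$ witnesses an $(m',\ell)_p$ type with $m'=\max\{m,3\}$ (the $3$ covering the worst case $m<3$ where the new vertices' two $1$-APs plus at most one more from the structure are counted), completing the proof; the same three-step verification applied to $t_{(2)}\mapsto t_{(3)}$ is identical to the $t_{(0)}\mapsto t_{(1)}$ case. The main obstacle is purely the careful accounting in condition (2)–(3) for the larger second step, where the two "hub" vertices $v_{8q+1},v_{8q+2}$ acquire astray degree exactly $3$ and one must confirm no other vertex exceeds this bound.
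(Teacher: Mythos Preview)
Your approach mirrors the paper's: verify Definition~\ref{definition_3_astray}(1)--(3) for $t_{(1)}$ via the shift-plus-mirror structure, then argue the $(m,\ell)_p$ type is inherited by old vertices and established directly for the two new ones; the paper likewise does only $t_{(1)}$ in detail and declares the other steps analogous. Your verification of the three astray-good conditions is correct and matches the paper's computation.

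There is one genuine gap in your $(m,\ell)_p$ argument for the new vertices. You list the non-astray label set of $v_{8q+1}$ correctly as $[1,2q]\cup[6q+1,8q]\cup\{\epsilon_{8q+2}+1-i:i\in[2q+1,6q]\}$, but then extract only the two $1$-APs $[1,2q]$ and $[6q+1,8q]$, which forces your caveat ``whenever $\ell\le 4q$'' --- a restriction not in the hypothesis. The third set $\{\epsilon_{8q+2}+1-i:i\in[2q+1,6q]\}=[\epsilon_{8q+2}-6q+1,\epsilon_{8q+2}-2q]$ is itself a $1$-AP of length $4q\ge 2p$. Including it gives the new vertices $(3,8q)_p$ type; since $\ell\le 8q-1$ automatically (the degree in $K_{8q}$ is $8q-1$), the conclusion follows with no extra hypothesis on $\ell$. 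This is exactly what the paper does, and it is the real reason $m'=\max\{m,3\}$ rather than $\max\{m,2\}$: your ``two $1$-APs plus at most one more from the structure'' is precisely this third interval, and it must be used, not merely alluded to. As a minor aside, the reference to ``Lemma~\ref{1997_structure_theorem}-style bookkeeping'' is misplaced --- that theorem concerns existence of supermagic labelings and says nothing about witness $1$-APs surviving a shift.
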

\begin{proof} The proofs for $t_{(1)}, t_{(2)}$ and $t_{(3)}$ are similar. We only prove $t_{(1)}$ as an example.

First, we show that $t_{(1)}$ is a $3$-astray good labeling.
It is clear that $t_{(1)}$ is injective on $K_{8q+2}\slash A_1$. For any $v_i$, if $i\in [8q]$, then $|S(t_{(1)}, v_i)\cap A_1|= |S(t_{(0)}, v_i)\cap A_0|\le 3$; if $i=8q+1$ or $8q+2$, then $|S(t_{(1)}, v_i)\cap A_1|=1$ by definition. Let
$$
\begin{aligned}L_1=L_0\cup\{v_{8q+1}v_i, i\in[2q]\cup[6q+1, 8q]\}\cup\{v_{8q+2}v_j, j\in [2q+1, 6q]\};\\
H_1=H_0\cup\{v_{8q+1}v_i, i\in[2q+1, 6q]\}\cup\{v_{8q+2}v_j, j\in [2q]\cup[6q+1, 8q]\}.\end{aligned}$$
By definition, it is easy to check that $S(t_{(1)}, L_1)=[8q]\cup(S(t_{(0)}, L_0)+8q)=[(\epsilon_{8q+2}-a_0-1)\slash 2]$, where $a_0=|A_0|$. Similarly, $S(t_{(1)}, H_1)= (S(t_{(0)}, H_0)+8q+1)\cup [\epsilon_{8q+2}-8q+1, \epsilon_{8q+2}]= [(\epsilon_{8q+2}+a_0+1)\slash2+1, \epsilon_{8q+2}].$ So $S(t_{(1)}, A_1)$ locates right in the center of $[\epsilon_{8q+2}]$.

Furthermore, for any $i\in [8q]$,
$$|L_1\cup S(t_{(1)}, v_i)|= |L_0\cup S(t_{(0)}, v_i)|+1= |H_0\cup S(t_{(0)}, v_i)|+1=|H_1\cup S(t_{(1)}, v_i)|;$$
for $i=8q+1$ or $8q+2$, by definition, $|L_1\cup S(t_{(1)}, v_i)|=4q= |H_1\cup S(t_{(1)}, v_i)|.$ So the only thing left is to check the label sum of $D(v_i)\setminus A_1$ for each $i$.  Note that $D(v)$ here is the abbreviation for $D_{K_{8q+2}}(v)$. Since we will deal with the sets $D(v)$ in different complete graphs, we write $D_n(v)$ instead of $D_{K_{n}}(v)$ for simplicity. If $i\in [8q]$,
$$
\begin{aligned}
s(t_{(1)}, D_{8q+2}(v_i)\backslash A_1)&=\sum_{e\in D_{8q+2}(v_i)\cap L_1}t_{(1)}(e)+\sum_{e\in D_{8q+2}(v_i)\cap H_1}t_{(1)}(e)\\
&=\sum_{e\in D_{8q}(v_i)\cap L_0}(t_{(0)}(e)+8q)+ \sum_{e\in D_{8q}(v_i)\cap H_0}(t_{(0)}(e)+8q+1)+ (\epsilon_{8q+2}+1)\\
&=s(t_{(0)}, D_{8q}(v_i)\backslash A_0)+|D_{8q}(v_i)\backslash A_0|(8q+0.5)+ (\epsilon_{8q+2}+1)\\
&=|D_{8q}(v_i)\backslash A_0|\frac{\epsilon_{8q}+1}2+ |D_{8q}(v_i)\backslash A_0|(8q+0.5)+ (\epsilon_{8q+2}+1)\\
&=|D_{8q+2}(v_i)\backslash A_1|\frac{\epsilon_{8q+2}+1}2.
\end{aligned}$$
If $i=8q+1$ or $8q+2$, then $|D_{8q+2}(v_i)\backslash A_1|=8q$, and $s(t_{(1)}, D_{8q+2}(v_i)\backslash A_1)= \frac{(8q+1)}2\cdot 4q+\frac{(\epsilon_{8q+2}+ \epsilon_{8q+2}-8q+1)}2\cdot 4q= \frac{\epsilon_{8q+2}+1}2 \cdot 8q.$ So $t_{(1)}$ is 3-astray good.

Next we  prove that $t_{(1)}$ is of $(m', \ell)_p$ type. For each $i\in [8q]$, the fact that $S(t_{(0)}, v_i)$ is of $(m, \ell)_p$ type implies that $S(t_{(1)}, v_i)$ is also of $(m, \ell)_p$ type. For $i= 8q+1$ or $8q+2$, $S(t_{(1)}, v_i)$ is of $(3, 8q)_p$ type by definition. Since $\ell\leq 8q$, $t_{(1)}$ is of $(m', \ell)_p$ type with $m'=\max\{m, 3\}$.
\end{proof}


Next, we show how to deal with the cases for odd $n$. The following construction from $K_{2k}$ to $K_{2k+1}$ preserves the  $(m, \ell)_p$ type but may not have the $3$-astray good property.

\begin{construction}[odd case]\label{const_odd}
Let $t$ be a $3$-astray good edge labeling on $K_{2k}$ with astray part $A$ of size $a$, lower set $L$ and higher set $H$, such that $|L|=|H|=l$ and $2l+a=\epsilon_{2k}$. Denote $n=2k+1$. We  construct a labeling $t'$ on $K_{n}$ as follows. For any $e\in K_{2k}$,
\begin{equation*}
t'(e)=\left\{
    \begin{array}{ll}
    t(e), & e\in L;\\
    t(e)+k, & e\in A;\\
    t(e)+2k, & e\in H.
    \end{array}
\right.
\end{equation*}
For all $i\in[2k]$,
\begin{equation*}
t'(v_{n}v_i)=\left\{
    \begin{array}{ll}
    l+a+k+i, & i\in[k];\\
    l+i-k, & i\in [k+1, 2k].
    \end{array}
\right.
\end{equation*}
\end{construction}

\begin{lemma}\label{lemma_for_odd_t} Let $n=2k+1$.
The labeling $t'$ on $K_n$ in Construction~\ref{const_odd} is $7n$-almost supermagic.
Further if $n\ge 4p+1$ and $t$ is of $(m, \ell)_p$ type for some $m$ and $\ell$, then $t'$ is of $(m', \ell)_p$ type for $m'=\max\{m, 2\}$.
\end{lemma}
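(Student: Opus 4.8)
\textbf{Proof proposal for Lemma~\ref{lemma_for_odd_t}.}
The plan is to verify the two claims separately, reusing the bookkeeping already carried out for $t$ on $K_{2k}$. First I would record the basic arithmetic of the new vertex $v_n$. Its incident labels are, by Construction~\ref{const_odd}, the values $\{l+a+k+i : i\in[k]\}\cup\{l+i-k : i\in[k+1,2k]\} = [l+1,l+k]\cup[l+a+k+1, l+a+2k]$, which are exactly the $2k$ largest-and-smallest-around-the-center labels of $K_n$ once the old labels are reshuffled: for $e\in L$ nothing changes, for $e\in A$ we shift by $k$, for $e\in H$ we shift by $2k$, and the $2k$ new edges at $v_n$ pick up the $2k$ labels left vacant, namely $[l+1,l+k]$ just above the old $L$-block and $[l+a+k+1,l+a+2k]$ just below the old $H$-block. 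One checks $\epsilon_n=\epsilon_{2k}+2k=2l+a+2k$, so the label set of $v_n$ under $t'$ is the disjoint union of the $k$ consecutive integers immediately above the (shifted) astray block's lower neighbour and the $k$ consecutive integers immediately below its upper neighbour; in particular $s(t', v_n)$ is a constant that I would compute explicitly as $k(l+a+2k+1)+k(2l+k+1) = k(3l+a+3k+2)$, i.e. $k\cdot(\epsilon_n+1)$ since $3l+a+3k+2 = 2(2l+a+2k)+ (2-l-a-k)$—wait, I would just evaluate it directly as the mean of its label set times $2k$, which is $2k\cdot(\epsilon_n+1)/2 = k(\epsilon_n+1)$.

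Next, for the almost-supermagic claim I would bound $|s(t', v_i)-s(t', v_j)|$ for $i,j\in[2k]$ and then compare each to $s(t',v_n)$. For the old vertices, write $D_n(v_i) = D_{2k}(v_i)\cup\{v_nv_i\}$. On $D_{2k}(v_i)$ the labels of the $L$-edges are unchanged, the $A$-edges shift by $+k$, the $H$-edges shift by $+2k$; since $t$ is $3$-astray good we have $|D_{2k}(v_i)\cap A|\le 3$, $|D_{2k}(v_i)\cap L| = |D_{2k}(v_i)\cap H|$, and $s(t, D_{2k}(v_i)\setminus A) = (2k-1-|A\cap D(v_i)|)(\epsilon_{2k}+1)/2$. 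Hence the shifts contribute $k\cdot|A\cap D(v_i)| + 2k\cdot|H\cap D(v_i)|$ plus the unchanged balanced part, and the extra edge $v_nv_i$ contributes either $l+a+k+i$ (if $i\le k$) or $l+i-k$ (if $i>k$). Collecting terms, the only unbalanced pieces across different $i$ are the astray contribution (of absolute value at most $3\cdot(a/2)$ in the $t$-part, i.e. $O(n)$ by Lemma~\ref{4.5n_almost}-style reasoning since $a\le \tfrac32 (2k)$), the term $k|A\cap D(v_i)|$ (at most $3k$), and the new-edge term, which ranges over an interval of length $l+a+2k - (l+1) = a+2k-1 = O(n)$. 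Adding these and also the discrepancy against $s(t',v_n)$ (again $O(n)$ by the same interval estimates), I would check the total is at most $7n$; the constant $7$ comes from summing: roughly $3n$ from the old $3$-astray discrepancy (via $\tfrac{3^2}{2}n/2$-type bounds once one is careful with the shift), up to $3k\le \tfrac32 n$ from the $k|A\cap D(v_i)|$ spread, and up to $2k\approx n$ from the new-edge interval, plus lower-order terms, which I would tune to land at $7n$.

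For the $(m',\ell)_p$-type claim, the key observation is that the maps $e\mapsto t(e)$ on $L$, $e\mapsto t(e)+k$ on $A$, $e\mapsto t(e)+2k$ on $H$ are translations on each block, so any $1$-AP of length at least $2p$ inside $S(t,v_i)$ that lies entirely within one of $L,A,H$ survives as a $1$-AP of the same length in $S(t',v_i)$. A witness family for the $(m,\ell)_p$ type of $S(t,v_i)$ might straddle two blocks, but since $n\ge 4p+1$ the astray part's label block has length $a\le n/2$... actually the safe move is: each witnessing $1$-AP $I$ of length $\ge 2p$ either lies in $L$, or in $H$, or meets $A$; since $t$ is $3$-astray good, $|D(v_i)\cap A|\le 3$, so a $1$-AP in $S(t,v_i)$ contains at most $3$ astray-labels and the translations differ by at most $2k$ across blocks, so I would split such a straddling $1$-AP and keep the longer half, losing at most a bounded amount of total length — but this is wasteful. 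Better: I expect the construction is arranged (as in Lemma~\ref{lemma_stability}) so that $S(t,v_i)$'s witnessing $1$-APs already live in $L\cup H$ away from $A$, hence translate cleanly, giving $(m,\ell)_p$ for the old vertices; and $S(t',v_n)$ is a union of two explicit $1$-APs of length $k\ge 2p$ (using $n\ge 4p+1$), so $v_n$ is of $(2,\ell')_p$ type with $\ell'$ huge, in particular $(2,\ell)_p$. Taking $m'=\max\{m,2\}$ then works for all vertices.

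\textbf{Main obstacle.} The delicate part is pinning the almost-supermagic constant to exactly $7n$ rather than a vaguer $O(n)$: one must track, with the correct signs, the three sources of imbalance (the inherited $3$-astray discrepancy under the block shifts, the $k\cdot|A\cap D(v_i)|$ term, and the width of the interval from which the new edge $v_nv_i$ draws its label), and argue they cannot all be simultaneously extremal in the same direction, or else simply bound each crudely and check the sum still fits under $7n$. A secondary subtlety is making sure the witnessing $1$-APs for the $(m,\ell)_p$ type of $t$ are not broken by the differing shifts on $L$, $A$, $H$ — this is clean if they avoid $A$, which I expect holds by the design of the preceding constructions, but it should be stated explicitly.
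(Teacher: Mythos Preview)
Your plan for the $(m',\ell)_p$ claim is fine and essentially matches the paper: the new vertex $v_n$ has two explicit $1$-APs of length $k\ge 2p$, and for the old vertices the witnessing $1$-APs survive the block shifts (in the paper's actual usage these $1$-APs come from weaving squares and lie entirely in $L$ or $H$, which is the implicit assumption you correctly flag).

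The gap is in the almost-supermagic bound. You write the total shift as $k|A\cap D(v_i)| + 2k|H\cap D(v_i)|$ and then list $k|A\cap D(v_i)|$ as one of the ``unbalanced pieces across different $i$,'' treating the rest (the ``unchanged balanced part'') as essentially constant. But neither $s(t, D(v_i)\setminus A)=(2k-1-|A\cap D(v_i)|)\,(\epsilon_{2k}+1)/2$ nor $2k|H\cap D(v_i)|$ is constant in $i$; each varies with $|A\cap D(v_i)|$, and the first of these varies by order $\epsilon_{2k}=\Theta(n^2)$, which would swamp every $O(n)$ term in your tally. What you are missing is the one-line cancellation the paper uses: since $|L\cap D(v_i)|=|H\cap D(v_i)|$ forces $|H\cap D(v_i)|=(2k-1-|A\cap D(v_i)|)/2$, the full shift collapses to
\[
k\,|A\cap D(v_i)| + 2k\,|H\cap D(v_i)| \;=\; k\,|A\cap D(v_i)| + k\bigl(2k-1-|A\cap D(v_i)|\bigr) \;=\; k(2k-1),
\]
a constant independent of $i$. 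Once you see this, the clean identity
\[
s(t', v_i) \;=\; s(t, v_i) + k(2k-1) + t'(v_n v_i)
\]
holds, and only two genuinely varying terms remain: $s(t,v_i)$, which by Lemma~\ref{4.5n_almost} (with $b=3$ on $K_{2k}$) has spread at most $9k$, and $t'(v_n v_i)$, which lies in an interval of length $a+2k-1\le 5k-1$. Adding gives $|s(t',v_i)-s(t',v_j)|\le 14k<7(2k+1)=7n$; comparison with $s(t',v_n)=k(\epsilon_n+1)$ gives $\le 11.5k<7n$ similarly. Your itemization with a separate ``$k|A\cap D(v_i)|$'' term cannot be ``tuned to land at $7n$'' because without the cancellation the omitted $\Theta(n^2)$ variation is still there.
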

\begin{proof}
 It is easy to check that $t'$ is an edge labeling. The two disjoint $1$-APs of length $k$ ($\geq 2p$) in $S(t', v_{n})$ imply the correctness of the second statement. So we only need to prove that $t'$ is $7n$-almost supermagic, that is, $|s(t', v)-s(t', u)|\leq 7n$ for all $u\neq v$.

Note that $\epsilon_n=\epsilon_{n-1}+2k=\epsilon_{n-1}+2l+a=2k^2+k$. Then $s(t', v_{n})=k(1+\epsilon_n)$. For each $i\in [2k]$, $|D(v_i)\cap A|=1$ or $3$ since $t$ is $3$-astray good. Then $|D(v_i)\cap H|=k-1$ or $k-2$, respectively. For both cases,  $s(t', v_i)=t'(v_{n}v_i)+ s(t, v_i)+ k(2k-1)=t'(v_{n}v_i)+ s(t, v_i)+ \epsilon_{n-1}$.

By Lemma~\ref{4.5n_almost}, $t$ is a $9k$-almost supermagic of $K_{n-1}$. So the difference between $s(t, v_i)$ and the average label sum in $K_{n-1}$ is at most $9k$ for $i\in[2k]$, that is, $|s(t, v_i)-(2k-1)(\frac{1+\epsilon_{n-1}} 2)|\le 9k$. 
On the other hand, $a=|A|\le 2k\times\frac 3 2=3k$. So for any $i\in[2k]$, $|t'(v_{n}v_i)-\frac{1+\epsilon_{n}}2|\le (a+2k)\slash 2\le 2.5k$ by the construction of $t'(v_{n}v_i)$.
Then for any $i\in[2k]$,
$$
\begin{aligned}
|s(t', v_i)-s(t', v_{n})|& =|t'(v_{n}v_i)+s(t, v_i)+ \epsilon_{n-1}-k(1+\epsilon_n)|\\
 & \le \left|t'(v_{n}v_i)-\frac{1+\epsilon_{n}}2\right|+ \left|s(t, v_i)-(2k-1)\left(\frac{1+\epsilon_{n-1}} 2\right)\right|\\
 & \le 2.5 k +9k< 7(2k+1).
\end{aligned}
$$
For $i\neq j$ in $[2k]$,
$$
\begin{aligned}
|s(t', v_i)-s(t', v_j)|& \le |t'(v_{n}v_i)-t'(v_{n}v_j)|+|s(t, v_i)-s(t, v_j)|\\
 & \le a+2k+9k\\
 & \le 14k< 7(2k+1).
\end{aligned}
$$
To sum up, $t'$ is $7n$-almost supermagic.
\end{proof}

We remark that the labeling $t'$ is not $3$-astray good anymore for any astray part $A'$ with $A\subset A'$. That is because the set $S(t',v_n)$ contains too many values near to the center of $[\epsilon_n]$. The argument is as follows. Since $t$ is $3$-astray good and $2k$ is even, for each $v_i, i\in [2k]$, $|D(v_i)\cap A|$ is odd. Here $D(v_i)$ is always the edge set under $K_n$. If on the contrary that $t'$ is also $3$-astray good, but now $n=2k+1$ is odd, then $|D(v_i)\cap A'|$ is even. Hence there exists a new edge $e\in D(v_i)$ such that $e\in A'\setminus A$ for any $i\in [2k]$. Notice that the labels of $A'$ under $t'$ form an interval centered in $[\epsilon_n]$, and the edge in $D(v_i)\backslash A$ which has the value nearest to the center is $v_iv_n$. So $v_iv_n\in A'$ for any $i\in [2k]$. But then $|S(t',v_n)\cap A'|\geq 2k$, which contradicts to Definition~\ref{definition_3_astray}(2).

\subsection{Improving the $(m, \ell)_p$ type}\label{sec-reduc}
The following construction is a key ingredient in the proof of our {\bf Main Result}, which improves the $(m, \ell)_p$ type for the resultant labeling. It generalizes the idea in Construction~\ref{const_T8q} for $T_{8q}$.
\begin{construction}[$4q$ to $8q$]\label{const_t0}
Denote the vertices in $K_{4q}$ as $v_i, i\in [4q]$, and let $t$ be a $3$-astray good labeling on $K_{4q}$ with astray part $A$ of size $a$, and lower set $L$ and higher set $H$, such that $|L|=|H|=l$ and $2l+a=\epsilon_{4q}$.

Now we extend $K_{4q}$ to $K_{8q}$ by adding $4q$ new vertices $u_i, i\in[4q]$.  Let $A'=A\cup \{u_iu_j: v_iv_j\in A\}$. For any edge $e\in E(K_{8q})$ with $e\notin A'$, define $t'(e)\in [\epsilon_{8q}]$ as follows.
\begin{equation*}
t'(e)=\left\{
    \begin{array}{ll}
    t(e), &e\in L;\\
    t(v_iv_j)+l, &e=u_iu_j\text{ s.t. }v_iv_j\in L;\\
    W_{4q}(i, j)+2l, &e=u_iv_j\text{ s.t. }W_{4q}(i, j)\le 8q^2;\\
W_{4q}(i, j)+2l+2a, &e=u_iv_j\text{ s.t. }W_{4q}(i, j)> 8q^2;\\
    t(v_iv_j)+\Delta_\epsilon-l, & e=u_iu_j\text{ s.t. }v_iv_j\in H;\\
t(e)+\Delta_\epsilon, &e\in H.
\end{array}
\right.
\end{equation*}
Here $\Delta_\epsilon=|E(K_{8q})|-|E(K_{4q})|=\epsilon_{8q}-\epsilon_{4q}$. It is easy to check that $t'$ is a bijection from $E(K_{8q})\setminus A'$ to $[\epsilon_{8q}/2-a]\cup [\epsilon_{8q}/2+a+1, \epsilon_{8q}]$. So we can complete $t'$ to be a labeling by assigning edges  in $A'$ with  labels in the remaining set $[\epsilon_{8q}/2-a+1, \epsilon_{8q}/2+a]$.
\end{construction}

\begin{lemma}\label{lemma_from_4q_to_8q} The labeling $t'$ in Construction~\ref{const_t0}  is a $3$-astray good labeling on $K_{8q}$. Further if $q\ge 2p$ and
 $t$ is of $(m, \ell)_p$ type for some $m$ and $\ell$, then $t'$ is of $(m+2, \ell+2q)_p$ type.
\end{lemma}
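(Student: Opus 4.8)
The plan is to verify the two assertions separately: first that $t'$ is $3$-astray good with astray part $A'$, then that $t'$ inherits and improves the $(m,\ell)_p$ type.

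For the $3$-astray good property, I would check the three conditions of Definition~\ref{definition_3_astray} in turn. Condition~(1): since $|A'|=2a$ by construction and $S(t',A')=[\epsilon_{8q}/2-a+1,\epsilon_{8q}/2+a]$ by the way we defined $t'$ on the non-astray edges, the astray label set is exactly the central interval; one also needs $2a\equiv\epsilon_{8q}\bmod 2$, which follows from $a\equiv\epsilon_{4q}\bmod 2$ together with parity of binomial coefficients. Condition~(2): the lower set is $L'=L\cup\{u_iu_j:v_iv_j\in L\}\cup\{u_iv_j:W_{4q}(i,j)\le 8q^2\}$ and the higher set $H'$ is its mirror; I would confirm $|D(v)\cap A'|\le 3$ for every vertex (for the $v_i$'s this is inherited from $t$; for the $u_i$'s it equals $|D(v_i)\cap A|$ under $t$, again $\le 3$) and that $|D(v)\cap L'|=|D(v)\cap H'|$. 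For a $v_j$ this splits as the $K_{4q}$-contribution (balanced since $t$ is astray good) plus the bipartite contribution $|\{i:W_{4q}(i,j)\le 8q^2\}|=|\{i:W_{4q}(i,j)>8q^2\}|=2q$ by Lemma~\ref{properties_for_weaving_matrix}(4); for a $u_i$ it is symmetric. Condition~(3): the label sum over $D(v)\setminus A'$ must equal $|D(v)\setminus A'|\cdot(\epsilon_{8q}+1)/2$. Here the key point is that each shift used in the definition of $t'$ (namely $0$, $l$, $2l$, $2l+2a$, $\Delta_\epsilon-l$, $\Delta_\epsilon$) is symmetric about $(\epsilon_{8q}+1)/2$ when paired up ($L$ with $H$, the two $u_iu_j$ families, the two bipartite families); combined with the fact that $t$ satisfies condition~(3) on $K_{4q}$ and that $W_{4q}$ is supermagic with known row/column sum $32q^3+2q$ (Lemma~\ref{properties_for_weaving_matrix}(2)), this is a routine though slightly lengthy bookkeeping computation, exactly as in the proof of Lemma~\ref{8q}.

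For the $(m,\ell)_p$ type, suppose $t$ is of $(m,\ell)_p$ type, witnessed for each vertex $v_i$ of $K_{4q}$ by at most $m$ disjoint $1$-APs in $S(t,v_i)$ of total length $\ge\ell$. Since $t'$ restricted to $L$, to $H$, and to the two $u_iu_j$-families is $t$ shifted by a constant on each of those families (and the lower set of $t$ maps to the lower set of $t'$ as a block, likewise the higher set), each witnessing $1$-AP of $t$ at $v_i$ still sits as a $1$-AP inside $S(t',v_i)$; the same holds at $u_i$ because $S(t',u_i)$ contains shifted copies of $S(t,v_i)\cap L$ and $S(t,v_i)\cap H$ glued along the same block structure. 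So every vertex already has $m$ disjoint $1$-APs of total length $\ge\ell$ inherited from $t$. On top of this, the bipartite part contributes two more: by Lemma~\ref{properties_for_weaving_matrix}(3) each row and column of $W_{4q}$ contains two disjoint $1$-APs of length $q\ge 2p$, and after the (piecewise) shift by $2l$ or $2l+2a$ these remain $1$-APs in $S(t',v_j)$ and $S(t',u_i)$ — here one should check the shift does not split a length-$q$ $1$-AP, which holds because the little-square $1$-APs lie entirely within one entry-block of the base square $B$, hence within one of the two value regimes $\le 8q^2$ or $>8q^2$. Thus each vertex has at most $m+2$ disjoint $1$-APs of total length $\ge\ell+2q$, each of length $\ge 2p$, so $t'$ is of $(m+2,\ell+2q)_p$ type.

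The main obstacle is the arithmetic in Condition~(3): one must confirm that the six shift constants, the supermagic row/column sum of $W_{4q}$, and the astray-good sum identity for $t$ all combine so that $s(t',D(v)\setminus A')$ is independent of $v$ and equals the target $|D(v)\setminus A'|(\epsilon_{8q}+1)/2$, handling the $v_i$ and $u_i$ vertices (and within each, the cases $|D(v)\cap A|=1$ versus $3$) separately. This is mechanical but error-prone; the cleanest way is to group edges into centrally-symmetric pairs so that each pair contributes exactly $(\epsilon_{8q}+1)$ times its count, reducing everything to the already-known balance of $t$ and the known row sum of $W_{4q}$. Everything else — injectivity of $t'$ off $A'$, the interval structure of $S(t',A')$, and the $1$-AP inheritance — is immediate from the construction.
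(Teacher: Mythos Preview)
Your proposal is correct and takes essentially the same approach as the paper: verify the three conditions of Definition~\ref{definition_3_astray} directly (the paper handles Condition~(3) by an explicit computation of $s(t',D_{8q}(v_j)\setminus A')$ using the astray-good identity for $t$ together with the column sum $32q^3+2q$ of $W_{4q}$, rather than your symmetric-pairing reformulation, but the two are equivalent), and then observe that the weaving square contributes two fresh length-$q$ $1$-APs per vertex on top of the $m$ inherited witnesses. The paper is terser than you on the $(m+2,\ell+2q)_p$ claim and, like you, does not spell out why an inherited witness $1$-AP of $t$ cannot be broken by the different shifts applied to $L$, $A$, and $H$; in the recursive application every witness sits entirely inside $L$ or $H$, so this point is harmless there.
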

\begin{proof}  Since $K_{4q}$ is a subgraph of $K_{8q}$ in our construction, we use $D_{4q}(v)$ and $D_{8q}(v)$ to distinguish the sets in $K_{4q}$ and $K_{8q}$, following the notations in the proof of Lemma~\ref{lemma_stability}.  Since
 $t$ is a $3$-astray good labeling on $K_{4q}$ with astray part $A$, lower set $L$ and higher set $H$,  then for each $v_i$, $|D_{4q}(v_i)\cap A|\leq 3$,  $|D_{4q}(v_i)\cap L|=|D_{4q}(v_i)\cap H|$, and $s(t, D_{4q}(v_i)\backslash A)=|D_{4q}(v_i)\backslash A| \cdot (\epsilon_{4q}+1)/2.$

 Next, we show that $t'$ is a $3$-astray good labeling on $K_{8q}$. It is clear that $S(t',A')=[\epsilon_{8q}/2-a+1, \epsilon_{8q}/2+a]$. By definition, the lower set and the higher set for $t'$ are as follows.
$$L'=L\cup \{u_iu_j: v_iv_j\in L\}\cup\{u_iv_j: W_{4q}(i, j)\le 8q^2\},$$
$$H'=H\cup \{u_iu_j: v_iv_j\in H\}\cup\{u_iv_j: W_{4q}(i, j)> 8q^2\}.$$
Since $|L|=|H|$, combining the properties of $W_{4q}$,  we have $|L'|=|H'|$.

{
Then for each vertex $v_j$,  $j\in [4q]$,  $|A'\cap D_{8q}(v_j)|=|A\cap D_{4q}(v_j)|\le 3$, \[|L'\cap D_{8q}(v_j)|=|L\cap D_{4q}(v_j)|+|\{u_iv_j:  W_{4q}(i, j)\le 8q^2\}|=|L\cap D_{4q}(v_j)|+2q,\] and similarly, $|H'\cap D_{8q}(v_j)|=|H\cap D_{4q}(v_j)|+2q$. Since $|L\cap D_{4q}(v_j)|=|H\cap D_{4q}(v_j)|$, we have $|L'\cap D_{8q}(v_j)|=|H'\cap D_{8q}(v_j)|$. For vertex $u_i$, the same argument shows $|L'\cap D_{8q}(u_i)|=|H'\cap D_{8q}(u_i)|$.

Finally, we need to show that for each vertex $v$,  $s(t', D_{8q}(v)\setminus A')=|D_{8q}(v)\setminus A'| \cdot (\epsilon_{8q}+1)/2.$ By symmetry, we only prove it for $v_j$, $j\in [4q]$. It is easy to see that $D_{8q}(v_j)\setminus A'=(D_{4q}(v_j)\setminus A)\cup \{u_iv_j:i\in [4q]\}$, and $(D_{4q}(v_j)\setminus A)$ is halved into $D_{4q}(v_j)\cap L$ and $D_{4q}(v_j)\cap H$. Then
\begin{align*}
s(t', D_{8q}(v_j)\setminus A')=&\sum_{e\in D_{4q}(v_j)\setminus A} t'(e)+\sum_{i=1}^{4q} t'(u_{i}v_j)\\
=&\sum_{e\in D_{4q}(v_j)\cap L} t(e)+\sum_{e\in D_{4q}(v_j)\cap H} (t(e)+\Delta_\epsilon)+\\&\sum_{i:W_{4q}(i, j)\le 8q^2} (W_{4q}(i, j)+2l)+\sum_{i:W_{4q}(i, j)> 8q^2} (W_{4q}(i, j)+2l+2a)\\
=& \sum_{e\in D_{4q}(v_j)\setminus A} t(e)+ \Delta_\epsilon  |D_{4q}(v_j)\cap H| + \sum_{i=1}^{4q}W_{4q}(i, j)+ 8ql+4qa.
\end{align*}
Since $\sum_{e\in D_{4q}(v_j)\setminus A} t(e)=s(t, D_{4q}(v_j)\backslash A)=|D_{4q}(v_j)\backslash A| \cdot (\epsilon_{4q}+1)/2$, we have $\sum_{e\in D_{4q}(v_j)\setminus A} t(e)+ \Delta_\epsilon  |D_{4q}(v_j)\cap H|=|D_{4q}(v_j)\setminus A|\cdot (\epsilon_{8q}+1)/2$. Since
 $\sum_{i=1}^{4q}W_{4q}(i, j)=32q^3 +2q$, and $2l+a=\epsilon_{4q}=8q^2-2q$, we have $\sum_{i=1}^{4q}W_{4q}(i, j)+ 8ql+4qa=2q\cdot (\epsilon_{8q}+1)$. So $s(t', D_{8q}(v_j)\setminus A')=|D'(v_j)\setminus A'|\cdot (\epsilon_{8q}+1)/2$.

  Combining all, we have shown that $t'$ is a $3$-astray good labeling on $K_{8q}$. Besides, we see that the weaving square in Construction~\ref{const_t0} introduces two $1$-APs of length $q$ for each $D_{8q}(v)\backslash D_{4q}(v)$, so we have the second statement.} \end{proof}
Combining Lemma~\ref{lemma_from_4q_to_8q} and Lemma~\ref{lemma_stability} for the result of $t_{(2)}$, we have the following corollary.

\begin{corollary}\label{coro_from_4q_to_8q}
Let $p$, $q\ge 2p$, $m$ and $\ell$ be positive integers. If there exists a $3$-astray good labeling of $(m, \ell)_p$ type on $K_{4q}$,  then there exists a $3$-astray good labeling of $(m+2, \ell+2q)_p$ type on $K_{8q}$, and  a $3$-astray good labeling of $(m+2, \ell+2q)_p$ type on $K_{8q+4}$.
\end{corollary}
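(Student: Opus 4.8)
The plan is to derive Corollary~\ref{coro_from_4q_to_8q} as a direct consequence of the two lemmas just established, namely Lemma~\ref{lemma_from_4q_to_8q} and Lemma~\ref{lemma_stability}. Suppose we are given a $3$-astray good labeling $t$ of $(m,\ell)_p$ type on $K_{4q}$ with $q\ge 2p$. First I would note that the hypothesis $q\ge 2p$ is exactly the side condition required by Lemma~\ref{lemma_from_4q_to_8q}, so applying that lemma to $t$ produces a labeling $t'$ on $K_{8q}$ that is $3$-astray good and of $(m+2,\ell+2q)_p$ type. This immediately gives the first assertion of the corollary.

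For the second assertion — existence on $K_{8q+4}$ — the idea is to feed $t'$ into Construction~\ref{const_t1} and invoke the $t_{(2)}$ case of Lemma~\ref{lemma_stability}, since $8q+4 = 8q + 2\cdot 2$. Here I would need to check that the hypotheses of Lemma~\ref{lemma_stability} are met: it requires a $3$-astray good input on $K_{8q}$ (which $t'$ is) together with the condition $q\ge 2p$ for the type to be preserved, which is again part of our standing assumption. The conclusion of that lemma for $i=2$ says $t_{(2)}$ on $K_{8q+4}$ is $3$-astray good and of $(m'',\ell+2q)_p$ type with $m''=\max\{m+2,3\}$. Since $m\ge 1$ (we may assume the relevant parameters are positive, or simply note $m\ge 0$ so $m+2\ge 2$, and in the genuinely useful regime $m\ge 1$), we have $m''=m+2$, matching the statement; even in the degenerate case one can absorb the discrepancy by recalling that any $(m,\ell)_p$ type is also an $(m',\ell)_p$ type for $m'\ge m$, so replacing $m$ by $\max\{m,1\}$ at the outset is harmless. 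This yields the desired $3$-astray good labeling of $(m+2,\ell+2q)_p$ type on $K_{8q+4}$.

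I do not expect any serious obstacle: the corollary is a bookkeeping composition of two results proved immediately before it. The only point demanding a little care is the arithmetic of the type parameter $m$ under the $\max$ appearing in Lemma~\ref{lemma_stability} — one must confirm that the extra ``$+2$'' from the weaving-square step already dominates the constant $3$ coming from the two new astray vertices added in Construction~\ref{const_t1}, so that no further inflation of $m$ occurs and the statement reads cleanly as $(m+2,\ell+2q)_p$. A one-line remark recording $\max\{m+2,3\}=m+2$ whenever $m\ge 1$ (together with the monotonicity of the type in $m$) settles this, and the proof is then a two-sentence assembly.
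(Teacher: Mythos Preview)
Your proposal is correct and follows exactly the paper's approach: the paper simply states that the corollary follows by combining Lemma~\ref{lemma_from_4q_to_8q} with the $t_{(2)}$ case of Lemma~\ref{lemma_stability}. Your careful note that $\max\{m+2,3\}=m+2$ because $m\ge 1$ is a positive integer (as hypothesized) is the only detail needed, and you handle it correctly.
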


\subsection{Proof of Theorem~\ref{theorem_asymptotic_best}}\label{sec-pr}

%
%

\begin{proof}
Let $s$ be the smallest integer such that $\frac 1 {2^s}<\varepsilon$. Define a non-increasing sequence $n_0, n_1, \ldots, n_s$ of positive integers as follows. Let $n_0=n$, $n_1=8\lfloor\frac n 8\rfloor$, and for any $ 2\le i\le s$, $n_{i}=4\lfloor\frac {n_{i-1}} 8\rfloor$. By the definition, $n_i, i\ge 2$ are all multiples of $4$. Since $n$ is large enough and $p=o(n)$, we can assume that $n_s\ge 8p+4$.

Our strategy starts from a good labeling $t_s$ of $K_{n_s}$, then applies Constructions~\ref{const_t1}-\ref{const_t0}  recursively, to construct a labeling $t_{i-1}$ of  $K_{n_{i-1}}$ from $t_i$ of $K_{n_{i}}$, for all $i\in [s]$. In each step, the resultant labeling has a better $(m, \ell)_p$ type, thus reduces the $p$-robustness ratio step by step, until the final labeling $t=t_0$ of $K_{n_0}=K_n$ has a ratio approaching $1/2$.

If $n_s\equiv 0\mod 8$, let $t_s=T_{n_s}$ in Construction~\ref{const_T8q}. If $n_s\equiv 4\mod 8$, then $n_s-4\equiv 0\mod 8$. Let $t_s$ be the output $t_{(2)}$ in Construction~\ref{const_t1} by inputting $t_{(0)}=T_{n_s-4}$ in Construction~\ref{const_T8q}. For both cases, $t_{s}$ is $3$-astray good and of $(m_s, \ell_s)_p$ type on $K_{n_s}$ with $m_s=3$ and $\ell_s=\frac{n_s-4}{4}$.


For $2\le i\le s$, suppose that we have got a $3$-astray good labeling of $(m_i, \ell_i)_p$ type on $K_{n_i}$. Since $n_{i-1}=2n_i$ or $2n_i+4$, by Corollary~\ref{coro_from_4q_to_8q}, there exists a $3$-astray good labeling $t_{i-1}$ on $K_{n_{i-1}}$ of $(m_{i-1}, \ell_{i-1})_p$ type such that $m_{i-1}\le m_i+2$ and $\ell_{i-1}\ge \ell_i+\frac{n_i}2$.
Especially when $i=2$, there exists a $3$-astray good labeling $t_1$ on $K_{n_1}$ of $(m_1, \ell_1)_p$ type with $m_1\le 2s+1$ and $\ell_1\ge \frac{n_s-4}4+\frac{n_s}2+\cdots+\frac{n_2}2$. Since
 $\frac{n_{i-1}}2 \ge n_i\ge \frac {n_{i-1}} 2 -2$ for any $i\in [2, s]$, we have ${ \ell_1\ge (1-\frac 1 {2^s})n_2-2s}$. Then applying Construction~\ref{const_t1} for even $n$, or further Construction~\ref{const_odd} for odd $n$, we get a $7n$-almost supermagic labeling $t=t_0$ of $(m_1, \ell_1)_p$ type on $K_n$ from $t_1$ on $K_{n_1}$.


It is left to compute the $p$-robustness ratio of $t$.
By Lemma~\ref{1-AP_dominent_lemma}, for any $p$-swap $\theta$ and any two different vertices $u, v\in V(K_n)$, $|s(\theta t, u)-s(\theta t, v)|\le 7n+ 2m_1p^2+2p(n-\ell_1-1)$. When $n$ goes to infinity and $p=o(n)$, applying $m_1\le 2s+1$, $\ell_1\ge (1-\frac 1 {2^s})n_2-2s$ and $n_2\ge\frac{n-7} 2$ into the inequality, we have $|s(\theta t, u)-s(\theta t, v)|=(1+\frac 1 {2^s}) pn+o(pn)$. Then $r(t, n)\le \frac 1 2+\frac 1 {2^{s+1}}<\frac 1 2+\epsilon$.
\end{proof}


Theorem~\ref{theorem_asymptotic_best} shows that there exists a $7n$-almost supermagic labeling of $K_n$ whose robustness ratio is close to $1/2$.  Another direction is that, if we weaken the ratio, for example, to at most $\frac 3 4$, then we can pursue a stronger balancing property for all large enough $n$. In fact, by applying Construction~\ref{const_t1} with $T_{8q}$ directly and carefully labeling the edges in the astray parts, we can get  an $\frac n 2$-almost supermagic labeling $T_n$ of $K_n$ for all even $n$, and then an $n$-almost supermagic labeling for all odd $n$ by Construction~\ref{const_odd}, whose robustness ratio is $\frac 3 4$. The labeling plans for the astray parts $A_i$ in $T_{8q+2i}$  of $K_{8q+2i}$, $i\in [3]$, are as follows.


\begin{itemize}
\item[(1)] In $T_{8q+2}$, set $T_{8q+2}(v_{2k-1}v_{2k})=16q^2+4q+k$  for all $k\in[4q+1]$.
\item[(2)] In $T_{8q+4}$, set $T_{8q+4}(v_{2k-1}v_{2k})=16q^2+12q+k$ for all $k\in[2q+1]$; set $T_{8q+4}(v_{2k-1}v_{2k})=16q^2+12q+4+k$ for all $k\in[2q+2, 4q+2]$. Besides, the values of $T_{8q+4}$ on $v_{8q+4}v_{8q+1}$, $v_{8q+3}v_{8q+2}$, $v_{8q+3}v_{8q+1}$ and $v_{8q+4}v_{8q+2}$ are $16q^2+14q+2$, $16q^2+14q+3$, $16q^2+14q+4$ and $16q^2+14q+5$, respectively.
\item[(3)] The values of $T_{8q+6}$ on $v_{8q+4}v_{8q+1}$, $v_{8q+3}v_{8q+2}$, $v_{8q+3}v_{8q+1}$ and $v_{8q+4}v_{8q+2}$ are $16q^2+22q+6$, $16q^2+22q+7$, $16q^2+22q+9$ and $16q^2+22q+10$, respectively. Besides,
\begin{equation*}
T_{8q+6}(v_{2k-1}v_{2k})=\left\{
    \begin{array}{ll}
    16q^2+20q+4+k, & k\in[2q+1],\\
   16q^2+22q+8, & k=2q+2,\\
   16q^2+20q+8+k, & k\in[2q+3, 4q+3].
    \end{array}
\right.
\end{equation*}
\end{itemize}

\section{Conclusion}\label{conc.}
Motivated by the access balancing problem in dynamical distributed storage system, we study the robustness of almost supermagic edge labelings of  graphs under limited-magnitude swaps.
In particular, we construct $O(n)$-almost supermagic labelings on complete graph $K_n$ that are asymptotically optimal in terms of the $p$-robustness ratio for  any large $n$. There are several interesting problems which deserve further study for complete graphs.
\begin{itemize}
\item[(1)] Find constructions of $O(n)$-almost supermagic labelings on $K_n$ whose $p$-robustness ratios converge to $\frac 1 2$ faster than ours for all admissible $p$.
\item[(2)] Find constructions of $o(n)$-almost supermagic labelings on $K_n$ with small $p$-robustness ratio.

\end{itemize}

 Since optimal FR codes can also be constructed from  Tur\'{a}n graphs, it is interesting to study the same problem for  Tur\'{a}n graphs. Similar estimate can be made on the robustness as in Theorem~\ref{bou}, which helps to define the robustness ratio. It is worth to mention again that in the view of labeling, the weaving square is indeed a supermagic labeling on the Tur\'{a}n graph $T(2, 8q)$ (see Eq.~(\ref{primary_construction})) and can be proved to be optimal in terms of robustness by simple arguments. Our method of using weaving squares can also be extended to construct such optimal labelings for any Tur\'{a}n graph $T(r,n)$ with $4r\mid n$.  
 Nevertheless, for the whole problem, we still have a lot of unexplored areas. We list some of them as follows.
 \begin{itemize}
 \item[(3)] Solve the problem for Tur\'{a}n graph $T(r, n)$ for any $r\mid n$ .
 \item[(4)] Consider the graph $K_n$ but with a different popularity labeling system. For example, the labels are directly proportional to the popularities  obeying the Zipf law \cite{breslau1999web}.
 \item[(5)] Consider other (hyper)graphs which deduce optimal FR codes. For example,  transversal designs and generalized polygons can produce
optimal FR codes whose replication numbers go larger than $2$ \cite{Turan_Ski2015}.
 \end{itemize}

\appendices
\section{Old constructions are not robust enough}\label{sec-old_constructions}
In this appendix, we show that factorial constructions in  \cite{Colbourn2021} and the inductive constructions mainly in \cite{stewart_1967} of (almost) supermagic labelings
 on $K_n$ are not robust enough under $p$-swaps by applying Lemmas~\ref{slice_lemma}, ~\ref{vertex_link_lemma} and Corollary~\ref{link_corollary}.

\subsection{The Factorial Constructions are Not Good Enough}

In \cite{Colbourn2021} factor decompositions are used to construct supermagic or 1-almost supermagic labelings for $K_n$ when $n\equiv 0, 2, 3\mod 4$. We denote such constructions as $t_f$. Some facts for $t_f$ are listed below.

\begin{fact}\label{fact_for_factor_cons}
\begin{itemize}
\item[(1)] When $n\equiv 0, 2\mod 4$ {(see Lemma 7 and Lemma 3 in~\cite{Colbourn2021})}, there exists a $1$-factor decomposition $\{F_1, F_2, \ldots, F_{n-1}\}$ of $K_n$ such that for any $i\in [n-1]$, $S(t_f, F_i)=[(i-1)\frac n 2+1, i\frac n 2]$.
\item[(2)] When $n\equiv 3 \mod 4$ {(see Lemma 4 in~\cite{Colbourn2021})}, there exists a $2$-factor decomposition $\{F_1, F_2, \ldots, F_{\frac{n-1}2}\}$ of $K_n$ such that for any $i\in [\frac{n-1}2]$, $S(t_f, F_i)=[(i-1)n+1, in]$.
\end{itemize}
\end{fact}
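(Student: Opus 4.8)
The plan is to read off the claimed block structure directly from the explicit labelings constructed in \cite{Colbourn2021}: in each of the three residue classes the construction there first fixes a factor decomposition of $K_n$ and then labels the factors one at a time, assigning a single consecutive block of integers to each factor.

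First, for $n\equiv 0,2\bmod 4$, one starts from a proper $(n-1)$-edge-colouring of $K_n$, whose colour classes $F_1,\dots,F_{n-1}$ are perfect matchings, each of size $n/2$. The constructions in \cite[Lemma~3]{Colbourn2021} (for $n\equiv 2\bmod 4$) and \cite[Lemma~7]{Colbourn2021} (for $n\equiv 0\bmod 4$) assign to the $i$-th matching the labels of the $i$-th block $[(i-1)\tfrac{n}{2}+1,\, i\tfrac{n}{2}]$, distributing them inside the block so that the vertex label sums become equal, or differ by at most $1$ when $n\equiv 0\bmod 4$ (so the $1$-almost slack lives entirely inside a block and does not affect which labels go to which factor). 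Since $\sum_{i=1}^{n-1}\tfrac{n}{2}=\binom{n}{2}=\epsilon$, the blocks partition $[\epsilon]$, and hence $S(t_f,F_i)=[(i-1)\tfrac{n}{2}+1, i\tfrac{n}{2}]$. The $n\equiv 2\bmod 4$ case is in fact exactly the labeling displayed in Example~\ref{exg}, with $2s+1=n/2$.

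Second, for $n\equiv 3\bmod 4$, $n$ is odd, so $K_n$ admits a Hamiltonian (i.e.\ $2$-factor) decomposition into $\tfrac{n-1}{2}$ cycles $F_1,\dots,F_{(n-1)/2}$, each with $n$ edges; the construction in \cite[Lemma~4]{Colbourn2021} labels the $i$-th cycle using the block $[(i-1)n+1,\, in]$. Since $\sum_{i=1}^{(n-1)/2}n=\binom{n}{2}=\epsilon$, these blocks again tile $[\epsilon]$, giving $S(t_f,F_i)=[(i-1)n+1, in]$.

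All computations involved are bookkeeping: counting the edges in each factor and checking that the block lengths sum to $\epsilon$, so there is no real obstacle. The single point requiring care is to confirm, by inspecting the explicit formulas of \cite[Lemmas~3, 4, 7]{Colbourn2021}, that the integers used within the $i$-th factor form one interval rather than being interleaved across factors, and to reconcile the factor-indexing convention there (where the factors may be numbered starting from $0$) with the one used in Fact~\ref{fact_for_factor_cons}, which is a harmless reindexing of $i$. Given this, and noting that Example~\ref{exg} already carries out the verification for $n\equiv 2\bmod 4$, both parts of the Fact follow.
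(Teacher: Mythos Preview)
Your proposal is correct and matches the paper's treatment: the paper does not prove this Fact at all but simply records it as a consequence of the explicit constructions in \cite[Lemmas~3, 4, 7]{Colbourn2021}, exactly as you describe. Your added bookkeeping (block sizes summing to $\epsilon$, reindexing of factors) is sound and just makes explicit what the paper leaves to the reader.
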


When $n\equiv 0, 2\mod 4$, for construction $t_f$, $[\epsilon]$ is decomposed into $n-1$ pieces of $1$-APs $I_i=S(t_f, F_i)=[(i-1)\frac n 2+1, i\frac n 2]$ for all $i\in [n-1]$. Since each $F_i$ is a $1$-factor, for any vertex $v$ there exists exactly one edge in $D(v)\cap F_i$. So by Lemma~\ref{slice_lemma}, $r(p, t_f)=1$ for any feasible $p$.

When $n\equiv 3\mod 4$, for construction $t_f$ and any vertex $v$, consider disjoint $1$-APs $I_i=S(t_f, F_i)=[(i-1)n+1, in]$ for all $i\in[\frac{n-1}2]$ forming a decomposition of $[\epsilon]$. The length of any $I_i$ is $n>4p$. For any $i\in\frac{n-1}2$, $|S(t_f, v)\cap I_i|=2$. So by Corollary~\ref{link_corollary}, we can choose $h$ and $p^*$ near $p$ such that $r(p, t_f)\ge r(p^*, t_f)$ can be as near to $1$ as possible.

\subsection{The Inductive Constructions are Not Good Enough}

The inductive constructions use known supermagic labelings on small complete graphs to construct new supermagic labelings on a larger graph. We consider three additive constructions for $n\equiv 1, 2, 3\mod 4$, respectively in \cite{stewart_1967}, and one multiplicative construction for $n\equiv 1\mod 4$ in \cite{Colbourn2021}.

In the former three cases, a base labeling on $K_n$ is used to construct a new labeling on $K_{n+4}$. Here we list some facts we need in the following proofs. For more details one may refer to the original paper.

\begin{fact}\label{fact_for_add_induction}
We denote the base supermagic labeling on $K_n$ as $t_B$, and the new supermagic labeling on $K_{n+4}$ as $t_N$. The vertices of $K_i$, $i=n$ or $n+4$ are denoted as $v_1, v_2, \ldots, v_i$. 
\begin{itemize}
\item[(1)] When $n=4k+3$ {(see Theorem 3, case 1 in~\cite{stewart_1967})}, for all edges $e\in K_n$, $t_N(e)=t_B(e)+8k+9$. Furthermore, for any $v_i, i\in[n]$, in $\{t_N(v_iv_j): j\in [n+1, n+4]\}$ there are exactly two members in $[8k+9]$ and two members in $[\epsilon_{n+4}-8k-8, \epsilon_{n+4}]$.
\item[(2)] When $n=4k+1$ {(see Theorem 3, case 2 in~\cite{stewart_1967})}, for all edges $e\in K_n$, $t_N(e)=t_B(e)+8k+5$. Furthermore, for any $v_i, i\in[n]$, in $\{t_N(v_iv_j): j\in [n+1, n+4]\}$ there are exactly two members in $[8k+5]$ and two members in $[\epsilon_{n+4}-8k-4, \epsilon_{n+4}]$.
\item[(3)] When $n=4k+2$ {(see Theorem 4 in~\cite{stewart_1967})}, the induction needs the base case $t_B$ to satisfy all requirements for being $1$-astray good (see Definition~\ref{definition_3_astray}) except the third one, i.e., we do not require $s(t_B, D(v)\backslash A)=|D(v)\backslash A| \cdot (\epsilon_n+1)/2$ for any vertex $v$. We call a labeling satisfying this weak version of $1$-astray goodness as {\emph{a weak $1$-astray good labeling}}. Then under this definition, the output
    $t_N$ is also weak $1$-astray good. For any $e$ such that $t_B(e)\le 2k(2k+1)$, $t_N(e)=t_B(e)+8k+6$; for any $e$ such that $t_B(e)> (2k+1)^2$, $t_N(e)=t_B(e)+8k+8$.
    For any $v_i, i\in[n]$, in $\{t_N(v_iv_j): j\in [n+1, n+4]\}$ there are exactly two members in $[8k+6]$ and two members in $[\epsilon_{n+4}-8k-5, \epsilon_{n+4}]$.
\end{itemize}
\end{fact}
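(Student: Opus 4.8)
The plan is to verify each of the three cases by recalling the corresponding additive construction of $t_N$ from~\cite{stewart_1967} and checking the two asserted properties directly: the \emph{global shift} of the old labels, and the \emph{location} of the four new labels incident to each old vertex $v_i$, $i\in[n]$. The shift statements ($t_N(e)=t_B(e)+c$ on $K_n$, with $c=8k+9$, $8k+5$, or the piecewise $8k+6$/$8k+8$ in case (3)) are immediate from the way the construction is defined on old edges, so the only substance is the location property for the cross edges $v_iv_j$, $j\in[n+1,n+4]$.

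First I would set up the bookkeeping shared by all cases. Passing from $K_n$ to $K_{n+4}$ adds $\epsilon_{n+4}-\epsilon_n=4n+6$ edges, of which $\binom 4 2=6$ lie among the four new vertices and $4n$ join an old vertex to a new one. The available ``low'' labels form the block $[1,c]$ and the ``high'' labels the block $[\epsilon_{n+4}-c+1,\epsilon_{n+4}]$, each consisting of $c$ integers. The core of Stewart's scheme is to organize these into $c$ complementary pairs $\{a,\epsilon_{n+4}+1-a\}$ and to distribute two whole pairs to each old vertex across its edges $v_iv_{n+1},\dots,v_iv_{n+4}$. This automatically places exactly two low and two high labels at each $v_i$, which is the claimed location property, and it keeps the new contribution to $s(t_N,v_i)$ independent of $i$, hence preserves the supermagic property together with the uniform shift.

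For cases (1) and (2) I would close the argument with a consistency count. The $4n=16k+12$ (resp.\ $16k+4$) cross edges consume $2n$ low and $2n$ high labels, leaving $c-2n=3$ low and $3$ high labels, which is exactly the number needed for the $6$ internal edges among $v_{n+1},\dots,v_{n+4}$; arranging those six so that the four new vertices stay balanced finishes the verification. This count simultaneously certifies that the low and high blocks have the stated endpoints, namely $[8k+9]$ and $[\epsilon_{n+4}-8k-8,\epsilon_{n+4}]$ for $n=4k+3$, and $[8k+5]$ and $[\epsilon_{n+4}-8k-4,\epsilon_{n+4}]$ for $n=4k+1$.

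Case (3) is the delicate one, and I expect it to be the main obstacle. Here the shift is piecewise, pushing $t_B(e)\le 2k(2k+1)$ up by $8k+6$ and $t_B(e)>(2k+1)^2$ up by $8k+8$, so that the central astray block of $t_B$ (the $2k+1$ labels of $A$, since here $\epsilon_n=8k^2+6k+1$ and $\epsilon_{n+4}=8k^2+22k+15$) is carried into the center of $[\epsilon_{n+4}]$. The plan is to track the three parts $L,A,H$ through the shift and to show that after reassembling the shifted old edges together with the new cross and internal edges one again obtains a central astray interval, that every vertex still meets at most one astray edge, and that $|D(v)\cap L|=|D(v)\cap H|$ for all $v$ (the weak $1$-astray goodness of Definition~\ref{definition_3_astray}, dropping only condition~(3)). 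The hard part is precisely this reassembly: unlike cases (1) and (2), the low block here leaves only $c-2n=2$ low and $2$ high labels after the cross edges, so the two surviving labels and the displaced central labels must be shown to fit together with the new-vertex edges so that the location claim (two members in $[8k+6]$, two in $[\epsilon_{n+4}-8k-5,\epsilon_{n+4}]$) holds and the lower/higher balance is not broken. This is a careful but routine interval-endpoint matching against the explicit assignment in~\cite[Theorem~4]{stewart_1967}.
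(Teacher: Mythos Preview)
The paper does not prove this Fact at all: it is stated as a direct recollection of Stewart's constructions, with the explicit remark ``For more details one may refer to the original paper.'' Your proposal therefore already goes further than the paper by sketching a verification, and your bookkeeping (the $4n+6$ new edges, the complementary low/high pairs, the leftover $3+3$ or $2+2$ labels for the six internal edges, and the tracking of the astray interval in case~(3)) is sound and is exactly what one would do to check the citation. There is nothing to compare against in the paper's own treatment.
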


In \cite{Colbourn2021} for $n\equiv 1\mod 4$, an inductive construction using supermagic labeling on $K_{2s+1}$ for some even $s$ to construct supermagic labeling on $K_{4s+1}$ is given {(see Lemma 6 in~\cite{Colbourn2021})}, which  satisfies the following fact.
\begin{fact}\label{fact_for_multi_ind_cons}
The new construction and the base construction are defined as $t_N$ and $t_B$. Suppose the vertex set of $K_{2s+1}$ is $\{v_i: i\in[2s]\}\cup\{v_\infty\}$, and the vertex set of $K_{4s+1}$ is $\{(v_i, j): i\in [2s], j\in[2]\}\cup\{v_\infty\}$. For any vertex $v\ne v_\infty$ and $i\in[0, s-1]$, $|S(t_N, v)\cap[2si+1, 2s(i+1)]|=1$ and $|S(t_N, v)\cap[6s^2+2s(i+1)+1, 6s^2+2s(i+2)]|=1$. Furthermore, if $v=(v_i, j)$ for some $i\in [2s]$ and $j\in[2]$, and there exists an interval $[x, y]\subset[\epsilon_{2s+1}]$ such that $|S(t_B, v_i)\cap [x, y]|=r$ for some integer $r$, then $|S(t_N, v)\cap[2s^2+2x-1, 2s^2+2y]|=r$. All edges containing $v_\infty$ in $K_{4s+1}$ have $t_N$ values in $[2s^2+1, 6s^2+2s]$.
\end{fact}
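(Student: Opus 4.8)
The plan is to unwind the explicit definition of $t_N$ given in \cite[Lemma 6]{Colbourn2021} and to verify the three assertions by directly tracing which edges incident to a fixed vertex receive labels in each named interval. First I would fix the edge partition induced by the construction. The $8s^2+2s$ edges of $K_{4s+1}$ fall into three natural classes: the $4s^2$ \emph{crossing} edges $\{(v_i,1)(v_k,2):i,k\in[2s]\}$, which form a copy of $K_{2s,2s}$ (note the diagonal edges $(v_i,1)(v_i,2)$ are among them and have no base counterpart); the \emph{within-copy} edges $\{(v_i,j)(v_k,j):i\neq k,\ j\in[2]\}$, numbering $2\binom{2s}{2}=4s^2-2s$; and the $4s$ edges $\{(v_i,j)v_\infty\}$ at $v_\infty$. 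A count gives $4s^2+(4s^2-2s)+4s=8s^2+2s=\epsilon_{4s+1}$, so these classes partition $E(K_{4s+1})$. The construction assigns the crossing edges the $4s^2$ labels of $[1,2s^2]\cup[6s^2+2s+1,8s^2+2s]$, and assigns the remaining two classes the $4s^2+2s$ middle labels $[2s^2+1,6s^2+2s]$ through a \emph{stretching} rule: an edge whose base counterpart $e_B$ in $K_{2s+1}$ satisfies $t_B(e_B)=b$ receives one of the two consecutive labels $2s^2+2b-1$ or $2s^2+2b$, with the two copies of $e_B$ sharing this consecutive pair.

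With this structure in hand, the two "furthermore" assertions follow quickly. For the $v_\infty$ statement, every edge $(v_i,j)v_\infty$ is a stretch of the base edge $v_iv_\infty$, whose label $b=t_B(v_iv_\infty)$ lies in $[\epsilon_{2s+1}]=[1,2s^2+s]$; hence its label $2s^2+2b-1$ or $2s^2+2b$ lies in $[2s^2+1,6s^2+2s]$, which is exactly the middle interval. For the stretching statement, I would fix $v=(v_i,j)$ and an interval $[x,y]\subseteq[\epsilon_{2s+1}]$ and observe that the middle-labelled edges at $v$, namely its within-copy edges $(v_i,j)(v_k,j)$ with $k\neq i$ together with its edge $(v_i,j)v_\infty$, are in bijection with the $2s$ edges of $K_{2s+1}$ incident to $v_i$, each sent to its base counterpart. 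Since $x\le b\le y$ holds precisely when the single stretched label ($2s^2+2b-1$ or $2s^2+2b$) lies in $[2s^2+2x-1,2s^2+2y]$, the count of middle-labelled edges at $v$ in that interval equals the count of base edges at $v_i$ with label in $[x,y]$, which is $r$; this is the claimed equality $|S(t_N,v)\cap[2s^2+2x-1,2s^2+2y]|=r$.

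The remaining assertion, that for each $v\neq v_\infty$ and each $i\in[0,s-1]$ exactly one edge at $v$ lands in the low block $[2si+1,2s(i+1)]$ and exactly one in the high block $[6s^2+2s(i+1)+1,6s^2+2s(i+2)]$, is where the real work lies, and I expect it to be the main obstacle. Only the crossing edges at $v=(v_i,j)$ receive low or high labels, and there are exactly $2s$ of them, one toward each $(v_k,3-j)$ with $k\in[2s]$. The claim is that these $2s$ labels are distributed one per low block and one per high block, so that $s$ of the edges fall into the $s$ low blocks (one each) and the other $s$ into the $s$ high blocks (one each). I would extract the explicit crossing formula of \cite[Lemma 6]{Colbourn2021}, which assigns the $K_{2s,2s}$-labels by a modular, magic-square-type pattern, and show that the induced block index, governed by $\lfloor(t_N-1)/2s\rfloor$ on the low part and by the analogous quotient on the high part, is, as a function of $k$, a permutation of $\{0,\dots,s-1\}$ on each half of the range of $k$. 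This is a finite but delicate arithmetic verification tied to the exact crossing formula, and it is the step I would devote the most care to; I expect the hypothesis that $s$ is even to be used precisely here to keep the magic-square pattern balanced and the two quotient maps bijective.
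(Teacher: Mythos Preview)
Your proposal is correct in outline, but you should know that the paper itself does \emph{not} prove this statement at all: it is presented as a ``Fact'' summarizing structural properties of the construction in \cite[Lemma~6]{Colbourn2021}, with no argument given. The paper simply extracts from that reference the three features it needs downstream (block structure on the crossing labels, the stretching correspondence on the middle labels, and the location of the $v_\infty$-edges) and uses them as a black box in Proposition~\ref{propositioin_large_case}.

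What you have written is therefore strictly more than the paper provides, and your decomposition matches the actual construction: the $4s^2$ crossing edges take the extremal labels $[1,2s^2]\cup[6s^2+2s+1,8s^2+2s]$, while the $4s^2+2s$ within-copy and $v_\infty$-edges take the middle interval $[2s^2+1,6s^2+2s]$ via the doubling rule $b\mapsto\{2s^2+2b-1,2s^2+2b\}$. Your verification of the stretching claim and the $v_\infty$ claim is clean; the one place to be explicit is that the crossing labels are disjoint from $[2s^2+2x-1,2s^2+2y]$ for any $[x,y]\subset[\epsilon_{2s+1}]$, so only middle-labelled edges contribute to that count (you rely on this implicitly). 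The block-bijection step for the crossing edges is indeed the only substantive arithmetic, and you are right that the parity hypothesis on $s$ enters there; but since the paper treats the whole Fact as quoted from \cite{Colbourn2021}, carrying out that verification is supererogatory relative to what the paper does.
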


The facts above can derive the following proposition. From now on, we also use the labels of the facts for the corresponding constructions they refer to. For example, the additive inductive construction when $n\equiv 2\mod 4$ can be denoted by Fact~\ref{fact_for_add_induction} (3).

\begin{proposition}\label{propositioin_large_case}
Consider any supermagic labeling $t$ on $K_n$ constructed from a base supermagic labeling $t_B$ on $K_N$ by iterating inductive constructions from above.
\begin{itemize}
\item[(1)] If $n\equiv 2 \text{ (or $3$)}\mod 4$, the labeling $t$ is obtained from $t_B$ by using $\frac{n-N}{4}$ times of construction Fact~\ref{fact_for_add_induction} (3) (or Fact~\ref{fact_for_add_induction} (1), respectively). Let $\ell, m$ and $\gamma$ be nonnegative integers. If there exist $V_N\subset V(K_N)$ and $m$ disjoint $1$-APs $I_i\subset [\epsilon_N], i\in[m]$ with lengths no smaller than $\ell$, such that for any $v\in V_N$, $|S(t_B, v)\backslash (\cup_{i=1}^m I_i)|\le \gamma$ and $|S(t_B, v)\cap I_i|\le 2$ for any $i\in [m]$, then there exist $m'$ disjoint $1$-APs $I_j'\subset [\epsilon_n], j\in[m']$ with $m'=m+\frac{n-N}2$ and lengths no smaller than $\min\{\ell, 2N+2\}$, such that for any $v\in V_N$, $|S(t, v)\backslash (\cup_{j=1}^{m'} I_j')|\le \gamma$ and $|S(t, v)\cap I_j'|\le 2$ for any $j\in [m']$.

\item[(2)] If $n\equiv 1\mod 4$, then there exist some nonnegative integers $a$ and $b$ such that $t$ is obtained from $t_B$ by using in total $a$ times of construction Fact~\ref{fact_for_add_induction} (2) and $b$ times of construction Fact~\ref{fact_for_multi_ind_cons}.
    Let $\ell, m$ and $\gamma$ be nonnegative integers. If there exist $V_N\subset V(K_N)$ and $m$ disjoint $1$-APs $I_i\subset [\epsilon_N], i\in[m]$ with lengths no smaller than $\ell$, such that for any $v\in V_N$, $|S(t_B, v)\backslash (\cup_{i=1}^m I_i)|\le \gamma$ and $|S(t_B, v)\cap I_i|\le 2$ for any $i\in [m]$, then there exist some $m'\le m+2a+nb$, a vertex set $V_n\subset V(K_n)$ with $|V_n|\ge 2^b(|V_N|-b)$, and $m'$ disjoint $1$-APs $I_j'\subset [\epsilon_n], j\in[m']$ with lengths no smaller than $\min\{\ell, N-1\}$, such that for any $v\in V_n$, $|S(t, v)\backslash (\cup_{j=1}^{m'} I_j')|\le \gamma$ and $|S(t, v)\cap I_j'|\le 2$ for any $j\in [m']$.
\end{itemize}
\end{proposition}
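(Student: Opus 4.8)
The plan is to prove both parts by induction on the number of construction steps, tracking, at each step, how the family of $1$-APs, the uncovered budget $\gamma$, the common lower bound on the $1$-AP lengths, and (in Part~(2)) the set of ``good'' vertices transform. The base case is trivial: for $t=t_B$ on $K_N$ one takes the very $I_i$ supplied by the hypothesis, $V_n=V_N$, $m'=m$, and length bound $\ell$. The whole content is then a careful single-step analysis using Facts~\ref{fact_for_add_induction} and~\ref{fact_for_multi_ind_cons}, followed by composition.

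\textbf{Part (1).} I would first treat one application of the additive step of Fact~\ref{fact_for_add_induction}~(1) (the case $n\equiv 3\bmod 4$). There every old label is increased by the same constant, so each old $1$-AP $I_i$ becomes a disjoint $1$-AP of the same length, while the new edges fill exactly the bottom block and the top block of $[\epsilon_{N+4}]$, each a $1$-AP of length $2N+3\ge 2N+2$. By the ``exactly two members'' clause of Fact~\ref{fact_for_add_induction}~(1), every old vertex meets each of these two new blocks in exactly two edges; hence, for $v\in V_N$, the uncovered part of $S(t_N,v)$ is precisely the shifted uncovered part of $S(t_B,v)$, still of size $\le\gamma$, and $|S(t_N,v)\cap I|\le 2$ is inherited for old $I$ and equals $2$ for the new blocks. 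Thus one step does $(m,\ell,\gamma)\mapsto(m+2,\min\{\ell,2N+2\},\gamma)$ and preserves all hypotheses on the vertices of $V_N$; iterating it $\tfrac{n-N}{4}$ times yields $m'=m+\tfrac{n-N}{2}$ and the length bound $\min\{\ell,2N+2\}$ (every later step introduces blocks at least this long). The case $n\equiv 2\bmod 4$ (Fact~\ref{fact_for_add_induction}~(3)) is the same, with one bookkeeping caveat: the shift is now piecewise (different constants on the lower and higher sets, with the small astray part relocated into the centre interval, and the new blocks now of length $2N+2$). Since a weak $1$-astray good labeling meets the astray part in at most one edge per vertex, that edge is in any case part of the uncovered set, so we may assume from the outset that each $I_i$ lies inside the lower set or inside the higher set; then each $I_i$ is translated uniformly and the rest goes through verbatim.

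\textbf{Part (2).} Here $t$ is produced by interleaving, in some order, $a$ additive steps (Fact~\ref{fact_for_add_induction}~(2)) and $b$ multiplicative steps (Fact~\ref{fact_for_multi_ind_cons}); since the analysis is per step, the order is irrelevant. An additive step is handled exactly as in Part~(1): a uniform shift of the old labels, two new blocks of length $2M+3>N-1$ (where $M\ge N$ is the current order) each met twice by every old vertex, so $(m,\gamma)\mapsto(m+2,\gamma)$ with the good-vertex set unchanged. The new ingredient is a multiplicative step $K_{2s+1}\to K_{4s+1}$: each old vertex $v_i$ is replaced by $(v_i,1),(v_i,2)$, the vertex $v_\infty$ is dropped from the good set, and an old interval $[x,y]$ is stretched to the $1$-AP $[2s^2+2x-1,\,2s^2+2y]$ of twice the length, with $|S(t_N,(v_i,j))\cap[2s^2+2x-1,2s^2+2y]|=|S(t_B,v_i)\cap[x,y]|$. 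Taking $[x,y]=[z,z]$ shows each old value of $S(t_B,v_i)$ produces exactly one value of $S(t_N,(v_i,j))$ inside a length-$2$ window that is disjoint from the stretch of any interval not containing $z$; so an uncovered old value stays uncovered and a covered one stays inside the stretched copy of its $1$-AP, keeping $\gamma$ fixed and the ``$\le 2$'' bound intact. The remaining $2s$ edges at $(v_i,j)$ fall among the $2s$ new length-$2s$ blocks of Fact~\ref{fact_for_multi_ind_cons}, each met at most once, hence covered with intersection $\le 1$. Thus one multiplicative step sends $m\mapsto m+2s<m+n$, keeps $\gamma$, doubles all lengths (so $\ell$ survives and the new blocks have length $2s=M-1\ge N-1$), and replaces a good set of size $|V|$ by one of size $2(|V|-1)$. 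Composing $a$ additive and $b$ multiplicative steps gives $m'\le m+2a+nb$, $|V_n|\ge 2^b(|V_N|-b)$ (from iterating $|V|\mapsto 2(|V|-1)$, additive steps leaving it unchanged), and length bound $\min\{\ell,N-1\}$.

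The main obstacle is not any single estimate but the bookkeeping across the two ``non-uniform'' step types: in Part~(1) one must check that relocating the astray part and restricting the $1$-APs to a single block of the lower/higher decomposition does not enlarge $\gamma$, and in Part~(2) one must verify that the stretching map $[x,y]\mapsto[2s^2+2x-1,2s^2+2y]$ is simultaneously compatible with the ``$\le 2$'' intersection bound for \emph{every} derived vertex, and that exactly one vertex (namely $v_\infty$) is lost per multiplicative step, so that the product bound $2^b(|V_N|-b)$ is met. Once these per-step transformations are pinned down, all the quantitative claims follow by composition, using only that every intermediate complete graph has order at least $N$.
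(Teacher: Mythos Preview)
Your proposal is correct and follows essentially the same approach as the paper's own proof: both argue by induction along the tower $t_B=t_0\to t_1\to\cdots\to t$, analysing one additive or multiplicative step at a time and then composing. The paper in fact only writes out Part~(2) explicitly (declaring Part~(1) to be ``the same proving process''), and its per-step analysis---shifting the old $I_i$'s and appending two new boundary blocks in the additive case, stretching $[x,y]\mapsto[2s^2+2x-1,2s^2+2y]$ and appending $2s$ blocks of length $2s$ in the multiplicative case, with $V_{N_{i+1}}=\{(v,j):v\in V_{N_i}\setminus\{v_\infty\},\,j\in[2]\}$---matches yours line for line. Your $[x,y]=[z,z]$ observation to verify that covered/uncovered status is preserved under stretching is a nice touch the paper leaves implicit.

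One small caution on your handling of the $n\equiv 2\pmod 4$ case: the assertion ``that edge is in any case part of the uncovered set, so we may assume from the outset that each $I_i$ lies inside the lower set or inside the higher set'' is not quite a free normalisation. If some $I_i$ genuinely straddles the astray interval, simply discarding the astray portion could split $I_i$ or enlarge $\gamma$, which would spoil the exact count $m'=m+\tfrac{n-N}{2}$. The clean fix is instead to take $I_i'=[x+8k+6,\,y+8k+8]$ (extending by two to absorb the offset mismatch), which remains a single $1$-AP, stays disjoint from the other shifted intervals and from the two new boundary blocks, and still satisfies $|S(t_N,v)\cap I_i'|\le 2$. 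The paper does not spell this out either, so this is a shared loose end rather than a defect peculiar to your argument.
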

\begin{proof}
The first case can be easily derived by the same proving process of the second one, so we only give the proof when $n\equiv 1\mod 4$.

There exists a tower of induction relationship with $a+b$ steps from $t_B$ to $t$, say $t_B=t_0\to t_1\to t_2\cdots\to t_{a+b}=t$, such that $t_i$ is a supermagic labeling on $K_{N_i}$ derived from $t_{i-1}$ by one of the two constructions, Fact~\ref{fact_for_add_induction} (2) or Fact~\ref{fact_for_multi_ind_cons},      for any $i\in[a+b]$. So $N_0=N$, $N_{a+b}=n$, and $N_i=N_{i-1}+4$ or $2N_{i-1}-1$.

For any $i\in[0, a+b-1]$, if there exist disjoint $1$-APs $I_1^{(i)}, I_2^{(i)}, \ldots, I_{m_i}^{(i)}\subset [\epsilon_{N_i}]$ with the smallest length $\ell_i$ and a vertex set $V_{N_i}\subset V(K_{N_i})$ such that for any $v\in V_{N_i}$, $|S(t_i, v)\cap I_j^{(i)}|\le 2$ for any $j\in[m_i]$ and $(N_i-1)-|S(t_i, v)\cap(\cup_{j=1}^{m_i}I_j^{(i)})|\le \gamma$ for some  $\gamma$, then $t_{i+1}$ will follow one of the following two cases.

If $t_i\to t_{i+1}$ is by construction Fact~\ref{fact_for_add_induction} (2), $N_{i+1}=N_i+4$. Let $k=(N_i-1)\slash 4$ and define $m_{i+1}=m_i+2$. Consider $m_{i+1}$ disjoint $1$-APs $I_1^{(i+1)}, \ldots, I_{m_{i+1}}^{(i+1)}$ from $[\epsilon_{N_{i+1}}]$ as follows,
\begin{equation*}
I_j^{(i+1)}=
\begin{cases}
I_j^{(i)}+8k+5, &\text{ for all }j\in[m_i];\\
[8k+5], & \text{ when }j=m_i+1;\\
[\epsilon_{4k+5}-8k-4, \epsilon_{4k+5}], & \text{ when }j=m_i+2.
\end{cases}
\end{equation*}
So the smallest length $\ell_{i+1}=\min\{\ell_i, 8k+5\}$. Let $V_{N_{i+1}}=V_{N_i}$. By Fact~\ref{fact_for_add_induction} (2), any vertex $v\in V_{N_{i+1}}$ satisfies $|S(t_{i+1}, v)\cap I_j^{(i+1)}|\le 2$ for any $j\in[m_{i+1}]$ and $(N_{i+1}-1)-|S(t_{i+1}, v)\cap(\cup_{j=1}^{m_{i+1}}I_j^{(i+1)})|\le \gamma$.

If $t_i\to t_{i+1}$ is by construction Fact~\ref{fact_for_multi_ind_cons}, $N_{i+1}=2N_i-1$. Let $s=(N_i-1)\slash2$ and define $m_{i+1}=m_i+2s$. Consider $m_{i+1}$ disjoint $1$-APs $I_1^{(i+1)}, \ldots, I_{m_{i+1}}^{(i+1)}$ from $[\epsilon_{N_{i+1}}]$ as follows. For any $j\in[m_i]$, if $I_j^{(i)}=[x, y]$ for some integers $x$ and $y$, define $I_j^{(i+1)}=[2s^2+2x-1, 2s^2+2y]$. For the rest $2s$ intervals,
\begin{equation*}
I_{m_i+j}^{(i+1)}=
\begin{cases}
[2s(j-1)+1, 2sj], &\text{ when }j\in[s];\\
[4s^2+2sj+1, 4s^2+2s(j+1)], & \text{ when }j\in [s+1, 2s].
\end{cases}
\end{equation*}
So the smallest length $\ell_{i+1}=\min\{\ell_i, N_i-1 \}$. Let $V_{N_{i+1}}=\{(v, j):  v\in V_{N_i}\backslash\{v_\infty\}, j\in [2]\}$. By Fact~\ref{fact_for_multi_ind_cons}, any vertex $v\in V_{N_{i+1}}$ satisfies $|S(t_{i+1}, v)\cap I_j^{(i+1)}|\le 2$ for any $j\in[m_{i+1}]$ and $(N_{i+1}-1)-|S(t_{i+1}, v)\cap(\cup_{j=1}^{m_{i+1}}I_j^{(i+1)})|\le \gamma$. Hence $|V_{N_{i+1}}|\ge 2(|V_{N_i}|-1)$.

Do the analysis above from $t_i$ to $t_{i+1}$ for all $i\in [0, a+b-1]$ inductively, and we get the desired result.
\end{proof}

\begin{lemma}
 Given an admissible $p$ with $p=O(\sqrt n)$. From inductive and factorial constructions listed above we cannot obtain edge labelings with $p$-robustness ratio away from $1$.
\end{lemma}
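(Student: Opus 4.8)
The plan is to treat the factorial constructions and the inductive constructions separately, and in each case to produce, for the resulting labeling $t$ of $K_n$, a family of pairwise disjoint $1$-APs in $[\epsilon_n]$ each of length at least $4p$, together with a set of at least $\log n$ vertices whose label sets are covered up to a sublinear remainder by that family, with every $1$-AP meeting every such label set in at most two points; Lemma~\ref{vertex_link_lemma}, applied with an arbitrary integer constant $h$, then forces $r(p,t)=1$.

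For the factorial constructions the argument is essentially the discussion preceding this lemma. When $n\equiv 0,2\mod 4$ the $1$-factor decomposition of Fact~\ref{fact_for_factor_cons}(1) partitions $[\epsilon_n]$ into $n-1$ intervals each meeting every label set in exactly one point, so Lemma~\ref{slice_lemma} gives $r(p,t_f)=1$. When $n\equiv 3\mod 4$ the $2$-factor decomposition of Fact~\ref{fact_for_factor_cons}(2) partitions $[\epsilon_n]$ into $1$-APs of length $n>4p$ each meeting every label set in exactly two points, so Corollary~\ref{link_corollary} gives $r(p^*,t_f)\ge 1-\frac{3}{2h}$ for every constant $h$, whence $r(p,t_f)=1$.

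For the inductive constructions I would not feed Proposition~\ref{propositioin_large_case} the original base $K_N$, whose size is constant and for which the returned vertex set is too small. Instead I would re-root the tower of inductive steps at the first labeling $t_j$, on $K_{N_j}$, with $N_j\ge\max\{\lceil\sqrt n\rceil,4p+1\}$. Since consecutive orders in the tower grow by an additive $4$ (the additive steps of Fact~\ref{fact_for_add_induction}) or by a multiplicative factor below $2$ (the step of Fact~\ref{fact_for_multi_ind_cons}), and $p=O(\sqrt n)$, such a $j$ has $N_j=\Theta(\sqrt n)$; moreover $t_j$ is still supermagic (weak $1$-astray good when $n\equiv 2\mod 4$), hence a legitimate base for the remaining steps. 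Applying Proposition~\ref{propositioin_large_case} to the sub-tower from $t_j$ to $t$ with the trivial input $V_{N_j}=V(K_{N_j})$, $m=0$, $\gamma=N_j-1$ produces at most $O(n\log n)$ pairwise disjoint $1$-APs in $[\epsilon_n]$, each of length at least $N_j-1$ when $n\equiv 1\mod 4$ and at least $2N_j+2$ when $n\equiv 2,3\mod 4$, hence at least $4p$ in every case; a vertex set $V_n$; and the guarantee that for every $v\in V_n$ each of these $1$-APs meets $S(t,v)$ in at most two points while all but at most $\gamma=N_j-1=O(\sqrt n)$ of the elements of $S(t,v)$ lie in their union. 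To see $|V_n|\ge\log n$: when $n\equiv 2,3\mod 4$ the sub-tower is purely additive and the proposition keeps $V_n=V(K_{N_j})$, so $|V_n|=N_j\ge\sqrt n$; when $n\equiv 1\mod 4$ the number $b'$ of multiplicative steps left is $O(\log n)$, so $b'<N_j=\Theta(\sqrt n)$ and $|V_n|\ge 2^{b'}(N_j-b')\ge\log n$ for all large $n$. Hence Lemma~\ref{vertex_link_lemma} applies with $\varepsilon$ a fixed constant absorbing the $O(\sqrt n)$ remainder and $h$ arbitrary, giving $r(p,t)\ge r(p^*,t)\ge 1-\frac{3+\varepsilon^2}{2h}$ for every constant $h$, and therefore $r(p,t)=1$.

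The main obstacle is the calibration of the re-rooting order $N_j$. Lemma~\ref{vertex_link_lemma} tolerates only $O(\sqrt n)$ label-set elements outside the chosen long $1$-APs, with a universal constant $\varepsilon$, which is precisely where the hypothesis $p=O(\sqrt n)$ is used: $N_j$ must be large enough that every $1$-AP manufactured in the sub-tower already exceeds $4p$ and that the number of multiplicative steps still to come is smaller than $N_j$ (so the proposition's vertex bound is non-vacuous), yet small enough that the discarded central block of $N_j-1$ labels stays $O(\sqrt n)$; pinning $N_j$ to order $\sqrt n$ does both at once in all three residue classes. The remaining bookkeeping is to confirm that this intermediate $t_j$ genuinely satisfies the structural hypotheses invoked by Fact~\ref{fact_for_add_induction} and Fact~\ref{fact_for_multi_ind_cons} — in particular the weak $1$-astray goodness required along the chain of Fact~\ref{fact_for_add_induction}(3) when $n\equiv 2\mod 4$ — and that the target order $\Theta(\sqrt n)$ is actually attained in each residue class.
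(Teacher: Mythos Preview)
Your overall strategy---re-rooting the inductive tower at an intermediate level $N_j$ of order roughly $\sqrt n$, feeding the trivial data $m=0$, $\gamma=N_j-1$ into Proposition~\ref{propositioin_large_case}, and then invoking Lemma~\ref{vertex_link_lemma} with $h$ arbitrary---is exactly the paper's approach, and your threshold $\max\{\lceil\sqrt n\rceil,4p+1\}$ works just as well as the paper's $\max\{8p,\log n\}$ (resp.\ $\max\{2\log n,10p\}$).

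There is, however, a genuine gap in the case analysis. You assert that the base $K_N$ of an inductive tower ``has constant size,'' and your claim $N_j=\Theta(\sqrt n)$ depends on this. For $n\equiv 1\pmod 4$ this is fine, since no factorial construction exists in that residue class. But for $n\equiv 2,3\pmod 4$ the base $t_B$ may itself be a \emph{factorial} construction on $K_N$ with $N$ arbitrarily large---for instance, a factorial labeling on $K_{n-4}$ followed by a single step of Fact~\ref{fact_for_add_induction}. In that situation your re-rooting yields $N_j=N$, the remainder $\gamma=N_j-1$ is no longer $O(\sqrt n)$, and Lemma~\ref{vertex_link_lemma} does not apply. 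This is not mere bookkeeping about ``whether $\Theta(\sqrt n)$ is attained'': the trivial input genuinely fails here, and a different argument is required.

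The paper handles this with an explicit case split on the size of $N$. When $N$ already exceeds the threshold, the only known direct constructions of that order are the factorial ones, so $t_B$ carries the $1$-factor or $2$-factor structure of Fact~\ref{fact_for_factor_cons}. One then feeds that structure (so $m=N-1$ or $(N-1)/2$, lengths $\ge N/2$, and $\gamma=0$) as the \emph{non-trivial} input to Proposition~\ref{propositioin_large_case}; the output still has $\gamma=0$ and intervals of length at least $N/2\ge 4p$, and Lemma~\ref{vertex_link_lemma} applies with $\varepsilon=0$ and $T=V(K_N)$. Adding this branch to your argument completes it.
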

\begin{proof}
We only prove the cases when $n\equiv 2\mod 4$ and $n\equiv 1\mod 4$. The $n\equiv 3\mod 4$ case is much the same as the first case and we omit the proof.

Since $p=O(\sqrt n)$, there exists some constant $C$ such that $p<C\sqrt n$. We always choose $\varepsilon=20C$ when applying Lemma~\ref{vertex_link_lemma}.

When $n\equiv 2\mod 4$, suppose we have a construction of a supermagic $t$ on $K_n$. Then by Proposition ~\ref{propositioin_large_case} (1), the labeling $t$ is obtained from some $t_B$ on $K_N$ for some $N \equiv 2\mod 4$ by using $\frac{n-N}4$ times of Fact~\ref{fact_for_add_induction} (3).

If such $N\ge\max\{8p, \log n\}$, then for the reason that all the known direct constructions of supermagic labelings, except the factorial constructions, are only for small order $N$, $t_B$ must be a factorial construction on $K_N$ in the form of Fact~\ref{fact_for_factor_cons} (1). Thus, there exist $(N-1)$ disjoint intervals from $[\epsilon_N]$ with lengths $\frac N 2$ such that the preimages of those intervals form a matching decomposition of $K_N$. Then by Proposition~\ref{propositioin_large_case} (1) for taking $\ell=\frac N 2$, $m=N-1$ and $\gamma=0$, there exist $m'=m+\frac{n-N}2$ disjoint $1$-APs $I_1', \ldots, I_{m'}'$ with lengths no smaller than $\frac N 2$, such that for any $v\in V(K_N)$, $S(t, v)\subset \cup_{j=1}^{m'}I_j'$ and $|S(t, v)\cap I_j'|\le 2$ for any $j\in [m']$. By Lemma~\ref{vertex_link_lemma} with $T=V(K_N)$, $r(p, t)\ge 1-\frac {3+\epsilon^2} {2h}$ for any fixed positive integer $h$, which means $r(p, t)=1-o(1)$. In other words, $t$ is of bad $p$-robustness ratio.

If $N<\max\{8p, \log n\}$, then consider the induction relationship tower $t_B=t_0\to t_1\to t_2\cdots\to t_{(n-N)\slash 4}=t$. In this tower,  $t_i$ is supermagic on $K_{N+4i}$ for any $i\in[0, (n-N)\slash 4]$. Denote $j=\min\{i\in [0, (n-N)\slash 4]: N+4i\ge \max\{8p, \log n\}\}$, and denote $t'=t_j$ as the new base case on $K_{n'}$ with $n'=N+4j$. Consider the process from $t'$ to $t$. Apply Proposition~\ref{propositioin_large_case} (1) by letting $\ell=\infty$, $m=0$ and $\gamma=n'-1<\varepsilon\sqrt n$.
Then there exist $m'=\frac{n-n'}2$ disjoint $1$-APs $I_1', \ldots, I_{m'}'\subset [\epsilon_n]$ with lengths no smaller than $2n'+2$, such that for any $v\in V(K_{n'})$, $|S(t, v)\backslash (\cup_{j=1}^{m'}I_j')|\le \gamma$ and $|S(t, v)\cap I_j'|\le 2$ for any $j\in [m']$. Then we can apply Lemma~\ref{vertex_link_lemma} with $T=V(K_{n'})$, and again get that  $r(p, t)=1-o(1)$.

When $n\equiv 1\mod 4$, suppose we have a construction of a supermagic $t$ on $K_n$. Then by Proposition ~\ref{propositioin_large_case} (2), the labeling $t$ is from some directly constructed base case $t_B$ on $K_N$ by inductively applying in total $a$ times of Fact~\ref{fact_for_add_induction} (2) and $b$ times of Fact~\ref{fact_for_multi_ind_cons}. So $b\le log(\frac n N)\le log(n)$.
Notice that we do not have factorial constructions when $n\equiv 1\mod 4$, so $N$ must be small. More specially, $N$ should be smaller than $\max\{2log(n), 10p\}$.

Consider the induction relationship tower $t_B=t_0\to t_1\to t_2\cdots\to t_{a+b}=t$, in which $t_i$ is supermagic on $N_i$ for any $i\in[0, a+b]$ with $N_i=N_{i-1}+4$ or $N_i=2N_{i-1}-1$. Choose the labeling $t'=t_j$ for some $j=\min\{i\in [0, a+b]: N_j\ge \max\{2log(n), 10p\}\}$.
Denote such $N_j$ as $N'$. So $N'<\max\{ 4log(n), 20p\}$. In $t'$, there exists ZERO $1$-APs with $\gamma=N'<20C\sqrt n=\varepsilon \sqrt n$ and $V_{N'}=V(K_{N'})$ such that for any $v\in V_{N'}$, $(N'-1)-|S(t', v)\cap\emptyset|\le \gamma$. Take $t'$ as the new base case and Proposition~\ref{propositioin_large_case} (2) tells us that there exist disjoint $1$-APs $I_1', \ldots, I_{m'}'\subset [\epsilon_n]$ with lengths larger than $4p$, and a new set $V_n\subset V(K_n)$ with $|V_n|\ge log(n)$, such that for any $v\in V_n$, $|S(t, v)\cap I_j'|\le 2$ for any $j\in [m']$ and $(n-1)-|S(t, v)\cap(I_1'\cup\cdots\cup I_{m'}')|\le\gamma$. By Lemma~\ref{vertex_link_lemma}, such a $t$ is also a bad labeling with $p$-robustness ratio $1-o(1)$.
\end{proof}

\bibliographystyle{IEEEtran}
\bibliography{FRC}
\vspace{12pt}

\end{document}